%\documentclass[pra,onecolumn, preprintnumbers,amsmath,amssymb, 
%nofootinbib]{revtex4}
\documentclass[11pt]{article}
\usepackage{cite}
\usepackage{subfigure}
\usepackage{graphicx, color, graphpap}
\usepackage{amsmath,amssymb,amsthm}
\usepackage{ifthen}
\usepackage{mathrsfs}
\usepackage{mathtools}
\usepackage{enumitem}
\usepackage{physics}
\usepackage{pstricks}
\usepackage{lineno}
\usepackage{float}
\usepackage{algorithmic}
\usepackage{fancyhdr}
\usepackage[us,12hr]{datetime} 
\usepackage{exscale}
\usepackage{tabularx}
\usepackage{latexsym}
\usepackage{fullpage}       
\usepackage{float}
\usepackage{verbatim}
\usepackage{multirow}
\usepackage{overpic}
\usepackage{comment} % writing comments

\usepackage{booktabs}
\usepackage{makeidx}
\usepackage{algorithm,algorithmic}
\usepackage{mathrsfs}% calligraph letters y
\usepackage{kbordermatrix}

\numberwithin{equation}{section}  % enables section numbering
\numberwithin{table}{section}
\numberwithin{algorithm}{section}
\makeindex

\usepackage{csvsimple}

\usepackage[pagebackref=true]{hyperref}
% with basic options
% N.B. pagebackref=true provides links back from the References to the 
%body text. This can cause trouble for printing.
\hypersetup{
plainpages=false,       % needed if Roman numbers in frontpages
% pdfpagelabels=true,     % adds page number as label in Acrobat's page
%%count
% bookmarks=true,         % show bookmarks bar?
unicode=false,          % non-Latin characters in Acrobat bookmarks
pdftoolbar=true,        % show Acrobat toolbar?
pdfmenubar=true,        % show Acrobat menu?
pdffitwindow=false,     % window fit to page when opened
pdfstartview={FitH},    % fits the width of the page to the window
pdftitle={Nearest DS Matrix:
Nearest Doubly Stochastic Matrix
},    % title: 
%CHANGE THIS TEXT!
%    pdfauthor={Author},    % author: CHANGE THIS TEXT! and uncomment 
%%this line
%    pdfsubject={Subject},  % subject: CHANGE THIS TEXT! and uncomment 
%%this line
%    pdfkeywords={keyword1} {key2} {key3}, % list of keywords, and 
%%uncomment this line if desired
pdfnewwindow=true,      % links in new window
colorlinks=true,        % false: boxed links; true: colored links
linkcolor=blue,         % color of internal links
citecolor=green,        % color of links to bibliography
filecolor=magenta,      % color of file links
urlcolor=cyan           % color of external links
}

\usepackage[noabbrev]{cleveref}
\usepackage{refcheck}
%%% Infrastructure begin: This is to make refcheck work with cleveref. 
%%%Comment and uncomment together with refcheck.
\makeatletter
\makeatother
%%%

%%% Now we add the reference commands we want refcheck to be aware of
%\refcheckize{\cref}
%\refcheckize{\Cref}
%%% Infrastructure end.

\DeclareMathOperator{\argmin}{argmin}

\usepackage{mathtools}

\usepackage{refcount} % for counting eqtn number in enumerate enviornment

\def\R{\mathbb{R}}
\def\Sc{\mathbb{S}}

\def\Rnt{\mathbb{R}^{n^2}}
\def\Rntp{\mathbb{R}_+^{n^2}}
\def\Rntm{\mathbb{R}^{2n-1}}

\def\Rnt{\mathbb{R}^{n^2}}

\def\Rnp{\mathbb{R}_+^n}

\def\eqref#1{{\normalfont(\ref{#1})}}

% Henry's additional definition

%%%%%%%%%%%%%%%%%%%%%%%%%%%%%%%%%
% All refs in roman.
\def\eqref#1{{\normalfont(\ref{#1})}}
%%%%%%%%%%%%%%%%%%%%%%%%%%%%%%%%%

%\newtheorem{nmbrs}{Numbering}[section]
%\newtheorem{defi}[nmbrs]{Definition}
\newtheorem{theorem}{Theorem}[section]

\newtheorem{definition}[theorem]{Definition}

\newtheorem{prop}[theorem]{Proposition}

\newtheorem{corollary}[theorem]{Corollary}

\newtheorem{remark}[theorem]{Remark}

\newtheorem{lemma}[theorem]{Lemma}

\crefname{thm}{Theorem}{Theorems}
\Crefname{thm}{Theorem}{Theorems}
\crefname{problem}{Problem}{Theorems}
\Crefname{problem}{Problem}{Theorems}
\Crefname{assump}{Assumption}{Theorems}
\crefname{assump}{Assumption}{Theorems}
\crefname{conjecture}{Conjecture}{Theorems}
\Crefname{conjecture}{Conjecture}{Theorems}
\crefname{prop}{Proposition}{Propositions}
\Crefname{prop}{Proposition}{Propositions}
\crefname{cor}{Corollary}{Corollaries}
\Crefname{cor}{Corollary}{Corollaries}
\crefname{lem}{Lemma}{Lemmas}
\Crefname{lem}{Lemma}{Lemmas}
\theoremstyle{definition}
\crefname{defn}{definition}{definitions}
\Crefname{defn}{Definition}{Definitions}
\crefname{conj}{Conjecture}{Conjectures}
\Crefname{conj}{Conjecture}{Conjectures}
\crefname{remark}{Remark}{Remarks}
\Crefname{remark}{Remark}{Remarks}
\crefname{rmk}{Remark}{Remarks}
\Crefname{rmk}{Remark}{Remarks}
\crefname{example}{Example}{Examples}
\Crefname{example}{Example}{Examples}
\crefname{align}{}{}
\Crefname{align}{}{}
\crefname{equation}{}{}
\Crefname{equation}{}{}

%\newcounter{count}

%\newcommand{\cbr}[1]{\left\{ #1 \right\}}
%\newcommand{\rbr}[1]{\left( #1 \right)}
%\newcommand{\sbr}[1]{\left[ #1 \right]}
\newcommand{\textdef}[1]{\textit{#1}\index{#1}}

\newcommand{\cV}{{\mathcal V} }

%\newcommand{\Ss}{{\mathcal S} }

%\newcommand{\ip}[2]{\left\langle #1, #2 \right\rangle}

%\newcommand{\norm}[1]{\left\| #1 \right\|}
 % changed by TK

\newcommand{\DSp}{\textbf{DS}}

\newcommand{\cM}{{\mathcal M}}

\newcommand{\A}{{\mathcal A}}

\newcommand{\bbm}{\begin{bmatrix}}
\newcommand{\ebm}{\end{bmatrix}}
\newcommand{\bem}{\begin{pmatrix}}
\newcommand{\eem}{\end{pmatrix}}
\newcommand{\beq}{\begin{equation}}
\newcommand{\beqs}{\begin{equation*}}
\newcommand{\bet}{\begin{table}}
\newcommand{\eeq}{\end{equation}}
\newcommand{\eeqs}{\end{equation*}}
\newcommand{\beqr}{\begin{eqnarray}}
% \newcommand{\addc}{\addtocounter{count}{1} }
% newcommand{\bs}{\setcounter{count}{0} \section}
%\newcommand{\bs}{\section}

\renewcommand{\vec}{{\rm vec}}

% smallest face containing a set
% smallest face containing a set

%\newcommand{\epreq}{\hfill $\blacksquare$}

\DeclareMathOperator{\range}{range}

\DeclareMathOperator{\dist}{dist}
\DeclareMathOperator{\ext}{ext}
\DeclareMathOperator{\kvec}{{vec}}

\DeclareMathOperator{\Blkdiag}{{Blkdiag}}

\DeclareMathOperator{\Diag}{{Diag}}

\DeclareMathOperator{\Mat}{{Mat}}

\DeclareMathOperator{\conv}{{conv}}
%\DeclareMathOperator{\real}{{\Re}}

%\DeclareMathOperator{\odiag}{{$o\circ$ diag}}
%\DeclareMathOperator{\bdiag}{{b$^\circ$diag}}

%   for Real number sign
%\newcommand{\eqref}[1]{{\rm (\ref{#1})}}

\newcommand{\nc}{\newcommand}
\nc{\arrow}{{\rm arrow\,}}
\nc{\Arrow}{{\rm Arrow\,}}
\nc{\BoDiag}{{\rm B^0Diag\,}}
\nc{\bodiag}{{\rm b^0diag\,}}

\nc{\Mm}{{\mathcal M}^{m} }
\nc{\Mmn}{{\mathcal M}^{mn} }
\nc{\Mnr}{{\mathcal M}_{nr} }
\nc{\Mnmr}{{\mathcal M}_{(n-1)r} }
% \nc{\eqref}[1]{{\rm (\ref{#1})}}
\nc{\kwqqp}{Q{$^2$}P\,}
\nc{\kwqqps}{Q{$^2$}Ps}

\nc{\notinaho}{(X,S)\in \overline{AHO}(\A)}
\nc{\inaho}{(X,S)\in AHO(\A)}

% Simon's definitions
\newcommand{\bea}{\begin{eqnarray}}%
\newcommand{\eea}{\end{eqnarray}}%
\newcommand{\beas}{\begin{eqnarray*}}%
\newcommand{\eeas}{\end{eqnarray*}}%
\newcommand{\Rmn}{\R^{m \times n}}%
\newcommand{\Rnn}{\R^{n \times n}}%
%
% interior of a set
% cone generated by a set

% cone generated by a set
% preclosure of a set
%\newcommand{\bdry}{{\rm bdry}}% boundary of a set
%\newcommand{\cl}{{\rm cl}}% closure of a set
% boundary of a set
% relative boundary of a set
%
%\newcommand{\refl}{{\rm refl}}%
%
%
%\newcommand{\Range}{\RR}%
%
%
% 
% 
% 
% 
%
%
%
%
%
%
%
%
%
%
% Produce array delimited by []. Usage: \arr{ccc}{B&-A^T&C\\I&0&A}
{}
% Produce small array (within text)
% e.g.,~$\bigl[\begin{smallmatrix} A&C\\0&-I \end{smallmatrix}\bigr]$

% Produce *nondelimited* small array underneath a min or max 
% e.g.,~$\begin{smallmatrix} x \in X\\ y \in Y \end{smallmatrix}$

% Relabel list environments.
% undo this relabel - by Henry????? - comment out the next four lines????
%\makeatletter
%\renewcommand{\theenumi}{\rm (\alph{enumi})}
%\renewcommand{\p@enumii}{\theenumi}
%\makeatother

%%from Vris
\newcommand{\Hnp}[1][]{\,\mathbb{H}_+^{\ifthenelse{\equal{#1}{}}{n}{#1}}}
\newcommand{\Hn}[1][]{\,\mathbb{H}^{\ifthenelse{\equal{#1}{}}{n}{#1}}}
\newcommand{\Hk}[1][]{\,\mathbb{H}^{\ifthenelse{\equal{#1}{}}{k}{#1}}}
\newcommand{\Dn}[1][]{\,\mathbb{D}^{\ifthenelse{\equal{#1}{}}{n}{#1}}}

 % \seq{x}{n} gives x^{(n)}

%\newcommand{\pmat}[1]{\begin{pmatrix} #1\end{pmatrix}}

%for HugoHenryStefan FacialReduction

% Naomi's definitions

%%%%%%%%%%Henry's definitions
%%Theorems, Lemmas, etc.
%\newtheoremstyle{example}{\topsep}{\topsep}%
%{}%         Body font
%{}%         Indent amount (empty = no indent, \parindent = para indent)
%{\bfseries}% Thm head font
%{:}%        Punctuation after thm head
%{   }%     Space after thm head (\newline = linebreak)
%{\thmname{#1}\thmnumber{ #2}}%\thmnote{ #3}}%         Thm head spec
%\theoremstyle{example}
%\newtheorem{example}{Example}%[subsection]
%
%\newtheorem{theorem}{Theorem}
%\newtheorem{lemma}[theorem]{Lemma}
%\newtheorem{corollary}[theorem]{Corollary}
%\newtheorem{conjecture}[theorem]{Conjecture}
%\newtheorem{proposition}[theorem]{Proposition}
%\newtheorem{claim}{Claim}
%
%\theoremstyle{definition}
%\newtheorem{definition}[theorem]{Definition}
%\newtheorem{remark}{Remark}

%\linenumbers

\begin{document}
\title{
%				Computing the Nearest Doubly Stochastic Matrix by a Newton-type Method
%			A Quadratically Convergent Newton Method for the Nearest Doubly Stochastic Matrix problem
A Semismooth Newton-Type Method for the Nearest Doubly Stochastic Matrix Problem
} %without Local Error Bound Condition 
\author{
%\thanks{
%Department of Combinatorics and Optimization
%Faculty of Mathematics, University of Waterloo, Waterloo, Ontario, Canada N2L 3G1; Research supported by The Natural Sciences and Engineering Research Council of Canada.
%}
\and
\href{https://huhao.org/}{Hao Hu}
\thanks{
Department of Combinatorics and Optimization
        Faculty of Mathematics, University of Waterloo, Waterloo, Ontario, Canada N2L 3G1; Research supported by The Natural Sciences and Engineering Research Council of Canada; \url{www.huhao.org}.
}
\and
\href{https://uwaterloo.ca/combinatorics-and-optimization/about/people/n6graham}
{Haesol Im}
\thanks{
Department of Combinatorics and Optimization
        Faculty of Mathematics, University of Waterloo, Waterloo, Ontario, Canada N2L 3G1; Research supported by The Natural Sciences and Engineering Research Council of Canada.
}
\and
\href{}{Xinxin
Li}\thanks{School of Mathematics, Jilin University, Changchun, China. 
E-mail: 
{\tt xinxinli@jlu.edu.cn}. This work was supported by the National Natural 
Science Foundation of China (No.11601183) and  Natural Science Foundation 
for Young Scientist of Jilin Province (No. 20180520212JH).}
\and
\href{http://www.math.uwaterloo.ca/~hwolkowi/}
{Henry Wolkowicz}%
        \thanks{Department of Combinatorics and Optimization
        Faculty of Mathematics, University of Waterloo, Waterloo, Ontario, Canada N2L 3G1; Research supported by The Natural Sciences and Engineering Research Council of Canada; 
\url{www.math.uwaterloo.ca/\~hwolkowi}.
}
}

%\email{jie.lin@uwaterloo.ca}
%%\affiliation{Institute for Quantum Computing and Department of Physics and Astronomy, University of Waterloo, N2L3G1 Waterloo, Ontario, Canada}

\break
\date{\currenttime, \today \\
Department of Combinatorics and Optimization\\
Faculty of Mathematics, University of Waterloo, Canada. \\
Research supported by NSERC.
}
\maketitle
\tableofcontents
\listoffigures
\listoftables
\listofalgorithms

\begin{abstract}
We study a semismooth Newton-type method for the 
nearest doubly stochastic matrix problem where both differentiability
and nonsingularity of the Jacobian can fail.
The optimality conditions for this problem are
formulated as a system of strongly semismooth functions. 
We show that the so-called local error bound condition does not hold for
this system. Thus the guaranteed convergence rate of Newton-type methods is at most superlinear. By exploiting the problem structure, we construct a 
modified two step semismooth Newton method that guarantees a nonsingular
Jacobian matrix at each iteration, and that converges to the nearest doubly stochastic matrix quadratically. To the best of our knowledge, this is 
the first Newton-type method which converges $Q$-quadratically in the absence of the local error bound condition.
\end{abstract}

{\bf Key Words:}  
nearest doubly stochastic matrix, semismooth newton method, strongly
semismooth, quadratic convergence, equivalence class.

\medskip
%{\bf AMS Subject Classification:}

\section{Introduction}
Newton's method is a powerful, popular iterative technique for solving
systems of nonlinear equations. The popularity arises from its fast 
asymptotic convergence rate. But this fast convergence requires
assumptions such as: nonsingularity of the Jacobian matrix at
the solution,  or the so-called \emph{local error bound condition},
see~\Cref{def:localerrorbnd} below, and
e.g.,~\cite{fernandez2017newton,dan2002convergence,qi1993nonsmooth,mordukhovich2006variational,clarke1990optimization}. 
These assumptions unfortunately can fail for many interesting applications. 
Recent extensions when nonsingularity fails in the differentiable case
appears in e.g.,~\cite{doi:10.1080/10556789308805545,HUESO200977} and the references therein.
In this paper, we present a two-step semismooth Newton-type algorithm 
for the nearest doubly stochastic matrix problem. 
We illustrate that it is still possible to achieve a Q-quadratic
convergence rate even if
the above assumptions fail. To our knowledge this is the
first Newton-type method to have a provable Q-quadratic convergence rate without
the local error bound condition. 
We include empirical evidence that illustrates the improved speed and
accuracy of our algorithm compared to several other methods in the
literature.

The proposed algorithm is also closely
	related to the recent developments for solving semidefinite programming
	relaxations using alternating direction method of multipliers (ADMM), 
	see~\cite{BW80,BoydParikhChuPeleatoEckstein:11}. The ADMM is recently proven to be a powerful method
	for solving facially reduced semidefinite programs.
	It is currently the most efficient technique to approximately solve the
	semidefinite relaxations of various hard combinatorial problems, see
	\cite{GHILW:20,OliveiraWolkXu:15,hu2020solving,HSW:19}. For example, the nearest doubly stochastic matrix problem can serve as a subproblem in solving certain relaxations for the quadratic assignment problem, e.g., \cite{li2020efficient,GHILW:20}. An efficient algorithm for solving this subproblem is the key to push the computational limit further. The algorithm presented in this paper is efficient and robust, which serves this purpose.

\subsection{Preliminaries}
A doubly stochastic matrix is a nonnegative square matrix $X \in
\Rnn$ whose rows and columns sum to one. Doubly stochastic
matrices have many applications for example in economics, probability
and statistics, quantum mechanics, communication theory and operation
research, e.g.,~\cite{brualdi1988some,louck1997doubly,marcus1960some}. 
The nearest doubly stochastic matrix, but with a prescribed entry, has
been studied in~\cite{bai2007computing}; it is related to the numerical 
simulation of large circuit networks.

\index{data, $\hat{X}$}
\index{$\hat{X}$, data}

Throughout this paper we assume we are
given a matrix $\hat{X} \in \R^{n\times n}$. The problem of computing its nearest doubly stochastic matrix is formally given by
\begin{equation}
\label{dsm:main}
\begin{array}{rl}
\min & \|X-\hat{X}\|^{2} \\
\text{s.t.} & Xe = e, \\
& X^Te = e, \\
& X\geq 0,
\end{array}
\end{equation}
where $\|\cdot\|$ is the Frobenius norm, and $e \in \R^{n}$ is the all-ones vector.
Here, the column sum constraints appear first. Moreover, the constraints
can be viewed within the family of \emph{network flow problems} as they
define the \emph{assignment problem} constraints, e.g.,~\cite[Chap. 7]{BT97}. 

\index{Frobenius norm, $\|X\|$}
\index{$\|X\|$, Frobenius norm}

\subsubsection{A Vectorized Formulation and Optimality Conditions}

\index{\DSp, doubly stochastic}
\index{doubly stochastic, \DSp}
\index{$e$, all-ones vector}
\index{all-ones vector, $e$}
\index{$I$, identity matrix}
\index{identity matrix, $I$}

The nearest doubly stochastic matrix problem \cref{dsm:main} is defined using
the matrix variable $X\in \Rnn$. It is often more convenient to work with vectors, and therefore we shall derive an equivalent formulation using a vector of variables $x \in \R^{n^{2}}$.

\index{Kronecker product, $\otimes$}
\index{$\otimes$, Kronecker product}

\index{all-ones vector!$e$}
\index{all-ones vector!$\bar{e}\in \R^{2n}$}
\index{$\kvec$}		
\index{$\Mat$}

Let $x = \kvec(X)\in \R^{n^2}$ denote the vector obtained by stacking the
columns of $X\in \Rnn$. Conversely, $X = \Mat(x)\in \Rnn$ is the unique
matrix such that $x = \kvec(X)$. Recall that the matrix equation $AXB =
C$ can be written as $(B^{T} \otimes A)\kvec(X) = \kvec(C)$, where 
$\otimes$ denotes the Kronecker product. Therefore,
the equality constraints in \cref{dsm:main} are equivalent to
$$
(I \otimes e^{T})x = e \text{ and } (e^{T} \otimes I)x = e. 
$$
Thus we can express the feasible region of \cref{dsm:main} in vector
form as the set $\{x \in \R^{n^{2}} :  \bar A x = \bar{e}, x\geq 0\}$, where $\bar{e} \in \R^{2n}$ is the all-ones vector and the matrix $\bar{A}$ is
\begin{equation}\label{fullA}
\bar A =\begin{bmatrix}
I \otimes e^{T} \cr e^{T} \otimes I
\end{bmatrix} = 
\begin{bmatrix}
e^T & 0 & 0 \cr
0 & \ddots & 0 \cr
0 & 0 & e^T  \cr
I & \cdots & I
\end{bmatrix} \in \R^{2n \times n^{2}}.
\end{equation}

\index{$\bar A$}

It is easy to see that one of the equality constraints is redundant.
Therefore, we discard the last row in $\bar{A}$, i.e., the $2n$-th
constraint that the last row of $X$ sums to one. 
We denote this by $A$. We observe that the
matrix with all elements $1/n$ is strictly feasible. Therefore, we now
have that the Mangasarian-Fromovitz constraint qualification, MFCQ,
holds. This further means that the set of optimal dual variables is
compact, \cite{MR34:7263,MR0489903}.

Let \textdef{$\hat{x} = \kvec(\hat{X})$}. The doubly stochastic matrix
problem \cref{dsm:main} in the vector form is given by the unique
minimum of the strictly convex minimization problem
\begin{equation}
\label{eq:mainprob}
x^* = \argmin \left\{\frac 12 \|x-\hat x\|^2 \,:\,
Ax=b, \, x\geq 0\right\},
\end{equation}
where $A \in \R^{(2n-1) \times n^{2}}$ is the first $2n-1$ rows of $\bar{A}$,
and $b \in \R^{2n-1}$ is the all-ones vector. The optimal doubly stochastic 
matrix is denoted by $X^{*} = \Mat (x^{*})$. By abuse of notation, where
needed we often use double indices for the vectors 
\textdef{$x = (x_{ij}) \in \R^{n^2}$}.\footnote{The 
perturbation function (optimal value function) is
$p^*(\epsilon) = \min \left\{\frac 12 \|x-\hat x\|^2 \,:\,
Ax=b+\epsilon, \, x\geq 0\right\}$.
Then $\partial p^*(0)=\{y\}$ is the set of optimal dual multipliers, which is
always a compact, convex set since MFCQ holds. So differentiability
holds if, and only if, it is a
singleton.}

\index{$A$, constraint matrix}
\index{constraint matrix, $A$}
\index{all-ones vector!$b\in \R^{2n-1}$}

The standard Karush-Kuhn-Tucker, KKT, optimality conditions for the 
primal-dual variables $(x,y,z)$ for~\cref{eq:mainprob} are:
\begin{equation}
\label{eq:optcondxyz_copy}
\begin{array}{rcl}
\begin{bmatrix} 
x -\hat x -A^Ty-z \cr
Ax - b  \cr
z^{T}x
\end{bmatrix} 
=0, \quad x,z\in \Rntp, y\in \Rntm.	
\end{array}
\end{equation}		
%%\begin{noteH}
%%\[
%%\begin{array}{rcl}
%%\begin{bmatrix} 
%%x -\hat x -A^Ty-z \cr
%%Ax - b  \cr
%%z\circ x -\mu e
%%\end{bmatrix} 
%%=0, \quad x,z\in \Rntp, y\in \Rntm.	
%%\end{array}
%%\]
%%\[
%%Z\Delta x + X \Delta z   =  -rc
%%\]
%%\[
%%Z\Delta x + X (\Delta x - A^T\Delta y)   =  -rc + Xrd
%%\]
%%\[
%%\Delta x  = ((Z +X)^{-1} )  ?????
%%\]
%%is this partly why mosek provides such poor accuracy?
%%\end{noteH}
The system \eqref{eq:optcondxyz_copy} is a bilinear system of order $n^{2}$.
\Cref{thm:optcondG} below shows that we can simplify the KKT conditions
and obtain an elegant characterization of optimality. This new optimality condition
is a smaller system of size $2n-1$. However, the new system involves 
a nonsmooth (metric) projection of a given $v$ onto the nonnegative orthant,
denoted $v_+ = \argmin_x \{ \|x-v\| : x \geq 0\}$. 
(The absolute value of the
projection onto the nonpositive orthant is denoted $v_{-}$.)
Therefore we get $v = v_+ - v_-,\, v_+^Tv_-=0$.
\index{projection onto $\Rnp$, $v_{+}$}			
\index{projection onto $\R^n_-$, $v_{-}$}			
\index{$v_{+}$, projection onto $\Rnp$}			
\index{$v_{-}$, projection onto $\R^n_-$}			

\begin{theorem}
\label{thm:optcondG}
Let $\hat x\in \Rnt$ be given.
The optimal solution $x^*\in \Rnt$ for the nearest doubly stochastic problem 
\cref{eq:mainprob} exists and is unique. Moreover,
$x^*\in \Rnt$ solves \cref{eq:mainprob} if, and only if, 
\begin{equation}
\label{eq:optcondG}
x^* = (\hat x+A^Ty^*)_+, \quad
F(y^*):=A(\hat x+A^Ty^*)_+ -b = 0,\, \text{ for some  } y^* \in \Rntm.
\end{equation}
\end{theorem}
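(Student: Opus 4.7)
The plan is to first establish existence and uniqueness of $x^{*}$ by convex analysis, then reduce the KKT system \eqref{eq:optcondxyz_copy} to the compact form \eqref{eq:optcondG} using the Moreau decomposition of the nonnegative orthant.

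For existence and uniqueness, I would first observe that the feasible set is nonempty (the matrix $\frac{1}{n}ee^{T}$ satisfies $Ax=b$ and $x\geq 0$), closed, and convex, while the objective $\tfrac{1}{2}\|x-\hat{x}\|^{2}$ is strictly convex and coercive. Hence a unique minimizer $x^{*}$ exists by standard arguments.

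The main work is the equivalence. Since the strictly feasible point $\tfrac{1}{n}ee^{T}$ satisfies MFCQ as noted above the theorem, the KKT conditions \eqref{eq:optcondxyz_copy} are necessary and sufficient for optimality, and a multiplier $(y^{*},z^{*})$ with $z^{*}\in\R^{n^{2}}_{+}$, $y^{*}\in\R^{2n-1}$ exists. From the stationarity equation I would solve $z = x-\hat{x}-A^{T}y$ and substitute into the remaining conditions. Setting $v := \hat{x}+A^{T}y^{*}$, the triple $(x^{*},z^{*})$ must then satisfy
\[
x^{*}-z^{*}=v,\qquad x^{*}\geq 0,\qquad z^{*}\geq 0,\qquad (x^{*})^{T}z^{*}=0.
\]
The key step is to invoke the Moreau decomposition for the self-dual cone $\R^{n^{2}}_{+}$: every $v$ admits the unique decomposition $v=v_{+}-v_{-}$ with $v_{+},v_{-}\geq 0$ and $v_{+}^{T}v_{-}=0$. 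Uniqueness of this decomposition forces $x^{*}=v_{+}=(\hat{x}+A^{T}y^{*})_{+}$ and $z^{*}=v_{-}$. Substituting $x^{*}=(\hat{x}+A^{T}y^{*})_{+}$ into the primal equality $Ax^{*}=b$ yields $F(y^{*})=0$, establishing the forward direction.

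For the converse, given any $y^{*}$ with $F(y^{*})=0$, define $x^{*}:=(\hat{x}+A^{T}y^{*})_{+}$ and $z^{*}:=(\hat{x}+A^{T}y^{*})_{-}$. Then $x^{*},z^{*}\geq 0$, $x^{*}-z^{*}=\hat{x}+A^{T}y^{*}$, and $(x^{*})^{T}z^{*}=0$, so all four KKT conditions in \eqref{eq:optcondxyz_copy} hold; by sufficiency of KKT under MFCQ (or directly, by convexity), $x^{*}$ is the optimal solution. I expect no serious obstacle here: the only subtle point is recognizing that the bilinear complementarity $z^{T}x=0$ together with the sign and linear constraints is precisely the characterization of the projection onto $\R^{n^{2}}_{+}$, which collapses the $n^{2}$-dimensional KKT system to the $(2n-1)$-dimensional semismooth equation $F(y^{*})=0$.
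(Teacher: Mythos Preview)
Your proposal is correct and follows essentially the same approach as the paper: both argue existence and uniqueness from convexity (the paper phrases it as projection onto a closed convex set), invoke the KKT conditions \eqref{eq:optcondxyz_copy}, and recognize that stationarity together with complementarity forces $x^{*}=(\hat x+A^{T}y^{*})_{+}$ and $z^{*}=(\hat x+A^{T}y^{*})_{-}$ via the decomposition $v=v_{+}-v_{-}$. Your write-up is in fact more explicit than the paper's---you name the Moreau decomposition and spell out the converse direction separately---but the underlying argument is identical.
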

\begin{proof}
The existence and uniqueness of the optimum	$x$ follows since \cref{eq:mainprob} is a projection onto a closed convex set.
%				from the strict convexity of the objective function and the fact that the feasible region of \cref{eq:mainprob} is a closed and bounded convex set. 
%		(In	fact it is compact as it is the convex hull of the permutation matrices.)
%				Let $x$ be optimal for \cref{eq:mainprob}.		
The Lagrangian dual of \cref{eq:mainprob} is
$$
\max_{z\geq 0,y} \min_x L(x,y,z) = \frac 12 \|x-\hat x\|^2 -y^T(Ax-b) -z^T x.
$$
For nonnegative vectors $z,x\geq 0$, the optimality is characterized by the KKT conditions \eqref{eq:optcondxyz_copy}, i.e.,~from
dual feasibility ($\nabla_x L(x,y,z) = (x-\hat x) -A^Ty -z = 0$), 
primal feasibility, complementary slackness, respectively.
We get
\begin{equation}
\label{eq:optcondplusminus}
0\leq x = (\hat x + A^Ty)_+ - (\hat x+A^Ty)_- + z, \,z\geq 0, \, Ax=b, \,  
z_ix_i=0, \forall i.
\end{equation}
This implies
\[
x = (\hat x+A^Ty)_+, \, z = (\hat{x}+A^{T}y)_{-}.
\]
\end{proof}

It follows from \Cref{thm:optcondG} that if
$F(y) = 0$, then $x = (\hat{x}+A^{T}y)_{+}$ is the optimal primal point
and $z = (\hat{x}+A^{T}y)_{-}$ is an optimal dual vector for
\cref{eq:mainprob}. We note that this characterization is well-known, and
it can also be derived for more general results for finite dimensional problems
e.g.,~\cite{smw2,homwolkA:04}, and for infinite dimensional
problems, see e.g.~\cite{MiSmSwWa:85,BoWo:86,deutsch1997dual,BoLe1:92}.
In \cite{QiSun:06}, this reformulation strategy is used for the nearest correlation matrix problem. They also prove that the obtained semismooth system has a nonsingular Jacobian at the optimum and leads to a very competitive algorithm.
This is in contrast to our problem, where the generalized
Jacobian at the optimum can contain many highly singular matrices.

\begin{remark}
Our problem is a special case of the linearly constrained linear least squares
problem, e.g.~\cite{MR1349828}, that is itself a special case of 
quadratic programming, e.g.~\cite{MR95d:90001}. These problems lie
within the class of linear complementarity problems,
e.g.,~\cite{MR3396730}.

In contrast to our dual type algorithm that
applies a Newton-type method to the optimality conditions, 
the approaches in the literature include:
\begin{enumerate}
\item
active set methods, e.g.,~\cite{MR3642293};
\item
quadratic cost network flow problems, e.g.,~\cite{MR1183260,MR1117772};
\item
path following, interior point methods, e.g.~\cite{MR95d:90001}, 
that also use a Newton method applied to perturbed optimality
conditions;
\item
classical Lemke and Wolfe type methods, e.g.,~\cite{MR3396730};
\item
splitting methods such as ADMM, e.g.~\cite{GHILW:20}.
\end{enumerate}
\end{remark}

\subsubsection{Semismooth Newton Methods}

\index{$D_{F}$, set of differentiable point of $F$}			
\index{set of differentiable point of $F$, $D_{F}$}		
\index{$F^\prime(y)$, Jacobian of $F$ at $y$}	
\index{Jacobian of $F$ at $y$, $F^\prime(y)$}	
\index{$\partial  F(y)$, generalized Jacobian at $y$}
\index{generalized Jacobian, $\partial  F(y)$}

In this paper we solve the nearest matrix problem by applying a semismooth
Newton method to the nonsmooth optimality conditions 
of~\cref{eq:mainprob} in the form $F(y)=0$. We now present the
preliminaries for semismooth Newton methods.

Suppose $F:\R^{s} \rightarrow \R^{t}$ is locally Lipschitzian. According to Rademacher's Theorem \cite{Rademacher}, $F$ is Frech\'et differentiable almost everywhere. Denote by $D_{F}$ the set of points at which $F$ is differentiable. Let $F^\prime(y)$ be the usual Jacobian matrix at $y \in D_{F}$. The generalized Jacobian $\partial F(y)$ of $F$ at $y$ in the sense of Clarke \cite{clarke1990optimization} is
\begin{equation}
\partial  F(y) := \conv \left\{ \lim_{\substack{y_{i} \rightarrow y \\ y_{i} \in D_{F}}} F'(y_{i}) \right\}.
\end{equation}
\index{nonsingular!$\partial F(y)$}			
The generalized Jacobian $\partial F(y)$ is said to be \emph{nonsingular}, if every $V \in \partial F(y)$ is nonsingular. 
The Lipschitz continuous function $F$ is \textdef{semismooth} at $y$, if $F$ is directionally differentiable 
at $y$ and 
$$
\|F(y+d)-F(y)-Vd\| = {\scriptstyle\mathcal{O}}(\|d\|), \ \forall \; V \in \partial F(y+d) \text{ and } d\rightarrow 0 .
$$
Moreover, $F$ is \textdef{strongly semismooth} at $y$, if $F$ is semismooth at $y$ and
$$
\|F(y+d)-F(y)-Vd\| = {\scriptstyle\mathcal{O}}(\|d\|^{2}) \quad  \forall \; V \in \partial F(y+d) \text{ and } d\rightarrow 0.
$$
We note that the projection operator $v_+$ in our optimality
conditions~\cref{eq:optcondG} is a special case of a metric
projection operator and is strongly semismooth
e.g.,~\cite{sun2002semismooth,chen2003analysis}. 

Now let $y^{0}$ be a given initial point.
If $\partial F(y)$ is nonsingular, the semismooth Newton method for
solving equation $F(y) = 0$ is defined by the iterations
\index{$y^{0}$, initial point}
\index{initial point, $y^{0}$}
\begin{equation}\label{semiNM}
\boxed{	y^{k+1} = y^{k} - V_{k}^{-1}F(y^{k}),
\text{ with } V_{k} \in \partial F(y^{k}) .}
\end{equation}
A sequence $\{y^{k}\}$, is said to
\textdef{converge Q-quadratically} to $y^{*}$, if $y^k\to y^*$ and
\[
\limsup_{k\rightarrow \infty}\frac{\norm{y^{k+1}-y^{*}}}{\norm{y^{k}-y^{*}}^{2}} < M, 
\text{  for some positive constant  } M>0.
\]

The following local convergence result for the semismooth Newton method
as applied to a semismooth function $F$ is due to \cite{qi1993nonsmooth}.
\begin{theorem}\cite{qi1993nonsmooth}\label{semiNMconv}
Let $F(y^{*}) = 0$ and let
$\partial F(y^{*})$ be nonsingular. 
If $F$ is (strongly) semismooth at $y^{*}$,
then the semismooth Newton method
\cref{semiNM} is (Q-quadratically) convergent in a neighborhood of
$y^{*}$. 
%$$||y^{k+1} - y^{*}|| = o\left(||y^{k} - y^{*}||\right).$$ locally Lipschitz continuous and 
\end{theorem}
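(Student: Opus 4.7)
The plan is to bound $\|y^{k+1}-y^{*}\|$ in terms of $\|y^{k}-y^{*}\|$ by rewriting the iteration so that the semismoothness estimate can be plugged in directly. Since $F(y^{*})=0$, we have
\[
y^{k+1}-y^{*} = y^{k}-y^{*}-V_{k}^{-1}F(y^{k}) = -V_{k}^{-1}\bigl[F(y^{k})-F(y^{*})-V_{k}(y^{k}-y^{*})\bigr].
\]
Taking norms gives $\|y^{k+1}-y^{*}\|\leq \|V_{k}^{-1}\|\cdot \|F(y^{k})-F(y^{*})-V_{k}(y^{k}-y^{*})\|$, so the whole argument reduces to two ingredients: a uniform bound on $\|V_{k}^{-1}\|$ for $y^{k}$ near $y^{*}$, and the semismoothness residual estimate applied at the base point $y^{*}$ with direction $d=y^{k}-y^{*}$.

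The first ingredient is where I expect the main technical obstacle, since we need nonsingularity of $\partial F(y^{*})$ to propagate to a full neighborhood. The standard route is: (i) invoke the upper semicontinuity (closedness of graph and local boundedness) of the Clarke generalized Jacobian $\partial F$, which gives, for every $\epsilon>0$, a $\delta>0$ such that $\partial F(y)\subseteq \partial F(y^{*})+\epsilon B$ whenever $\|y-y^{*}\|<\delta$; (ii) use that $\partial F(y^{*})$ is compact and contained in the open set of nonsingular matrices (by hypothesis), so there is a uniform lower bound on singular values on a neighborhood of $\partial F(y^{*})$; (iii) conclude that there exist $\delta_{0}>0$ and $M>0$ with $\|V^{-1}\|\leq M$ for all $V\in \partial F(y)$ and all $y$ with $\|y-y^{*}\|<\delta_{0}$. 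The delicate point here is justifying that perturbation of a nonsingular matrix by a small norm perturbation remains nonsingular uniformly over the compact set $\partial F(y^{*})$; this relies on the continuity of the smallest singular value.

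Once the uniform bound $\|V_{k}^{-1}\|\leq M$ is in hand, the second ingredient is immediate from the definitions. With $y=y^{*}$ and $d=y^{k}-y^{*}$, semismoothness at $y^{*}$ yields, for any $V_{k}\in \partial F(y^{*}+d)=\partial F(y^{k})$,
\[
\|F(y^{k})-F(y^{*})-V_{k}(y^{k}-y^{*})\| = {\scriptstyle\mathcal{O}}(\|y^{k}-y^{*}\|),
\]
and strong semismoothness upgrades the right side to ${\scriptstyle\mathcal{O}}(\|y^{k}-y^{*}\|^{2})$. Combining with the bound from the iteration gives $\|y^{k+1}-y^{*}\|\leq M\cdot {\scriptstyle\mathcal{O}}(\|y^{k}-y^{*}\|)$, which is the Q-superlinear estimate, and under strong semismoothness $\|y^{k+1}-y^{*}\|\leq M\cdot {\scriptstyle\mathcal{O}}(\|y^{k}-y^{*}\|^{2})$, giving Q-quadratic convergence.

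Finally, to turn these one-step estimates into a statement about convergence \emph{in a neighborhood} of $y^{*}$, I would carry out a short inductive argument: shrink $\delta_{0}$ further so that the $o(\|y^{k}-y^{*}\|)$ term is bounded by $\tfrac{1}{2M}\|y^{k}-y^{*}\|$, ensuring $\|y^{k+1}-y^{*}\|\leq \tfrac{1}{2}\|y^{k}-y^{*}\|<\delta_{0}$. This keeps the next iterate in the neighborhood where both the Jacobian bound and the semismoothness residual estimate are valid, and the asymptotic rate claim then follows by taking $k\to\infty$.
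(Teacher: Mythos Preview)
Your proof is correct and follows the standard argument from Qi and Sun~\cite{qi1993nonsmooth}. Note, however, that the paper does not supply its own proof of this theorem: it is stated with a citation to~\cite{qi1993nonsmooth} and used as a black box, so there is nothing in the paper to compare your argument against.
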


The nonsingularity assumption for $\partial F(y^{*})$ can be a restrictive
assumption for the convergence of semismooth (and smooth) Newton methods. This
condition is not satisfied by many applications, including our nearest
doubly stochastic matrix problem. In these cases, regularization such 
as the Levenberg-Marquardt method (LMM) could be used to obtain the
nonsingularity. If $F$ is differentiable and the local error
bound condition is satisfied, see~\Cref{def:localerrorbnd} below,
then the LMM approach achieves quadratic convergence, 
see~\cite{levenberg1944method,marquardt1963algorithm,yamashita2001rate,fan2005quadratic}.
Note that the local error bound condition does not hold for the nearest doubly stochastic matrix problem, see \Cref{leb}.

%			 {\color{red} collect more result, any convergence ?}

%			To the best of our knowledge, no convergence result is known for the nonsmooth case.

% and superlinear convergence is guaranteed if $F$ is differentiable. 
% The Newton's method can be used to find a solution of the nonlinear system $F(y) = 0$, i.e., finding $y^{*}$ such that $F(y^{*})= 0$.

\subsection{Contributions}
\begin{enumerate}
\item 
We present a modified two-step semismooth Newton method that exploits
the special network structure of our nearest matrix problem.
\item
At each iterate $y$, the first step finds a point (vertex) $y^\prime$
in the same equivalence class so that we can guarantee that the matrix 
chosen from the generalized Jacobian is nonsingular. Thus a regular
Newton step can be taken.
\item
This two-step method converges Q-quadratically for
the nearest doubly stochastic matrix problem. This is done in the
absence of the local error bound condition. The problem structure allows for Q-quadratic convergence to
the solution. The main idea of our algorithm is to partition the search 
space into equivalence classes so that the difficulty of singular 
generalized Jacobians can be avoided. 
\item 
The numerical tests show that our algorithm outperforms existing methods both in speed and accuracy. The algorithm is also very robust for difficult instances.
\end{enumerate}

\section{Semismooth Newton Method for Connected $X^*$}
\label{sec:semi}

In this section, we show that the semismooth Newton method \cref{semiNM} can
be used to find a solution to the optimality conditions
\cref{nm_eq} when the bipartite graph for the
optimal primal solution $X^*=\Mat(x^*)$ of \cref{eq:mainprob} is
\emph{connected}.

\subsection{Bipartite Graphs and Connectedness}
\index{bipartite graph}
For every matrix $X\in \Rmn$ we associate a bipartite graph $G=(V,E)$
with node set divided into two $V_1,V_2$ corresponding to the
rows and columns of $X$, respectively. 
The edges $(i,j) \in E$ correspond to \emph{nonzero} entries of
$X$, i.e.,~$ij\in E \iff i \in V_1, j \in V_2, X_{ij}\neq 0$.
 The adjacency matrix of $G$ can be written as 
\begin{equation}
\label{eq:Badj}
\begin{bmatrix}
	0 & B \\
	B^T & 0
\end{bmatrix}.
\end{equation}
We call the zero-one matrix $B$ the \textdef{reduced adjacency
matrix} of the bipartite graph $G$. We call $B$ and $X$
\textdef{connected} matrices if the graph $G$ is connected. We call them 
\textdef{disconnected} otherwise.

\begin{lemma}[{\textdef{connected matrix}}, 
    {\cite[page 109]{BrualRys:91}}]\label{def:cm}
	A matrix $X\in \Rmn$ is \textdef{connected} if
	there do not exist permutation matrices $P$ and $Q$ such that
	\[
	PXQ = \begin{bmatrix}
		X^1& 0 \\
		0 & X^2
	\end{bmatrix},
	\]
	where $X^1$ is $p\times q$ satisfying 
$1\leq p+q\leq m+n-1$.\footnote{A connected matrix
is often called indecomposable in  the literature.}
\end{lemma}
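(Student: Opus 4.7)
The final statement, despite appearing in a \texttt{lemma} environment, is in fact the Brualdi--Ryser definition of a \emph{connected matrix} transcribed from \cite[page 109]{BrualRys:91}; as the accompanying footnote indicates, the same notion is commonly called \emph{indecomposability}. The word ``if'' here is used in the standard definitional sense—introducing the term, not asserting an implication—so there is no mathematical claim to establish. My plan is therefore to write no proof and simply to note that the statement serves to fix the terminology used in the sections that follow.

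The only editorial refinement I would consider is replacing the \texttt{lemma} environment by a \texttt{definition} environment, or rewording the sentence to ``$X \in \Rmn$ is said to be \emph{connected} if and only if there do not exist permutation matrices $P$ and $Q$ \dots,'' so as to remove any ambiguity in how the reader should parse ``if.'' After that, the citation to \cite{BrualRys:91} carries whatever independent justification one might want for the nomenclature, and the subsequent analysis in the paper that appeals to the property (for instance, the discussion of the bipartite graph structure and its role in the nonsingularity of the relevant Jacobian blocks) can simply invoke this labeled definition.

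There is accordingly no principal obstacle to grapple with. The risk a reader faces is merely cosmetic: one could mistake the statement for a substantive lemma and hunt for a proof that is not needed. The appropriate remedy is typographic—adjusting the environment name and inserting ``if and only if''—rather than mathematical, and no further action is required from the authors at this point in the paper.
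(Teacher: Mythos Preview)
Your reading is correct and matches the paper: the statement is a cited definition (from Brualdi--Ryser) rather than a claim requiring proof, and the paper accordingly provides no proof. Your suggestion to relabel it as a \texttt{definition} environment is reasonable stylistic advice but not a mathematical issue.
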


\index{$\bar{R},\bar{C}$, complements of $R,C$}
\index{$X^{\bar{R},\bar{C}}$}
\index{$X^{R,C}$}
\index{reduced adjacency matrix}

Let \textdef{$N=\{1,\ldots,n\}$}.
In this paper we consider square matrices $X\in \Rnn$ so that the associated 
bipartite graph has edges $ij\in N\times N$. Let
$R, C \subseteq N$,be two subsets,
with $\bar{R}$ and $\bar{C}$ the respective complements in $N$.
Then $X\in \Rnn$ can be 
partitioned and permuted using the two subsets as
\begin{equation}\label{blk_X}
	\begin{bmatrix}
		X_{\bar{R},\bar{C}} & X_{\bar{R},C}\\
		X_{R,\bar{C}} & X_{R,C}\\
	\end{bmatrix}.
\end{equation} 
We note that $X$ is connected if both $X_{R,\bar{C}},
X_{\bar{R},C}$ are zero or empty blocks, for some pair of subsets. We emphasize 
that the diagonal blocks are \emph{not} necessarily square; and if $X$ is disconnected, then one of them can be empty and thus there are zero rows or columns.

We partition the dual variables $y$ corresponding to the column and row sum
constraints as
\index{$y = \begin{pmatrix} c\cr r\end{pmatrix} \in \R^{2n-1}$}
\begin{equation}
	\label{eq:ycr}
	y = \begin{pmatrix} c\cr r\end{pmatrix} \in \R^{2n-1}, \quad c\in \R^{n}, r\in \R^{n-1}.
\end{equation}
The structure of $A$ enables us to write the equation 
$X  =  \Mat\left(\hat x+A^Ty\right)_+$ as
\begin{equation}\label{Xrc}
	X_{ij} = \begin{cases}
		( \hat{X}_{ij} + r_{i} + c_{j} )_+ & \text{ if } i \neq n, \forall j,\\
		( \hat{X}_{ij} + c_{j} )_+ &  \text{ if }i=n, \forall j.
	\end{cases}
\end{equation}

\subsection{The Algorithm for Connected $X^*$}

Recall that the optimality conditions function
\begin{equation}
\label{nm_eq}
F(y) = A(\hat{x} + A^Ty)_+ - b = 0,
\end{equation}
is \textdef{strongly semismooth}, see
\cite{sun2002semismooth,chen2003analysis}. Our algorithm is based on
applying a Newton-type method to solve this equation.
More precisely, at each iterate $y$, we have
$x=(\hat x +A^Ty)_+$ and $z=(\hat x +A^Ty)_-$, and so we guarantee
dual feasibility and complementarity:
\[
x -(\hat x+A^Ty) -z=0, \, x^Tz=0, \quad x,z\geq 0.
\]
The Newton algorithm solves $F(y)=0$ in order to obtain
the missing linear primal feasibility $Ax=b$.

Below we provide a sufficient condition for the nonsingularity of the generalized 
Jacobian $\partial F(y)$ at a $y\in \R^{2n-1}$. 
From~\Cref{semiNMconv}, we see that this
sufficient condition then guarantees that the 
semismooth Newton method converges locally to an
optimum with a Q-quadratic convergence rate 

\index{$\cM(y)$}
\index{$\partial  F(y)$, generalized Jacobian at $y$}
\index{generalized Jacobian, $\partial  F(y)$}

In order to obtain the generalized Jacobian at 
$y\in \R^{2n-1}$, we need the following set. Recall that we 
use double indices for vectors 
$x=(x_{ij})=((\hat{x}+A^{T}y)_{ij})\in \R^{n^2}$.
\begin{equation}
\label{My}
\textdef{$\cM(y)$}:= \left\{M \in \Rnn \;|\; 
M_{ij} = 
\begin{array}{cl}
&\\
\left\{\begin{array}{cl}
1 & \text{ if } (\hat{x}+A^{T}y)_{ij}>0 \\
 \left[0,1\right]  & \text{ if } (\hat{x}+A^{T}y)_{ij}=0 \\
0 & \text{ if } (\hat{x}+A^{T}y)_{ij}<0
\end{array} \right. \\
&
\end{array} 
\right \}.
\end{equation}
Note that the \emph{minimal} $M$, elementwise, is the adjacency matrix for 
$\Mat(\hat x  + A^Ty)_+$.

The generalized Jacobian of the non-linear system $\cref{nm_eq}$ at $y$
is given by the set
\begin{equation}\label{eq:jac}
\partial  F(y) = \{A\Diag(\kvec(M))A^{T} \;|\; M \in \cM(y)\}.
\end{equation}
For example, for the case where $\hat x + A^Ty
> 0$ and $\Delta y$ small, we get
\[
F(y+\Delta y) = A(\hat x + A^T(y+\Delta y))-b = F(y) +
A\Diag(\kvec(M))A^T\Delta y, \, \Diag(\kvec(M))=I.
\]
In the general case, we replace the elements of $M$
with appropriate $M_{ij}\in [0,1]$. In our applications, we choose
$M_{ij}\in \{0,1\}$, and in fact, we choose the maximal $M$ as defined
below in~\cref{maxs}. Therefore, in our applications,
every $V\in \partial F(y)$ is a sum of rank one zero-one matrices.

\index{$\cV(M)$} 
\index{$M\in \cM(y)$}	
\index{$\hat{M}$}

The next result shows that the matrices in the generalized Jacobian have
a special structure in terms of the matrices $M$ in~\cref{My}.

\begin{prop}\label{Vstructure}
Let $y\in \R^{2n-1}$ be given and let
$M\in \cM(y)$. Then the linear transformation
\begin{equation}\label{eq:jac2}
	\cV(M) :=A\Diag(\kvec(M))A^{T} \in \partial  F(y)\subset \Sc^{2n-1}_+.
\end{equation}
Moreover, $\partial F(y)$ is a nonempty, convex compact set. And
$\partial F(y)$ is a singleton if, and only if $F$ is differentiable 
if, and only if, $\cM(y)$ is a singleton. 

Now let
$$
M \in \cM(y) \in \mathbb{R}^{n \times n}, \quad
\hat M \in \mathbb{R}^{(n-1) \times n},
$$
where the latter is formed from the first $n-1$ rows of $M$. 
Then the matrix $\cV(M)$ has the following 
structure
\begin{equation}
\label{eq:MMhatstruc}
\cV(M) = \begin{bmatrix}
\Diag(M^{T}e) & \hat{M}^{T} \\
\hat{M} & \Diag(\hat{M}e)
\end{bmatrix}.\footnote{$\cV(M)$ can be obtained by removing the last row and column of the signless Laplacian of the
adjacency matrix of the graph $G$
in~\cref{eq:Badj}, \cite{MR2312332,HESSERT2021112451}.}
\end{equation}
\end{prop}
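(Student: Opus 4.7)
The proof strategy is to reduce everything to the chain rule for Clarke's generalized Jacobian, exploiting that $F$ surrounds the projection $\Phi(v):=v_+$ on both sides by the affine maps $y\mapsto \hat x+A^Ty$ and $x\mapsto Ax-b$. Since both surrounding maps are $C^{\infty}$, Clarke's chain rule applies with equality rather than mere inclusion, giving
\[
\partial F(y) \;=\; A\cdot \partial \Phi(\hat x+A^Ty)\cdot A^T.
\]
The generalized Jacobian of the coordinatewise projection is classical: $\partial \Phi(v)$ consists of all diagonal matrices $\Diag(d)$ with $d_{\ell}=1$ where $v_{\ell}>0$, $d_{\ell}=0$ where $v_{\ell}<0$, and $d_{\ell}\in[0,1]$ where $v_{\ell}=0$. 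Writing $d=\kvec(M)$, this set is exactly $\{\Diag(\kvec(M)):M\in\cM(y)\}$, and hence $\partial F(y)=\{\cV(M):M\in\cM(y)\}$, with $\cV(M)\succeq 0$ by congruence since $\Diag(\kvec(M))\succeq 0$.

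For the set-theoretic properties, nonemptiness, convexity, and compactness of $\partial F(y)$ follow from the general theory of Clarke's construction for locally Lipschitz maps, and also directly from the explicit formula, since $\cM(y)$ is a finite Cartesian product of singletons and closed unit intervals and $M\mapsto \cV(M)$ is continuous affine. For the three-way equivalence, the key observation is that $\cM(y)$ is a singleton if and only if no coordinate of $\hat x+A^Ty$ vanishes; in that case $F$ is locally affine near $y$, hence $C^{\infty}$, and $\partial F(y)$ trivially reduces to a single classical Jacobian. Conversely, if $(\hat x+A^Ty)_{ij}=0$ for some pair $(i,j)$, then $\cM(y)$ contains two matrices $M,M'$ differing only at position $(i,j)$, and inspecting the block form in the statement shows that the $(j,j)$ diagonal entry of the column-sum block of $\cV$ already changes; thus $\partial F(y)$ is not a singleton. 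The equivalence between ``$\partial F(y)$ singleton'' and Fréchet differentiability of $F$ at $y$ is a standard result for locally Lipschitz maps.

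The block form \cref{eq:MMhatstruc} itself is obtained by a direct index computation. I would partition $A=\bmat{A_1\\ A_2}$ with $A_1=I\otimes e^T\in\R^{n\times n^2}$ encoding the column-sum constraints and $A_2\in\R^{(n-1)\times n^2}$ being the first $n-1$ rows of $e^T\otimes I$. Setting $D=\Diag(\kvec(M))$, the four blocks compute as follows: $A_1DA_1^T=\Diag(M^Te)$, since the $j$th row of $A_1$ indicates the $j$th column of $X$ and the $(j,j)$ entry collects the $j$th column sum of $M$; $A_2DA_2^T=\Diag(\hat M e)$, since the $i$th row of $A_2$ (for $i\leq n-1$) indicates the $i$th row of $X$; and $A_1DA_2^T=\hat M^T$, with its $(j,i)$ entry collapsing to the single nonzero term $M_{ij}$. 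Assembling these four blocks yields exactly \cref{eq:MMhatstruc}.

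The only nonroutine step is invoking Clarke's chain rule with equality rather than inclusion, which is standard when the maps surrounding the nonsmooth component are (strictly) differentiable, as is the case here since both are affine. The remaining work is bookkeeping with the Kronecker structure of $A$ and standard properties of Clarke's construction.
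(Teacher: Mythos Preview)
Your block-structure computation for \cref{eq:MMhatstruc} and the semidefiniteness argument are correct and more explicit than the paper's terse proof (which just says these ``follow from the structure of $A$'' and cites general Clarke theory for the set-theoretic properties). But there is a genuine gap in your first step. You assert that Clarke's chain rule gives $\partial F(y)=A\,\partial\Phi(\hat x+A^Ty)\,A^T$ \emph{with equality} because the surrounding maps are smooth. Post-composition by the linear map $A$ does preserve equality, but pre-composition by the affine map $L_1:y\mapsto\hat x+A^Ty$ only yields the inclusion $\partial(\Phi\circ L_1)(y)\subseteq\partial\Phi(L_1(y))\,A^T$; the usual sufficient condition for equality is surjectivity of $L_1'=A^T$, which fails for $n\ge2$. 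The inclusion can be strict: for $n=2$, $\hat x=0$, $y=0$, the sign pattern $(+,-,-,+)$ on $A^Td$ is infeasible (it would force $d_3>-d_1>0$ and simultaneously $d_3<-d_2<0$), so $\cV\bigl(\begin{smallmatrix}1&0\\0&1\end{smallmatrix}\bigr)$ is not a limit of $F'(y_k)$; and since $M\mapsto\cV(M)$ is injective when $n=2$, this matrix is an extreme point of $\{\cV(M):M\in\cM(0)\}$ lying strictly outside $\partial F(0)$. The paper's \cref{eq:jac} asserts the same equality without proof, so the issue is inherited rather than introduced by you; fortunately only the inclusion $\partial F(y)\subseteq\{\cV(M)\}$ is actually needed downstream.

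A smaller slip affects the three-way equivalence: ``$F$ Fr\'echet differentiable at $y$ $\Rightarrow$ $\partial F(y)$ a singleton'' is \emph{not} a general fact for locally Lipschitz maps (consider $t\mapsto t^2\sin(1/t)$ at $0$). The reverse implication is standard. To close the loop you should argue directly that $\cM(y)$ not a singleton forces $F$ non-differentiable: if $(\hat x+A^Ty)_{ij}=0$, pick $d$ with $(A^Td)_{ij}\neq0$ (possible because that column of $A$ is nonzero) and check $F'(y;d)+F'(y;-d)=Aw$ with $w\ge0$, $w\neq0$, hence $Aw\neq0$ since $A\ge0$ has no zero columns. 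This is precisely why the paper's proof singles out the remark ``$A\ge0$ with no zero columns''.
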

\begin{proof}
The convexity  and compactness of the generalized Jacobian are well
known properties. The singleton property is clear from the definitions.
Note that $A\geq 0$ with no zero columns.

The expression for $\cV(M)$ follows from  the structure of $A$.	
\end{proof}

For $y \in \R^{2n-1}$, the fact that the matrix $\cV(M)$ 
in \cref{eq:jac2} is nonsingular for a given $M \in \cM(y)$, is equivalent to
linear independence of $2n-1$ columns in $A$ 
associated to a subset of the positive entries in $M$. Therefore, it is
clear that one should choose as
many elements $M_{i,j}>0$ as possible to obtain a nonsingular element in the generalized Jacobian.
In what follows, we derive an alternative characterization
that connects the nonsingularity of $\cV(M)$ and the connectedness of
$M$, see~\Cref{Vsing} below.

%For the sake of simplicity, we continue to omit the subscript $s$ below when the meaning is clear. 
%The next result provides a characterization for the nonsingularity of the matrix $A\Diag(s)A^{T}$ when the vector $s$ is nonnegative.
\begin{lemma}\label{Vsing}
Let $M \geq 0$. The matrix $\mathcal{V}(M)$ is nonsingular if,
and only if, $M$ is connected.
\end{lemma}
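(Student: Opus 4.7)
The plan is to analyze the nullspace of $\mathcal{V}(M)$ directly using its positive semidefinite structure and the bipartite graph associated with $M$. Since $M\geq 0$, factoring
$$
v^T\mathcal{V}(M)v = v^T A \Diag(\kvec(M)) A^T v = \sum_{i,j} M_{ij}\bigl((A^T v)_{ij}\bigr)^2
$$
shows that $\mathcal{V}(M)$ is positive semidefinite and $v\in\ker\mathcal{V}(M)$ if and only if $(A^T v)_{ij}=0$ for every pair $(i,j)$ with $M_{ij}>0$. So it suffices to describe when this linear system on the support of $M$ forces $v=0$.

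Next, using the partition $v=\bigl(\begin{smallmatrix} c\\ r\end{smallmatrix}\bigr)\in\R^n\times\R^{n-1}$ from \cref{eq:ycr} and the explicit form of $A$ (the first $2n-1$ rows of $\bar A$ in \cref{fullA}), a direct computation gives
$$
(A^T v)_{ij}=\begin{cases} c_j + r_i, & i<n,\\ c_j, & i=n.\end{cases}
$$
Thus $v\in\ker\mathcal{V}(M)$ exactly when $c_j+r_i=0$ for every $(i,j)$ with $i<n$, $M_{ij}>0$, and $c_j=0$ for every $j$ with $M_{nj}>0$.

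I would then translate this into a graph-theoretic statement on the bipartite graph $G_M$ with column-side vertices $V_1=\{1,\ldots,n\}$, row-side vertices $V_2=\{1,\ldots,n\}$, and edges $(i,j)$ whenever $M_{ij}>0$. Labeling column vertex $j$ by $c_j$, row vertex $i<n$ by $r_i$, and row vertex $n$ by the grounded value $0$, the condition above says adjacent vertices carry negatives of each other. Traversing a connected component of $G_M$ from any one vertex, the bipartite alternating-sign structure determines the labels everywhere in that component from the label at a single vertex; in particular, if the component contains row-vertex $n$, the grounding propagates and kills every label in that component.

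It follows that $\dim\ker\mathcal{V}(M)$ equals the number of connected components of $G_M$ that avoid the row-$n$ vertex, and explicit null vectors for any such component are produced by assigning $+1$ on its column-side vertices, $-1$ on its row-side vertices, and $0$ everywhere else. Hence $\mathcal{V}(M)$ is nonsingular iff every component of $G_M$ contains the row-$n$ vertex iff $G_M$ has a single connected component, which by \Cref{def:cm} is precisely the connectedness of $M$. The main obstacle is handling the asymmetry created by deleting the last row of $\bar A$: this is what converts the (always singular) signless Laplacian of the weighted bipartite graph into a principal submatrix that becomes nonsingular exactly when one globally grounded vertex can reach every other, i.e., when $G_M$ is connected.
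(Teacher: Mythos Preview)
Your argument is correct and rests on the same quadratic-form identity the paper uses, namely
\[
v^T\mathcal{V}(M)v=\sum_{i,j}M_{ij}\bigl((A^Tv)_{ij}\bigr)^2,
\]
which is exactly the paper's computation $w^TVw=\langle W,M\rangle$ in \cref{eq:Vsing}. From that common starting point the two proofs diverge in organization: the paper treats the two implications separately, producing explicit null vectors in the disconnected case by block/zero-row case analysis, and in the singular case reading off a block-diagonal structure of $M$ from the zero/nonzero pattern of $u,v$. You instead interpret the equations $(A^Tv)_{ij}=0$ on the support of $M$ as a bipartite label-propagation problem with row-$n$ grounded, which handles both directions at once and, as a bonus, yields the exact kernel dimension (the number of components of $G_M$ missing the row-$n$ vertex). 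Substantively these are the same proof; your packaging is a bit tighter and more informative, while the paper's case analysis makes the explicit null vectors more visible.
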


\begin{proof}
This result can be derived easily using~\cite[Prop. 2.15]{MR1490579}.
Translated to our framework,
The result states that a set of linearly independent columns in 
our matrix $A$ forms a basis of $\R^{2n-1}$ if, and only if, the 
associated set of arcs forms a spanning tree. These arcs correspond to
the graph of our matrices $M$.
The result follows by noting that $\mathcal{V}(M)$ is positive definite 
if and only if the bipartite graph associated with $M$ is connected
and thus it contains a spanning tree.
($M$ is obtained
removing the last row and column of the signless Laplacian of the
adjacency matrix of the graph $G$
in~\cref{eq:Badj}. Results on singularity for signless Laplacians appear
in e.g.,~\cite[Prop. 2.1]{MR2312332}, and \cite{TAM20101734} for the
reduced signless Laplacian.)

%Let $a_{ij}\in \R^{2n-1}$ be the $k$-th column of $A \in \R^{2n-1}$. Then
%\begin{equation}\label{Vsing_eq}
%	\cV(M) =A\Diag(\kvec(M))A^{T} = \sum M_{ij}a_{ij}a_{ij}^{T}.
%%		\cV(M) =A\Diag(\kvec(M))A^{T} = \sum m_{k}a_{k}a_{k}^{T}.
%\end{equation}
%In view of \cref{Vsing_eq},we have that $\cV(M)$ is nonsingular if, and only if, there exists a set of vector in $\{ a_{ij} : M_{ij} >0 \}$ forming a basis of $\R^{2n-1}$. The latter is equivalent to the bipartite graph associated with $M$ contains a tree by \cite[Prop. 2.15]{MR1490579}.
%
%The matrix $M$ is connected if and only if the associated bipartite graph contains a tree. Let $T$ be the set of edges in the tree. Then $(i,j) \in T$ implies that $M_{ij} > 0$. We show that the submatrix $B = \begin{bmatrix}
%	content...
%\end{bmatrix}$
%
%We can order the 
%
%there exists a tree in $M$

We now include an alternative proof for the sake of self-containment. We denote
$V=\cV(M)$.

First suppose that $M$ is disconnected. We now proof $M$ is singular.
We distinguish the following two cases.
\begin{enumerate}
	\item Now consider the special case that
	$M$ contains a zero  column or a zero row among
	the first $n-1$ rows. Then there is a zero diagonal entry, see \Cref{Vstructure}. Since $V$ is
	positive semidefinite, we conclude that $V$ is singular.
	
	For the case where the last row of $M$ is zero, we have that $M^{T}e - \hat{M}^{T}e = 0$ and thus there is an eigenvector that
	has a $0$ eigenvalue, i.e., by abuse of notation and using $e$
of different dimensions, we see that
	$$V\begin{bmatrix}
		e\\
		-e
	\end{bmatrix} = \begin{bmatrix}
		\Diag(M^{T}e)e - \hat{M}^{T}e\\
		\hat{M}e - \Diag(\hat{M}e)e\\
	\end{bmatrix} = \begin{bmatrix}
		M^{T}e - \hat{M}^{T}e\\
		\hat{M}e - \hat{M}e\\
	\end{bmatrix} = 0.$$
	\item We now assume that $M$ and $\hat{M}$ can be permuted so that they have the form
	$$M = \begin{bmatrix}
		M_{1} & 0 \\
		0 & M_{2}
	\end{bmatrix} \text{ and } \hat{M} = \begin{bmatrix}
		M_{1} & 0 \\
		0 & \hat{M}_{2}
	\end{bmatrix}.$$ Using \Cref{Vstructure}, we obtain
	$$\begin{array}{rl}
		V = \begin{bmatrix}
			\Diag(M_{1}^{T}e) & 0 & M_{1}^{T} & 0 \\
			0 &	\Diag(M_{2}^{T}e) & 0 & \hat{M}_{2}^{T} \\
			M_{1} & 0 & \Diag(M_{1}e) & 0 \\
			0 &	\hat{M}_{2} & 0 & \Diag(\hat{M}_{2}e) \\
		\end{bmatrix}.
	\end{array}$$
	But then the first block column and the third block column of $V$ are linearly dependent. Thus $V$ is singular.
\end{enumerate}
Thus we have shown that $V$ is singular in both cases.  (Note that we do not need the nonnegativity condition $s\geq 0$ in this direction.)

Conversely, assume that $V$ is singular. Then there exists a non-zero vector $w = \begin{bmatrix}
u \\ v
\end{bmatrix} \in \R^{2n-1}$, for some $u \in \mathbb{R}^{n}$, $v \in \mathbb{R}^{n-1}$ such that $w^{T}Vw = 0$. We can rewrite $w^{T}Vw$ as follows.

\begin{equation}\label{eq:Vsing}
\begin{array}{rll}
w^{T}Vw &=& u^{T}\Diag(M^{T}e)u + 2 u^{T}\hat{M}^{T}v + v^{T} \Diag(\hat{M}e)v \\
&=& \sum_{j=1}^{n}u_{j}^{2}M_{n,j} + \sum_{i=1}^{n-1}\sum_{j=1}^{n} (v_{i} + u_{j})^{2}M_{i,j} \\
&=& \langle W, M \rangle,
\end{array}
\end{equation} 
where 
$$W:=\begin{bmatrix}
(v_{1}+u_{1})^2 & \cdots & (v_{1}+u_{n})^2 \\
\vdots & \ddots & \vdots \\
(v_{n-1}+u_{1})^2 & \cdots & (v_{n-1}+u_{n})^2 \\
u_{1}^2 & \cdots & u_{n}^2 \\
\end{bmatrix} \in \R^{n\times n}.$$
Up to permutation, we can assume $v_{1},\ldots,v_{k_{1}}$ and $u_{1},\ldots,u_{k_{2}}$ are the only non-zero entries in $w$, where $k_{1},k_{2}$ are nonnegative integers. Note that $k_{1}+k_{2} > 0$ as $w \neq 0$. We distinguish the following cases based on $k_{1}$ and $k_{2}$.

\begin{enumerate}
	\item Suppose that $0 < k_{1}$ and $0< k_{2} < n$.  The matrix $W$ can be partitioned correspondingly as
	$$W = \begin{bmatrix}
		W_{1} & W_{12} \\
		W_{21} & W_{2}
	\end{bmatrix}  \in \R^{n\times n},$$ with non-trivial off-diagonal blocks $W_{12}  \in \mathbb{R}^{k_{1} \times(n-k_{2})} $ and $W_{21} \in \mathbb{R}^{(n-k_{1}) \times k_{2}}$. By assumption, $W_{12}>0$ and $W_{21}>0$ are element-wise positive. Partitioning $M$ in the same way yields $$M = \begin{bmatrix}
		M_{1} & M_{12} \\
		M_{21} & M_{2}
	\end{bmatrix}.$$
	Note that ${W\geq 0}$ and  ${M \geq 0}$. As $\langle W, M \rangle  = w^{T}Vw = 0$, this implies that the off-diagonal blocks $M_{12}  \in \mathbb{R}^{k_{1} \times(n-k_{2})}$ and $M_{21} \in \mathbb{R}^{(n-k_{1}) \times k_{2}}$ must be zero. Therefore, $M$ is a block-diagonal matrix.
	
	\item Using the same argument as above, the remaining three possibilities lead to a zero row or column in $M$. They are listed below.
	$$\begin{array}{lrl}
		k_{1} = 0, k_{2} = n &\implies& M = 0; \\
		k_{1} = 0, 0 < k_{2} < n &\implies& \text{the first $k_{2}$ columns of $M$ are zeros}; \\
		k_{1} > 0, k_{2} = 0 &\implies&  \text{the first $k_{1}$ rows of $M$ are zeros}. \\
	\end{array}$$
\end{enumerate}
This shows that $M$ is disconnected.
\end{proof}

We now provide two properties of the generalized Jacobian that show the
relationships between nonsingularity, connectedness and also
differentiability and strict complementarity.
\begin{theorem}\label{sys_sing}
Let $y\in \R^{2n-1}$, and set
	\[
	X := \Mat(\hat{x}+A^{T}y)_+, \quad
	Z := \Mat(\hat{x}+A^{T}y)_-.
	\]
Then the following holds:
\begin{enumerate}
\item
The generalized Jacobian $\partial F(y)$ is 
nonsingular if, and only if, the matrix $X$ is connected.
\item
The generalized Jacobian $\partial F(y)$ is a singleton ($F$ is
differentiable at $y$)
if, and only if, strict complementarity, $X+Z>0$  holds.
\end{enumerate}
\end{theorem}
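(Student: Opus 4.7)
The plan is to reduce both parts to the already-established \Cref{Vsing} and to the differentiability criterion in \Cref{Vstructure}, so that almost no new work is needed.

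For the first part, I would exploit the observation that the family $\cM(y)$ has a unique entrywise-minimal element $M_{\min}$: it equals $1$ exactly on indices with $(\hat{x}+A^Ty)_{ij}>0$ and $0$ elsewhere, which is precisely the $0$-$1$ adjacency matrix of $X=\Mat(\hat{x}+A^Ty)_+$. For the $(\Leftarrow)$ direction, if $X$ is connected then $M_{\min}$ corresponds to a connected bipartite graph; any other $M\in \cM(y)$ dominates $M_{\min}$ entrywise, so the edge set of its graph contains that of $M_{\min}$ and is therefore connected as well. \Cref{Vsing} then yields nonsingularity of every $\cV(M)\in \partial F(y)$. For the $(\Rightarrow)$ direction, I would apply the nonsingularity hypothesis to $\cV(M_{\min})$ itself: \Cref{Vsing} forces $M_{\min}$ (and hence $X$) to be connected.

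For the second part, \Cref{Vstructure} already records that $F$ is differentiable at $y$ iff $\cM(y)$ is a singleton, so only the equivalence $\{\cM(y)\text{ is a singleton}\} \iff \{X+Z>0\}$ remains. The set $\cM(y)$ fails to be a singleton precisely when some index $(i,j)$ satisfies $(\hat{x}+A^Ty)_{ij}=0$, since those are exactly the positions where $M_{ij}$ ranges freely over $[0,1]$; all other positions pin $M_{ij}$ to $0$ or $1$. Equivalently, $\cM(y)$ is a singleton iff $(\hat{x}+A^Ty)_{ij}\neq 0$ for every $(i,j)$, i.e., iff either $X_{ij}>0$ or $Z_{ij}>0$ holds at each index, which is the coordinate-wise statement $X+Z>0$.

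The whole argument is short precisely because the structural work was done earlier. The only point requiring a small sanity check is that any $M\in \cM(y)$ whose free entries lie in $(0,1)$ still has the same induced edge set as the $0$-$1$ matrix obtained by rounding those entries to $1$, so the monotonicity step ``$M\geq M_{\min}$ entrywise implies its graph contains that of $M_{\min}$'' is well defined. I do not anticipate any obstacle beyond this bookkeeping.
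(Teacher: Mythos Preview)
Your proposal is correct and follows essentially the same approach as the paper: both parts are reduced to \Cref{Vsing} and \Cref{Vstructure} via the entrywise-minimal element $M_{\min}\in\cM(y)$, which is the reduced adjacency matrix of $X$. The only cosmetic difference is that for Part~1 the paper uses the Loewner-order monotonicity $\cV(M_{\min})\preceq \cV(M)$ to propagate nonsingularity from $M_{\min}$ to all $M\in\cM(y)$, whereas you argue graph-theoretically that $M\ge M_{\min}$ preserves connectedness before invoking \Cref{Vsing}; both routes are equally valid.
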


\index{nonsingular!$\partial F(y)$}			
\index{nonsingular!$\partial F(y)$}			

\begin{proof}
\begin{enumerate}
\item
Let $M^\prime \in \cM(y)$ be such that $M_{ij}^\prime = 0$ if
$X_{ij} =0$, i.e., $M^\prime$ is the smallest elementwise.
Note that $\cV(\cdot)$ is a monotonic mapping,
i.e.,~for any $M \in \cM(y)$, we have $M^\prime \leq M$ and 
thus $\cV(M^\prime) \preceq \cV(M)$. Hence we have
\[
\begin{array}{rcll}
\partial F(y) \text{ is nonsingular } & \iff & \cV(M) 
  \text{ is nonsingular $\forall M \in \cM(y)$} & \text{ (by definition)} \\
                                      & \iff & \cV(M^\prime) 
\text{ is nonsingular for smallest $M^\prime$} \\
& \iff & M^\prime \text{ is connected} & \text{ (by \Cref{Vsing})} \\
& \iff & X \text{ is connected}, & 
\end{array}
\]
where the last equivalence follows
since $X_{ij} > 0 \iff  M_{ij}^\prime > 0, \forall ij$ for the smallest $M^\prime$
\item
From the definitions of $\cM(y)$~\cref{My} and the Jacobian in
\cref{eq:jac}, and the fact that $A\geq 0$ with no zero columns,
we conclude that $\partial F(y)$ is a singleton
(differentiability) holds if, and only if, $\cM(y)$ is a singleton. By
definition, this is equivalent to strict complementarity.
Note that if $M \in \cM(y)$, if strict complementarity holds, we have 
$$
X_{ij} = 0  \implies  Z_{ij} > 0  \implies  \Mat(\hat{x}+A^{T}y)_{ij} 
         < 0 \implies  M_{ij} = 0.
$$
(See also~\Cref{Vstructure}.)
\end{enumerate}
\end{proof}

\begin{corollary}\label{main_conv}
Suppose $F(y^{*}) = 0$ and $X^{*} = \Mat(\hat{x}+A^{T}y^{*})_{+}$ is 
connected. Then the semismooth Newton method \cref{semiNM} has
local quadratic convergence to $y^{*}$.
\end{corollary}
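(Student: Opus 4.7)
The plan is to assemble the corollary directly from the three main ingredients already established in this section: strong semismoothness of $F$, the characterization of nonsingularity of the generalized Jacobian via connectedness, and the classical local convergence theorem for semismooth Newton methods.

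First I would recall that the map $F(y) = A(\hat{x}+A^{T}y)_{+}-b$ is strongly semismooth on all of $\R^{2n-1}$. This is immediate from the composition of the affine map $y \mapsto \hat{x}+A^{T}y$ with the metric projection $v \mapsto v_{+}$ onto the nonnegative orthant, which is strongly semismooth by the results of \cite{sun2002semismooth,chen2003analysis} cited above. In particular, $F$ is strongly semismooth at the given point $y^{*}$.

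Next I would invoke \Cref{sys_sing}(1). By hypothesis, $X^{*} = \Mat(\hat{x}+A^{T}y^{*})_{+}$ is connected, so every element of $\partial F(y^{*})$ is nonsingular; that is, $\partial F(y^{*})$ is nonsingular in the sense defined after \Cref{semiNM}. Combined with strong semismoothness, the hypotheses of \Cref{semiNMconv} are satisfied at $y^{*}$.

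Finally, applying \Cref{semiNMconv} to $F$ at $y^{*}$ yields a neighborhood of $y^{*}$ in which the semismooth Newton iteration
\[
y^{k+1} = y^{k} - V_{k}^{-1}F(y^{k}),\quad V_{k} \in \partial F(y^{k}),
\]
is well defined and converges Q-quadratically to $y^{*}$, which is exactly the claim. There is no real obstacle here: the work has already been done in \Cref{Vsing} and \Cref{sys_sing}, and the corollary is essentially a bookkeeping step. The only minor subtlety worth mentioning explicitly is that nonsingularity is needed not just at $y^{*}$ but at the nearby iterates $y^{k}$; this is automatic since the set-valued map $\partial F$ is outer semicontinuous and nonsingularity of $\partial F(y^{*})$ propagates to a neighborhood, so the iteration is well posed for $y^{0}$ sufficiently close to $y^{*}$.
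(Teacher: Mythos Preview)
Your proposal is correct and matches the paper's approach: the paper states this corollary without proof, treating it as an immediate consequence of \Cref{sys_sing} (connectedness of $X^{*}$ gives nonsingularity of $\partial F(y^{*})$) together with \Cref{semiNMconv} (strong semismoothness plus nonsingular generalized Jacobian yields local Q-quadratic convergence). Your write-up simply makes these two ingredients explicit.
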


\Cref{sys_sing} and \Cref{main_conv} show that if differentiability
fails at the optimum,
then strict complementarity fails. This type of degeneracy is
typically tied to ill-conditioning and slow convergence.
Similarly, if the optimum is disconnected, we get problems with
singular generalized Jacobians.
This motivates the next section that deals with finding nonsingular
matrices in the generalized Jacobian.

\section{An All-inclusive Semismooth Newton Method}\label{sec:mod}

In this section we develop an algorithm that allows for the 
cases where the optimal solution $X^*$ is \emph{disconnected}. 
In this case the generalized Jacobian $\partial F(y)$ of the non-linear system \cref{nm_eq} is singular.			
Hence the iterates of the semismooth Newton method \cref{semiNM} are
not well-defined, and the convergence result in \Cref{main_conv}
is not applicable, see e.g.,~\cite{qi1993nonsmooth}.
In fact, the iterate in \cref{semiNM} may not even be defined at all, since every matrix $V \in \partial F(y)$ is singular; see \Cref{sys_sing} below.
Note that this now includes the important cases where the optimal
solution is a permutation matrix, a matrix that \emph{highly} disconnected.
We show that we can move from each iterate $y$ to a point
$y^\prime$ in the same equivalence class,
see~\Cref{def:Equivalence} below, so that we can
find a matrix that is \emph{non}singular
in the generalized Jacobian at $y^\prime$.

We now modify the semismooth Newton method \cref{semiNM} so that
the  iterates in the modified algorithm are well-defined, and the
convergence rate is quadratic even if $X^*$ is \emph{disconnected}.
The main idea for constructing well-defined iterates  is outlined
as follows: 
\begin{quote}
	for any vector $y$, construct an \emph{equivalent} vector $y^\prime$ so
	that there exists at least one nonsingular matrix in $\partial
	F(y^\prime)$ to obtain a well-defined next iterate.
\end{quote}

\subsection{Equivalence Classes}
This section introduces the notion of \emph{equivalence classes} of $y$ 
corresponding to a given dual feasible $X$. 
This is related to the \textdef{normal cone} at $X$.
Our Newton method finds iterates $y$, but we see
below that we are in particular interested in moving between equivalence classes
of $y$. And in particular, we are interested in a special point $y$ in
each equivalence class.

\subsubsection{Preliminaries}

We first define an equivalence relation for a partition of the underlying 
space $\R^{2n-1}$ to use for our modified Newton method.

\index{$y\sim y^\prime$, equivalent}	
\index{equivalent, $y\sim y^\prime$}

\index{$[y]$, equivalence class}
\begin{definition}[\textdef{equivalence class, $[y]$}]
	\label{def:Equivalence}
	Two vectors $y$ and $y^\prime$ in $\R^{2n-1}$ are equivalent, denoted by $y\sim y^\prime$, if 
	\[ 
	(\hat{x}+A^{T}y)_{+} = (\hat{x}+A^{T}y')_{+}.
	\]
	The set of equivalent vectors in $\R^{2n-1}$ is called the \textdef{equivalence class}. We denote the equivalence class to which $y$ belongs to by 
	$$[y]:=\{ y' \in \R^{2n-1} \;|\; y \sim y' \} . $$
\end{definition}

Recall that the nonnegative polar cone of a closed convex set $C$ at 
$w \in C$ is given by $(C-w)^+ = \{v : (c - w)^T v  \geq 0, \; \forall c
\in C\}$. We can show that each equivalence class is actually a
polyhedron that can be viewed in the $y\in \R^{2n-1}$ space, or
equivalently in the $x\in \R^{n^2}$ space.
The associated linear equations and inequalities are given explicitly in the next result.
	
	\begin{lemma}\label{y_poly}
		Let $\tilde y \in \R^{2n-1}$ and $\tilde x = (\hat{x} +
A^{T}\tilde y)_{+}$.
Then the following are equivalent:
\begin{enumerate}
\item
$ {y} \in [\tilde y]$
\item
		$$\begin{array}{rllllll}
			(A^T y)_i &=&(\hat x - \tilde x)_i &\text{ if
}& \tilde x_i>0,\\
			(A^T y)_j &\leq &(\hat{x} - \tilde x)_j &\text{
if }
& \tilde x_j=0.
		\end{array}$$
\item
\[
\tilde x - \hat x - A^T y  \in (\R_+^{n^{2}}-\tilde x)^+.
\]
\end{enumerate}
	\end{lemma}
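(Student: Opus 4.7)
The plan is to prove the three conditions are equivalent through two componentwise equivalences, namely (1) $\Leftrightarrow$ (2) and (2) $\Leftrightarrow$ (3). Both rely only on the pointwise action of the projector $(\cdot)_+$ and on the definition $\tilde x = (\hat x + A^T \tilde y)_+$.

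First, I would prove (1) $\Leftrightarrow$ (2). Since $\tilde x$ is fixed by hypothesis, $y \in [\tilde y]$ reduces to the coordinatewise identity $(\hat x_i + (A^T y)_i)_+ = \tilde x_i$ for every $i$. I split on the support of $\tilde x$: if $\tilde x_i > 0$, then a strictly positive projected value forces $\hat x_i + (A^T y)_i = \tilde x_i$, producing the equality relation in (2); if $\tilde x_j = 0$, then the projected value is zero, which is equivalent to $\hat x_j + (A^T y)_j \leq 0$, producing the inequality in (2). All implications are reversible, so this establishes the equivalence.

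Next, I would prove (2) $\Leftrightarrow$ (3). The main task is to identify the polar cone $(\R_+^{n^2} - \tilde x)^+$ explicitly. Using the test directions $c = M e_k$ with $M \to \infty$ in the defining inequality $(c - \tilde x)^T v \geq 0$ forces $v \geq 0$; taking $c = 0$ then yields $\tilde x^T v \leq 0$, and combined with $v \geq 0$ and $\tilde x \geq 0$ this forces $v_i = 0$ on the support of $\tilde x$. Conversely, any nonnegative $v$ vanishing on that support satisfies $(c - \tilde x)^T v = c^T v \geq 0$ for every $c \geq 0$. Substituting $v := \tilde x - \hat x - A^T y$ into these two conditions recovers exactly the equality on $\{i : \tilde x_i > 0\}$ and the inequality on $\{j : \tilde x_j = 0\}$ from (2).

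The only step demanding any real work is the polar-cone identification in the second part; after that the argument is just the scalar fact that $a_+ = b$ iff either $b > 0$ and $a = b$, or $b = 0$ and $a \leq 0$, applied coordinatewise. I do not expect serious obstacles: the lemma is essentially a restatement, in primal and dual-cone language, of the projection characterization that already defines $[\tilde y]$.
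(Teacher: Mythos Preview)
Your proof is correct. The route differs from the paper's in the order of implications: the paper establishes (1) $\Leftrightarrow$ (3) first, by observing that $y \in [\tilde y]$ is equivalent to $\tilde x$ being the minimizer of $\tfrac{1}{2}\|x - \hat x - A^T y\|^2$ over $\R_+^{n^2}$ and then invoking the Rockafellar--Pshenichnyi optimality condition, which is precisely the polar-cone inclusion in (3); only afterward does it unpack (3) into (2) via the same polar-cone identification you carry out. You instead prove (1) $\Leftrightarrow$ (2) by a direct coordinatewise analysis of $(\cdot)_+$ and then (2) $\Leftrightarrow$ (3). Both arguments hinge on the explicit description of $(\R_+^{n^2} - \tilde x)^+$, so the substantive content is identical; your version is slightly more self-contained since it does not quote a general optimality theorem, while the paper's version makes the variational structure (projection as constrained minimization) explicit.

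One minor remark: your coordinatewise computation in (1) $\Leftrightarrow$ (2) actually yields $(A^T y)_i = \tilde x_i - \hat x_i$ on the support of $\tilde x$ and $(A^T y)_j \le -\hat x_j$ off it, which is the negative of what is printed in item (2). This is a sign typo in the lemma statement rather than a flaw in your reasoning --- the paper's own proof, read carefully, lands on the same $(\tilde x - \hat x)$ sign.
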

	\begin{proof}
		A vector $y$ is contained in $[\tilde y]$ if, and only
if, $\tilde x$ is the optimal solution of the following  optimization problem 
		\begin{equation}\label{unique_opt2}
			\tilde x = \argmin_x \left\{ \frac 12 \|x-\hat
x-A^T {y}\|^2  :  x \in \R_+^{n^{2}}\right\}.
		\end{equation}
It follows from the classical Rockafellar-Pshenichnyi optimality
condition for \cref{unique_opt2}, that $\tilde x$ is an optimal solution
if, and only if, the gradient of the objective function at $\tilde{x}$ satisfies
		$$\tilde x - \hat x - A^T {y}  \in (\R_+^{n^{2}}-\tilde x)^+.$$
		This yields the third item. The second item follows from the fact that a vector $v \in (\R_{+}^{n^{2}}-\tilde x)^+$ is equivalent to
$v_{i} = 0$ for $\tilde x_{i} > 0$ and $v_{i} \geq 0$ for $\tilde x_{i} = 0$.
	\end{proof}

We now introduce some notation in order to facilitate the discussions 
about the disconnected case.
Let $y\in \R^{2n-1}$ and $X = \Mat\left(\hat{x}+A^{T}y\right)_{+} \in \R^{n\times n}$. Suppose that $X$ is disconnected with the following block diagonal structure:

%{\color{red}(Mention explicitly the results where this restriction is applied. Because some of the result is more general.)In this section, we assume that $X$ does not contain any zero rows or columns to simplify our presentation. The main results remains correct without this restriction. Moreover, this assumption is always satisfied for all $X$ near the feasible set. }
%
%\begin{assump}\label{ass_zero}
%	The matrix $X = \Mat\left(\hat{x}+A^{T}y\right)_{+}$ does not contain any zero rows or columns.
%\end{assump}

%when $y$ is near to the solution set of \cref{nm_eq}. 

%\begin{noteH}
%How does one know $0$??? Is roundoff important? ignored?
%\end{noteH}

\index{$X^{k}$, $k$-th diagonal block}
\index{$X^{ij}$, $(i,j)$-th off-diagonal block}
\index{$K$, blocks in $X$}
\index{blocks in $X$, $K$}

\begin{equation}\label{Xblk}
	X = \Blkdiag(X^{1},\ldots,X^{K}),
\end{equation}
where $X^{i} \in \R^{m_{i}\times n_{i}}$ is connected for
all $i=1,\ldots,K$. We write $y= 
\begin{pmatrix}  c\cr r\end{pmatrix}  
\in \R^{2n-1}$ correspondingly with the labels
$$\begin{array}{lll}
	c =  \begin{pmatrix} c^{1}\cr \vdots\cr c^{K}\end{pmatrix}\in \R^{n}, 
	&\text{ with }c^{i} \in \R^{n_{i}}, & \text{ for } i=1,\ldots,K,\\
	r = \begin{pmatrix} r^{1}\cr \vdots\cr r^{K}\end{pmatrix} \in \R^{n-1},
	&\text{ with } r^{i} \in \R^{m_{i}}, &\text{ for } i=1,\ldots,K-1, \text{ and } r^{K} \in \R^{m_{K}-1}.
\end{array}$$

The partition and its relation with $c^{i}$ and $r^{i}$ can be visualized as
\begin{equation}\label{yrc}
	X = \bordermatrix{& (c^{1})^T & \cdots & (c^{K})^T \cr
		r^{1}	& X^{1} &	   \cdots & X^{1,K}=0	\cr
		\vdots	& \vdots &	\ddots	  & \vdots	\cr
		r^{K}	& X^{K,1}=0 &	 \cdots & X^{K}	\cr  },
\end{equation}
where the off-diagonal blocks $X^{ij}$ $(i\neq j)$ are zero due to the disconnectedness assumption. Each diagonal block $X^{i}$ may be viewed as a smaller doubly stochastic matrix, if it is a square matrix. This motivates us to define the vectors by pairing $c^i$ and $r^i$:
\begin{equation}\label{ycr2}
	\begin{array}{lrlrll}
		\mathscr{Y}^{i} &=& \begin{pmatrix}
			c^{i} \\
			r^{i}
		\end{pmatrix} &\in& \R^{m_{i}+n_{i}} &\text{ for } i=1,\ldots,K-1, \vspace{.1cm} \\
		\mathscr{Y}^{K} &=& \begin{pmatrix}
			c^{K} \\
			r^{K}
		\end{pmatrix} &\in& \R^{m_{K}+n_{K}-1}. &
	\end{array}
\end{equation}
We use calligraphic letter $\mathscr{Y}^{i}$ to distinguish it from the $i$-th iterate $y^{i}$ in the Newton method \cref{semiNM} and \Cref{mo}.
%abuse the notation slightly here, as the same symbol $y^{k}$ is used to denote the $k$-th iteration in the Newton algorithm \cref{semiNM} and \Cref{mo}. 

We note that each diagonal block $X^{k}$ is completely determined by the vector $\mathscr{Y}^{k}$, i.e.,
\[
X_{ij}^k 
= \left( \hat{X}_{ij}^k  + c_j^k + r_i^k\right)_+
= \left( \hat{X}_{ij}^k + \mathscr{Y}^k_j + \mathscr{Y}^k_{n_k+i}  \right)_+ .
\]			
We also note that if two vectors $y$ and $\tilde{y}$ are equivalent, then the corresponding matrices $X = \Mat\left(\hat{x}+A^{T}y\right)_{+}$ and $\tilde{X} = \Mat\left(\hat{x}+A^{T}\tilde{y}\right)_{+}$ admit the same partition \cref{yrc}. 
Therefore, using the equivalence relation defined in \Cref{def:Equivalence}, it is unambiguous to speak of the  $(i,j)$-th off-diagonal block $X^{ij}$ or $(i,i)$-th diagonal block $X^{i}$ when it comes to the same equivalence class. 
%Therefore, if we are talking about the vectors in the same equivalence class, then there is no ambiguity to refer their $(i,j)$th block.

\index{$\mathscr{Y}^i =(c^{i},r^{i})$}				
\index{$Y^{ij}$}
\index{$M^{ij}$}

Given $y\in \R^{2n-1}$, we list the notations to remind readers;
\begin{equation}\label{notations}
	\text{Each $y$ gives rise to }  \ 
	\begin{cases}
		Y=Y_y = \Mat\left(\hat{x}+A^{T}y\right) ,  \\
		X=X_y = \Mat\left(\hat{x}+A^{T}y\right)_+ ,\\
		M \in \cM(y),\\ 
		\cV(M) = A \Diag (\vec(M)) A^T \in \partial F(y),
	\end{cases}
\end{equation}
where we ignore the subscripts when the meaning is clear.
We partition the matrices $Y$ and $M$ in the same way as $X$ in \cref{yrc}, respectively. Denote by $Y^{ij}$ and $M^{ij}$ the $(i,j)$-th block of $Y$ and $M$, respectively. It is worthwhile to note that the off-diagonal blocks $Y^{ij}$ ($i\neq j$) are always \emph{non-positive} due to the block-diagonal structure of $X$.

This notation is extended verbatim to any other vectors in $\R^{2n-1}$. 
%This should not cause confusion. 
For example, if $\tilde{y} \in \R^{2n-1}$, then the symbols $\tilde{Y}$ and $\tilde{Y}^{ij}$ are unambiguously defined just as for $y$ above. In what follows, we will use these notations directly without defining them again.

\subsubsection{Uniqueness}	

\index{$U$}						
\index{row index set for block $X^k$, $R_{k}$}		
\index{column index set for block $X^k$, $C_{k}$}		
\index{$R_{k}$, row index set for block $X^k$}		
\index{$C_{k}$, column index set for block $X^k$}		

In this section we present sufficient conditions for the equivalence
class to be a singleton. We first note that uniqueness of the optimum $X^*$ means that
the solution set of the system \cref{nm_eq} is an equivalence class.

\begin{lemma}
	The solution set $ \{ y \;|\; F(y) = 0\}$ of the system \cref{nm_eq} is an equivalence class.
\end{lemma}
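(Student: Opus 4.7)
The plan is to show the two inclusions: every element of $\{y : F(y)=0\}$ lies in a single equivalence class, and conversely that class is contained in the solution set. Both directions hinge on the uniqueness of the primal optimum $x^*$ established in \Cref{thm:optcondG}.

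First I would pick any two solutions $y, y' \in \{y : F(y)=0\}$ and set $x = (\hat{x}+A^{T}y)_+$ and $x' = (\hat{x}+A^{T}y')_+$. By \Cref{thm:optcondG}, each such pair $(x,y)$ (resp.\ $(x',y')$) satisfies the optimality characterization for \cref{eq:mainprob}, so both $x$ and $x'$ are optimal solutions. Since \cref{eq:mainprob} is a projection onto a closed convex set, the optimum is unique, giving $x = x' = x^*$. Thus $(\hat{x}+A^{T}y)_+ = (\hat{x}+A^{T}y')_+$, which is precisely the definition of $y \sim y'$. Hence the solution set is contained in a single equivalence class.

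For the reverse inclusion, fix some $y^* \in \R^{2n-1}$ with $F(y^*) = 0$; this exists by \Cref{thm:optcondG}. For any $y \in [y^*]$, the defining relation $(\hat{x}+A^{T}y)_+ = (\hat{x}+A^{T}y^*)_+ = x^*$ gives
\[
F(y) = A(\hat{x}+A^{T}y)_+ - b = Ax^* - b = 0,
\]
where the last equality is primal feasibility of $x^*$. Thus $[y^*] \subseteq \{y : F(y)=0\}$.

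Combining the two inclusions shows that the solution set coincides with $[y^*]$ and is therefore an equivalence class. There is no real obstacle here; the content is a bookkeeping exercise on top of the uniqueness statement in \Cref{thm:optcondG}, and the only thing to be slightly careful about is that the solution set is nonempty (which is guaranteed by the existence half of \Cref{thm:optcondG}).
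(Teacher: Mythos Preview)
Your proof is correct and follows the same approach as the paper, which simply states that the result follows by definition from the uniqueness of the optimum $X^*$. You have unpacked this one-line argument into the natural two-inclusion verification, which is exactly what the paper's remark is gesturing at.
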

\begin{proof}
	The proof follows by definition,
	from the fact that the optimum $X^*$ is unique.
\end{proof}

{Although the optimal solution of the primal problem \cref{eq:mainprob} is unique, 
	the solution of the optimality conditions in \cref{eq:optcondG} for the dual
	variable $y$ is a compact, convex, nonempty set, 
	but is \emph{not} necessarily a singleton set in general. 
	The next result implies that we obtain a unique solution to
	\cref{eq:optcondG} when the unique primal optimal solution to
	\eqref{dsm:main} is \textdef{connected}.}

\index{unique!entry $y_i$}	
\index{unique!set}	
\begin{theorem}
	\label{unique_alg}
	Let $\tilde{y} \in \R^{2n-1}$ be given.	If $\Mat\left(\hat x+A^T\tilde{y}\right)_+$ is connected, then the equivalence class $[\tilde{y}]$ is a singleton.
\end{theorem}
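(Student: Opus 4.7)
The plan is to characterize membership in the equivalence class $[\tilde{y}]$ as a linear system indexed by the nonzero entries of $\tilde{X} := \Mat(\hat{x}+A^T\tilde{y})_+$, and then exploit connectedness of the bipartite graph $G$ of $\tilde{X}$ to propagate a single boundary condition to all coordinates.

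First I would observe that if $y \in [\tilde{y}]$, then by definition $(\hat{x}+A^T y)_+ = \tilde{x}$. At every index $(i,j)$ with $\tilde{X}_{ij} > 0$, the plus projection acts as the identity, so $(A^T y)_{ij} = \tilde{x}_{ij} - \hat{x}_{ij}$. Since the right-hand side is independent of the representative $y \in [\tilde{y}]$ chosen, one gets $(A^T y)_{ij} = (A^T \tilde{y})_{ij}$ (this observation is codified in \Cref{y_poly}). Using the explicit form of $A^T$ recorded in \cref{Xrc}, the equality unfolds to $r_i + c_j = \tilde{r}_i + \tilde{c}_j$ for each edge $(i,j)$ of $G$ with $i \neq n$, and to $c_j = \tilde{c}_j$ for each edge $(n,j)$.

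Writing $d = y-\tilde{y} = (\delta c,\delta r)$ and adopting the convention $\delta r_n := 0$ (reflecting the discarded row-$n$ sum constraint), both cases collapse into the single uniform relation
\[
\delta r_i + \delta c_j = 0 \quad \text{for every edge } (i,j) \in G.
\]
I would then run a breadth-first propagation on $G$ starting at the row vertex $n$: the seed $\delta r_n = 0$ forces $\delta c_j = 0$ at every column neighbor $j$ of row $n$; each vanishing $\delta c_j$ then forces $\delta r_i = 0$ at every row neighbor $i$ of column $j$; and so on along any walk in $G$. Connectedness of $G$ guarantees that every vertex is eventually reached, so $\delta c = 0$ and $\delta r = 0$, giving $y = \tilde{y}$.

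The main obstacle is ensuring that the row vertex $n$ actually participates in $G$, i.e., that row $n$ of $\tilde{X}$ has at least one nonzero entry, so that the propagation can start at that anchor. This is exactly what connectedness provides: a zero row $n$ would leave the corresponding vertex isolated and would allow $\tilde{X}$ to be permuted into the block form forbidden by \Cref{def:cm}. Hence connectedness supplies both the anchor ($\delta r_n = 0$ is a genuine local constraint at a vertex of $G$) and the propagation mechanism (every other vertex is reachable from it), and the two combine to pin down $y$ uniquely.
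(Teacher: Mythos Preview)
Your proof is correct and follows essentially the same route as the paper's elementary argument: both use \Cref{y_poly} to extract the linear equations at positive entries of $\tilde{X}$, anchor at row $n$ (where the discarded constraint removes the free variable), and then propagate uniqueness through the connected bipartite graph. Your phrasing in terms of the difference vector $d=(\delta c,\delta r)$ with the convention $\delta r_n:=0$ is a clean way to unify the two cases in \cref{Xrc}, but the underlying mechanism---walk the graph from the row-$n$ anchor using $\delta r_i+\delta c_j=0$ along each edge---is exactly the ``existence/extension'' induction the paper carries out.
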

\begin{proof}
	Recall that the equivalence class can be defined by the linear
equations and inequalities in \Cref{y_poly}. Applying the first proof in
\Cref{Vsing}, if $X$ is connected, then the columns of $A$ associated
with $x_{i} > 0$ form a basis of $\R^{2n-1}$. Therefore, the equations
in \Cref{y_poly} determine a unique solution. This implies that $[\tilde
y]$ is a singleton.
	
	We also provide an elementary proof below for the sake of self-containment.
Assume $X$ is connected, and let
$y = \begin{pmatrix} c\cr r\end{pmatrix} \in \R^{2n-1}$
be an element in $[\tilde{y}]$. The entry $y_{i}$ is said to be unique, if $\{y_{i} \;|\; y \in [\tilde{y}]\}$ has exactly one element. The subsets $R,C \subseteq \{1,\ldots,n\}$ are called unique, if the entries $r_{i}$ for $i \in R \backslash \{n\}$ and the entries $c_{j}$ for $j \in C$ are unique. We show that there exist unique subsets $R,C \subset \{1,\ldots,n\}$ and they can be extended so that $R = C =\{1,\ldots,n\}$.
\begin{enumerate}
	\item The existence: Let $R = \{n\}$ and $C \subseteq \{1,\ldots,n\}$ be such
	that $j \in C$ if, and only if, $X_{n,j} >0$. Since $X$ is connected, $C$ cannot be an empty set. As $X_{n,j} > 0$ for every $j \in C$, we have $X_{n,j} = (\hat{X}_{n,j} + c_{j})_{+} = \hat{X}_{n,j} + c_{j}$, see \cref{Xrc}. Thus the entries $c_{j}$ for $j \in C$ are uniquely determined. This shows that the subsets $R$ and $C$ are unique.
	
	\item The extension: Let the subsets $R,C \subseteq \{1,\ldots,n\}$ be unique.
	Since $X$ is connected, there exists at least one non-zero entry in
	$X_{\bar{R},C}$ or $X_{R,\bar{C}}$, see the paragraph after
	\Cref{blk_X}. Assume that $X_{\bar{R},C}$ contains a non-zero entry. Let
	$i \in \bar{R}$ be the row index associated with this non-zero entry. Then $X_{i,j} > 0$ for some $j \in C$, and this yields
	$$X_{ij} = (\hat{X}_{ij} + r_{i} + c_{j})_{+} = \hat{X}_{ij} + r_{i} + c_{j}.$$
	As $C$ is unique, $c_{j}$ is unique as $j \in C$, and thus $r_{i}$ is also unique. It follows that the subsets $R_{+} = R \cup \{i\}$ and $C_{+} = C$ are unique. The case when $X_{R,\bar{C}}$ contains some non-zero entries is similar.
\end{enumerate}
Therefore, $R = C = \{1,\ldots,n\}$ are unique, and this shows that $[\tilde{y}]$ has a unique solution.
\end{proof}

Note that \Cref{unique_alg} does not assume that $X$ is a doubly stochastic matrix. The uniqueness of the solution to the system \cref{eq:optcondG} follows directly from \Cref{unique_alg} as a special case when $X$ is a doubly stochastic matrix.

\begin{corollary}\label{unique}
	If the optimal solution $X^*$ of \cref{eq:mainprob} is connected, then 
	the solution $y^*$ to the system \cref{eq:optcondG} is unique.
	\qed
\end{corollary}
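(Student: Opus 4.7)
The plan is simply to invoke the preceding theorem with $\tilde y = y^*$. First, I would pick any $y^*$ satisfying $F(y^*) = 0$; existence of such a $y^*$ is guaranteed by \Cref{thm:optcondG}. By the same theorem, the (unique) primal optimum is $X^* = \Mat(\hat x + A^T y^*)_+$.

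Next, I would recall the lemma immediately preceding \Cref{unique_alg}, which states that the solution set $\{y : F(y)=0\}$ is exactly one equivalence class. Since $y^*$ lies in this solution set, that class is precisely $[y^*]$.

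Finally, I would apply \Cref{unique_alg} with $\tilde y = y^*$: the hypothesis that $\Mat(\hat x + A^T y^*)_+ = X^*$ is connected is exactly our standing assumption, so the conclusion is that $[y^*]$ is a singleton. Combining with the previous step, the solution set $\{y : F(y)=0\} = [y^*] = \{y^*\}$, which gives uniqueness.

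There is no real obstacle here: the corollary is a direct specialization of \Cref{unique_alg} to the case where the matrix in question is the doubly stochastic optimum $X^*$, together with the observation that the dual solution set and the equivalence class $[y^*]$ coincide. The only thing to be careful about is making explicit the link between the abstract statement of \Cref{unique_alg} (for an arbitrary $\tilde y$) and the specific setting of the KKT system \cref{eq:optcondG}, which is exactly what the preceding lemma provides.
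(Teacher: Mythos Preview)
Your proposal is correct and is essentially the same argument the paper intends: the corollary is stated with a bare \qed, and the surrounding text says it ``follows directly from \Cref{unique_alg} as a special case when $X$ is a doubly stochastic matrix.'' Your write-up is just a more explicit version of this, additionally invoking the lemma that the solution set of $F(y)=0$ is an equivalence class to identify that class with $[y^*]$.
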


	\begin{remark}
		The converse direction in \Cref{unique_alg} doesn't hold. Assume that $X$ and $\hat{X}$ are both $2$ by $2$ identity matrices. Then $[y]$ contains vectors satisfying the system
		$$\begin{bmatrix}
			1 \\
			0 \\
			0 \\
			1
		\end{bmatrix} = \left(\begin{bmatrix}
			1 \\
			0 \\
			0 \\
			1
		\end{bmatrix} + \begin{bmatrix}
			1 & 0 & 1 \\
			1 & 0 & 0 \\
			0 & 1 & 1 \\
			0 & 1 & 0 \\
		\end{bmatrix}y\right)_{+} \text{ with variable } y \in \R^{3}.$$
		This system can be written equivalently as
		$$\begin{array}{ccc}
			\begin{array}{rrl}
				y_{1} + y_{3} &=& 0,\\
				y_{1} &\leq& 0, \\
				y_{2} + y_{3} &\leq& 0, \\
				y_{2} &=& 0. 
			\end{array}
		\end{array}$$
		We can easily derive that $y_{1} = y_{2} = y_{3} = 0$. Thus, there is a unique solution $y$ to the system even $X$ is disconnected.
	\end{remark}

\subsubsection{Polyhedron Description}	
The polyhedron characterization in \Cref{y_poly} does not exploit the structures in $A$. In this section, we provide a different characterization using the blocks in a disconnected $X$. This alternative characterization enables us to find a vertex of $[y]$ efficiently and prove the convergence of our algorithm.

Consider the vectors $c^{k}$ and $r^{k}$ associated with the $k$-th
diagonal block $X^{k}, k=1,\ldots, K$. If we add a constant to $c^{k}$ and subtract the same constant from $r^{k}$, then the diagonal block $X^{k}$ remains the same. We define a matrix $U$ associated with this operation as follows. Let $R_{k}$ and $C_{k}$ be the row and column indices corresponding to the $k$-th diagonal block of $X$, respectively. Define the matrix
\begin{equation}\label{shiftV}
	U = \begin{bmatrix}u^{1} & \cdots & u^{K}\end{bmatrix} \in \R^{2n-1 \times K},
\end{equation}
where the non-zero elements in each column $u^{k} \in \R^{2n-1}$ is given by
%				\begin{equation}\label{shiftv}
%					u_{i}^{k} = \begin{cases}
%						1 & \text{ if } i \in R_{k}, \\
%						-1 & \text{ if } i \in C_{k}, \\
%						0 & \text{ otherwise}.
%					\end{cases}
%				\end{equation}
\begin{equation}\label{shiftv}
	\begin{array}{lrrl}
		u_{i}^{k} &=& -1 & \text{ for } i \in C_{k}, \\
		u_{n+i}^{k} &=&  1 & \text{ for } i \in R_{k}.
	\end{array}
\end{equation}				
It is clear that the aforementioned operation can be described by $y + \lambda_{k}u^{k}$, for some $\lambda_{k}\in \R$.

In \Cref{yV} below, we show that equivalent vectors in $[y]$ are in
the span of the first $K-1$ columns of the matrix $U$.

\begin{lemma}\label{yV}
	If $y \sim \tilde{y}$, then 
	\[
	\tilde{y} - y \in \range (\begin{bmatrix}u^{1} & \cdots & u^{K-1}\end{bmatrix}),
	\]
	the span of the first $K-1$ columns of $U$.
\end{lemma}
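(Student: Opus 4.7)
The plan is to translate the equivalence $y\sim\tilde y$ into coordinate-wise identities on the entries inside the positive parts, and then exploit the connectedness of each diagonal block $X^k$ to constrain the difference $\tilde y - y$. Writing $\Delta c = \tilde c - c$ and $\Delta r = \tilde r - r$ (with the convention that $r^K$ has an implicit zero at the missing $n$-th row coordinate), the equivalence means $\Mat(\hat x + A^Ty)_+ = \Mat(\hat x + A^T\tilde y)_+ = X$, and $X$ has the block-diagonal form of \cref{yrc}.

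The first step is to examine, for each block $k$, the indices $(i,j)$ with $X^k_{ij}>0$. At such an index both $\hat X^k_{ij}+r^k_i+c^k_j$ and $\hat X^k_{ij}+\tilde r^k_i+\tilde c^k_j$ are strictly positive and equal to $X^k_{ij}$, so $\Delta r^k_i + \Delta c^k_j = 0$. (At indices where $X^k_{ij}=0$ the inequality constraints of \Cref{y_poly} are automatic and give no information on $\Delta y$.) The key step is then a propagation argument along the bipartite graph of positive entries of $X^k$: since $X^k$ is connected by construction of the block partition, this bipartite graph is connected, and the relation $\Delta r^k_i = -\Delta c^k_j$ along edges forces $\Delta r^k$ to be a constant vector $\lambda_k e$ and $\Delta c^k$ to be $-\lambda_k e$ for some scalar $\lambda_k\in\R$. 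Reading off the formula \cref{shiftv}, this scalar multiple contribution is precisely $\lambda_k u^k$.

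Finally, I handle the $K$-th block separately, where row $n$ of $X^K$ has no associated dual variable (equivalently $\Delta r^K_n = 0$ by definition). Connectedness of $X^K$ then propagates this forced zero throughout: $\Delta r^K$ must be identically zero, and hence $\Delta c^K = 0$ as well, so $\lambda_K=0$. Summing over $k$, the difference decomposes as $\tilde y - y = \sum_{k=1}^{K-1}\lambda_k u^k \in \range([u^1\ \cdots\ u^{K-1}])$, which is the claim. The main delicate point will be the bookkeeping around the missing row $n$ in the $K$-th block and the careful verification that propagation across the bipartite graph does not rely on $X^k$ being square—only on connectedness, which is available by hypothesis. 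The inequality constraints at zero entries require no additional argument, as noted above.
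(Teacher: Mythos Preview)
Your proposal is correct and follows essentially the same approach as the paper's proof: both arguments use the fact that on entries where $X^k_{ij}>0$ the dual variables satisfy exact linear relations, then invoke connectedness of the bipartite graph of $X^k$ to force the difference on each block to be a constant shift $\lambda_k u^k$, and finally use the absent row-$n$ variable in the $K$-th block to pin $\lambda_K=0$. The only cosmetic difference is that the paper packages the propagation step as a reference back to the uniqueness argument of \Cref{unique_alg} (extended to non-square blocks), whereas you spell out the bipartite-graph propagation directly.
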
 
\begin{proof}
	The statement in \Cref{unique_alg} can be extended trivially to non-square
	matrices. Note that each diagonal block $X^{k}$ is connected. If we fix an element in $r^{k} \in \R^{m_{k}}$, say the last entry $r_{m_{k}}^{k}$, then we can use the same argument in \Cref{unique_alg}  to see that all other entries in $\mathscr{Y}^{k} = \begin{pmatrix}
		c^{k} \\
		r^{k}
	\end{pmatrix}$ are uniquely determined. From this, we can deduce that $c^{k} = \tilde{c}^{k} + \lambda_{k}e$ and $r^{k} = \tilde{r}^{k} - \lambda_{k}e$ for some constant $\lambda_{k}$ for every $k=1,\ldots,K-1$. 
	
	Since the last block $X^{K}$ is connected, we can apply \Cref{unique_alg} to $X^{K}$. This shows that $\tilde{\mathscr{Y}}^{K} = \mathscr{Y}^{K}$ and thus $\lambda_{K} = 0$. Putting together, this implies that $\tilde{y} = y + U\lambda$ for some $\lambda \in \R^{K}$ with $\lambda_{K} = 0$ using the definition of $U$.
\end{proof}

The following result is a direct consequence of \Cref{yV}. It states that the associated diagonal blocks of $Y$ and $\tilde{Y}$ remain the same for the equivalent vectors $y,\tilde{y}$.
\begin{corollary}\label{Yblk}
	If $y \sim \tilde{y}$, then $\mathscr{Y}^{K} = \tilde{\mathscr{Y}}^{K}$ and $Y^{k} = \tilde{Y}^{k}$ for $k=1,\ldots,K$.
\end{corollary}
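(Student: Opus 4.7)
The plan is to apply \Cref{yV} directly and trace the effect of the shift through the definition of the block decomposition. First I would invoke \Cref{yV} to obtain the representation
\[
\tilde{y} = y + \sum_{k=1}^{K-1} \lambda_k u^k
\]
for scalars $\lambda_1,\ldots,\lambda_{K-1}$, and I would adopt the convention $\lambda_K = 0$. By \cref{shiftv}, each $u^k$ for $k<K$ subtracts $1$ from $c_j$ for $j \in C_k$ and adds $1$ to $r_i$ for $i \in R_k$, and is zero on every other coordinate. Thus the transformation $y\mapsto \tilde y$ acts coordinate-wise as $\tilde c_j = c_j - \lambda_k$ for $j\in C_k$ and $\tilde r_i = r_i + \lambda_k$ for $i\in R_k$.

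For the first assertion $\mathscr{Y}^K = \tilde{\mathscr{Y}}^K$: since the index sets $\{C_k,R_k\}_{k=1}^{K}$ form a partition of $\{1,\ldots,n\}$ arising from the block-diagonal form~\cref{yrc}, the supports of the vectors $u^k$ for $k<K$ are disjoint from $C_K \cup R_K$. Combined with $\lambda_K = 0$, this shows that none of the summands in $\tilde y - y$ touches any entry of $c^K$ or $r^K$, and so $\tilde c^K = c^K$ and $\tilde r^K = r^K$.

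For the second assertion $Y^{k} = \tilde{Y}^{k}$: I would compute $\tilde Y^k_{ij}$ for $i\in R_k$ and $j\in C_k$ using~\cref{Xrc}. When $i\neq n$, I get
\[
\tilde Y^k_{ij} = \hat X_{ij} + \tilde r_i + \tilde c_j = \hat X_{ij} + (r_i + \lambda_k) + (c_j - \lambda_k) = Y^k_{ij},
\]
and the $\lambda_k$ contributions cancel exactly by design of $u^k$. When $i = n$, necessarily $k=K$ in the labelling of~\cref{yrc}, and $\lambda_K = 0$ immediately gives $\tilde c_j = c_j$ and hence $\tilde Y^K_{nj} = \hat X_{nj} + c_j = Y^K_{nj}$.

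The only subtlety is the asymmetric treatment of the last row (for which there is no associated $r$-variable), but this is precisely why $\lambda_K=0$ in \Cref{yV}: the convention automatically protects the block containing row $n$ and makes the cancellation argument uniform. There is no real obstacle; the result is essentially a bookkeeping consequence of the shift structure built into $U$.
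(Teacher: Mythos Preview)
Your proof is correct and follows exactly the route the paper intends: the paper states this corollary as a direct consequence of \Cref{yV} without further argument, and your expansion of that consequence---writing $\tilde y - y$ in the span of $u^1,\ldots,u^{K-1}$, noting the supports avoid the $K$-th block, and observing the $\pm\lambda_k$ cancellation on each diagonal block---is precisely the bookkeeping the paper leaves implicit.
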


%{\color{red}(swap $y$ and $\tilde{y}$???)}

We now show that every equivalence class has a polyhedral representation via $U$. 
%The next result shows that the equivalence class $[y]$ can be characterized by a transformation associated with the matrix $U$.	

%{\color{red}(swap the $y$)}
\begin{theorem}\label{equi}
	Let $\tilde{y} \in \R^{2n-1}$. The equivalence class $[\tilde{y}]$ is a polyhedron given by
	\begin{equation}
		\label{eq:equi_poly}
		[\tilde{y}] = \{ y \in \R^{2n-1} \;|\;  y = \tilde{y} + U\lambda \text{ for some } \lambda \in \R^{K}, \lambda_{K} = 0 \text{ and } Y^{ij} \leq 0 \text{ for } i\neq j\}, \footnote{The redundant variable $\lambda_{K}$ in \cref{eq:equi_poly} is included to simplify the proof in \Cref{thm_close0}.}
	\end{equation}					
	where $Y^{ij}$ is the $(i,j)$-the block of $Y = \Mat(\hat x+A^Ty)$ 
	with respect to the partition associated with $\tilde{y}$ as defined in \cref{Xblk}.
\end{theorem}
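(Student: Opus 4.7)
The plan is to prove the set equality by establishing both inclusions. The forward inclusion $[\tilde y] \subseteq \{y : y = \tilde y + U\lambda,\ \lambda_K=0,\ Y^{ij}\leq 0 \text{ for } i\neq j\}$ follows almost immediately from the results already in hand: given $y \sim \tilde y$, \Cref{yV} yields $y - \tilde y = U\lambda$ for some $\lambda \in \R^K$ with $\lambda_K = 0$, while the defining equality $X_y = X_{\tilde y}$, combined with the block-diagonal form in \cref{yrc}, forces the off-diagonal blocks $X_y^{ij} = (Y^{ij})_+ = 0$ for $i \neq j$, which is equivalent to $Y^{ij} \leq 0$.

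For the reverse inclusion, I would take an arbitrary $y = \tilde y + U\lambda$ with $\lambda_K = 0$ satisfying $Y^{ij} \leq 0$ for all $i \neq j$, and verify $y \sim \tilde y$ by checking $X_y = X_{\tilde y}$ block by block. For a diagonal block $(k,k)$ with $k < K$, the definition of $u^k$ in \cref{shiftv} shows that shifting $y$ by $\lambda_k u^k$ subtracts $\lambda_k$ from each component of $c^k$ and adds $\lambda_k$ to each component of $r^k$; since $(\hat x + A^T y)_{ij} = \hat X_{ij} + r_i + c_j$ for $(i,j) \in R_k \times C_k$ (see \cref{Xrc}), the sum $r_i + c_j$ is preserved, so $Y_y^k = Y_{\tilde y}^k$ and hence $X_y^k = X_{\tilde y}^k$. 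For the last diagonal block, the condition $\lambda_K = 0$ ensures no shift is applied to $c^K$ or $r^K$, again giving $X_y^K = X_{\tilde y}^K$. For an off-diagonal block, $X_y^{ij} = (Y_y^{ij})_+ = 0 = X_{\tilde y}^{ij}$ by the hypothesis and the block-diagonal structure induced by $\tilde y$. Combining all blocks yields $X_y = X_{\tilde y}$, i.e., $y \in [\tilde y]$.

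The main technical subtlety is the asymmetric treatment of the last block. Because $r \in \R^{n-1}$, the component $r^K$ lives in $\R^{m_K-1}$ rather than $\R^{m_K}$, and for the last row of $X$ the formula \cref{Xrc} lacks the $r_i$ term. This is precisely why $\lambda_K$ must vanish: a nonzero $\lambda_K$ would shift the entries of $c^K$ without a compensating shift on a missing $r_n$, and the last row of $X$ would change. Apart from this bookkeeping and keeping the sign conventions in \cref{shiftv} straight, so that they cancel on diagonal blocks and need the explicit sign inequality on off-diagonal blocks, the argument is routine.
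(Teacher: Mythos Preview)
Your proposal is correct and follows essentially the same approach as the paper: the forward inclusion via \Cref{yV} together with $(Y^{ij})_+=0$, and the reverse inclusion by checking that the $U\lambda$-shift preserves the diagonal blocks while the hypothesis $Y^{ij}\le 0$ kills the off-diagonal ones. Your write-up is in fact more explicit than the paper's (which simply says ``by the definition of $U$, the matrices $X$ and $\tilde X$ have the same diagonal blocks''), and your closing remark on why $\lambda_K$ must vanish is a helpful clarification.
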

\begin{proof}
	Let $y$ be a vector on the right hand side set from \eqref{eq:equi_poly}.
	By the definition of $U$, the matrices $X$ and $\tilde{X}$ have the same diagonal blocks. Since $X = (Y)_{+}$ and the off-diagonal blocks $Y^{ij} \leq 0$ are non-positive, the off-diagonal blocks $X^{ij} = (Y^{ij})_{+} = 0$. This shows that $X= \tilde{X}$ and thus $y \in [\tilde{y}]$.  
	
	Conversely, for any vector $y \in [\tilde{y}]$, we have $y = \tilde{y} + U\lambda$ and $\lambda_{K} = 0$ by \Cref{yV}. Since $y \in [\tilde{y}]$,  we must have $Y^{ij} \leq 0$ for $i\neq j$.
	Therefore, $y$ is contained in the set on the right-hand-side. 
\end{proof}

\subsubsection{Vertices}	

\index{maximal!$\bar{s}$}				
\index{maximal!$\Mat(\bar{s})$}				
\index{maximal!$\cV(\bar{s})$}				

For any equivalence class $[\tilde{y}]$, we aim to find a vector $y \in
[\tilde{y}]$ so that $\partial F(y)$ contains at least one nonsingular
matrix. For $y \in \R^{2n-1}$,  the matrix $M \in \cM(y)$ is said to be \emph{maximal}, if 
\begin{equation}\label{maxs}
	(\hat{x}+A^{T}y)_{ij} \geq 0 \Longrightarrow M_{ij} =  1.
\end{equation}
For a maximal $M'$, it is easy to see that $M' \geq M$ and
thus $\cV(M') \succeq \cV(M)$ for every $M \in \cM(y)$. Therefore, if
$\partial F(y)$ contains a nonsingular matrix, then $\cV(M')$ must
be nonsingular. In this case, we also call the matrices $\cV(M') \in \partial F(y)$ \emph{maximal}.

It turns out that the generalized Jacobian at any vertex of the polyhedron $[\tilde{y}]$ contains at least one nonsingular matrix.

\begin{theorem}\label{vertices}
	Let $\tilde y\in \R^{2n-1}$ be given, $y \in [\tilde{y}]$, and let
	$M$ be maximal for $y$. The vector $y$ is a vertex of the
	polyhedron $[\tilde{y}]$ if, and only if, $M$ is connected.
\end{theorem}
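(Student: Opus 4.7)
The plan is to pull $[\tilde y]$ back to a $(K-1)$-dimensional parameter space via \Cref{equi}, recognize vertex-ness as a connectedness statement about an auxiliary graph $H$ on $\{1,\ldots,K\}$, and identify $H$-connectedness with connectedness of the bipartite graph of $M$.

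First I will verify how $U\lambda$ acts on $Y = \Mat(\hat x + A^T y)$. By \cref{shiftv}, adding $\lambda_k u^k$ to $y$ shifts $c_j \mapsto c_j - \lambda_k$ for $j \in C_k$ and $r_i \mapsto r_i + \lambda_k$ for $i \in R_k\setminus\{n\}$. Consequently each diagonal block $\tilde Y^{kk}$ is preserved, while each off-diagonal block $\tilde Y^{ij}$ with $i \neq j$ is shifted uniformly by $\lambda_i - \lambda_j$, provided the convention $\lambda_K = 0$ is used to absorb the missing $r_n$-coordinate. Under this reduction, \Cref{equi} presents $[\tilde y]$ as the polyhedron
\[
P = \bigl\{(\lambda_1,\ldots,\lambda_{K-1}) \in \R^{K-1}\ :\ \tilde Y^{ij}_{ab} + \lambda_i - \lambda_j \leq 0,\ \forall i\neq j,\ (a,b)\in R_i\times C_j\bigr\},
\]
with $\lambda_K:=0$.

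Second, I will apply the standard vertex criterion to $P$. The gradient of the constraint at position $(a,b)$ in block $(i,j)$ is $e_i - e_j$, restricted to the hyperplane $\{\lambda_K = 0\}$, so the span of the gradients of the active constraints at $\lambda^\ast$ coincides with the edge-span of the graph $H$ on vertex set $\{1,\ldots,K\}$ whose edges are those pairs $\{i,j\}$ for which at least one position $(a,b)$ in block $(i,j)$ is active. The classical rank formula for graph incidence matrices gives $\dim(\text{edge-span}) = K - c(H)$, where $c(H)$ is the number of connected components of $H$. Combined with the constraint $\lambda_K = 0$, the active-gradient span attains the required rank $K-1$ in $\R^{K-1}$ iff $c(H)=1$, i.e.\ iff $H$ is connected.

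Finally, I will identify $H$ with a ``block quotient'' of the bipartite graph of $M$. Since $M$ is maximal and $\tilde Y^{ij} \leq 0$ on off-diagonal blocks, the constraint at $(a,b)\in R_i\times C_j$ is active iff $\tilde Y^{ij}_{ab}+\lambda^\ast_i-\lambda^\ast_j=0$ iff $M_{ab}=1$; thus $\{i,j\}$ is an edge of $H$ iff the off-diagonal block $M^{ij}$ or $M^{ji}$ has a nonzero entry. Because each diagonal block $X^k$ in \cref{Xblk} is connected by construction and maximality forces $M^{kk}$ to dominate the adjacency of $X^k$ entrywise, each $M^{kk}$ is itself connected. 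Hence the bipartite graph of $M$ is connected iff $H$ is, completing the equivalence. The main obstacle will be the careful bookkeeping in the first step: the asymmetry caused by $r\in\R^{n-1}$ having no variable for row $n$ must be reconciled via the convention $\lambda_K=0$ so that the uniform shift $\lambda_i-\lambda_j$ really does describe the action of $U\lambda$ on every off-diagonal block. Once this reduction is secure, the rest is routine incidence-matrix linear algebra.
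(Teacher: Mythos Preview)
Your argument is correct and takes a genuinely different route from the paper.  The paper argues the two directions separately and directly: if $M$ is disconnected it exhibits $y\pm\epsilon u^1\in[\tilde y]$, giving a nontrivial convex combination; if $M$ is connected it supposes $y=\alpha y'+(1-\alpha)y''$ and propagates equalities block by block starting from the anchored block $K$ (using \Cref{Yblk}) until $y'=y''$.  You instead pull the whole question back through the affine parametrization of \Cref{equi} to a polyhedron $P\subset\R^{K-1}$, invoke the standard active-gradient vertex criterion, recognize the gradients $e_i-e_j$ as incidence vectors of a quotient graph $H$ on the block indices, and use the rank formula $K-c(H)$ to equate vertex-ness with $c(H)=1$; the final identification of $H$-connectedness with connectedness of the bipartite graph of $M$ goes through because each diagonal block $M^{kk}$ inherits connectedness from $X^k$.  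Your approach is more structural and handles both directions at once, making the role of the block graph transparent; the paper's is more elementary and self-contained, avoiding any appeal to general polyhedral or incidence-matrix facts.  One small notational slip: in your displayed description of $P$ you write $\tilde Y^{ij}_{ab}$ where you mean $Y^{ij}_{ab}$ (the unperturbed value at the base point $y$), since $\tilde Y$ was introduced as the perturbed matrix.
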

\begin{proof}
	Assume $M$ is disconnected. Without loss of generality, we can write
	$$M = \begin{bmatrix}
		M^{1} & 0 \\
		0 & M^{2}
	\end{bmatrix} \text{ and } Y = \begin{bmatrix}
		Y^{1} & Y^{12} \\
		Y^{21} & Y^{2}
	\end{bmatrix}.$$
	It holds that $Y^{12} < 0$ and $Y^{21} < 0$ by the maximality of $M$. Thus, there exists an $\epsilon > 0$ such that the vectors $y^\prime = y + \epsilon u^{1}$ and $y^{\prime\prime} = y - \epsilon u^{1}$ are in $[\tilde{y}]$, where the vector $u^{1}$ is defined as in \cref{shiftv}. But then $y = \frac{1}{2}y^\prime + \frac{1}{2}y^{\prime\prime}$ and thus $y$ is not a vertex.
	
	Conversely, assume that $M$ is connected. If $X$ is connected, then \Cref{unique} implies that the polyhedron $[\tilde{y}] = y$ and thus $y$ is an extreme point. Therefore, we assume that $X$ is disconnected and consider its partition as given in \cref{yrc}. Suppose for the sake of contradiction that $y \in [\tilde{y}]$ is not an extreme point. Then there exist a scalar $\alpha \in (0,1)$ and vectors $y^\prime,y^{\prime\prime} \in [\tilde{y}]$ both different from $y$ such that 				
	$y = \alpha y^\prime + (1-\alpha) y^{\prime\prime}$. Then, $Y =  \alpha Y^\prime + (1-\alpha) Y^{\prime\prime}$. Note that the diagonal blocks of $Y,Y^\prime$ and $Y^{\prime\prime}$ are the same, see \Cref{Yblk}.

	Since $M$ is connected, one of the off-diagonal blocks $M^{i,K}, M^{K,i}$ for $i=1,\ldots,K-1$ must contain a positive entry. By the maximality of $M$, we have that $M^{ij}>0$ if, and only if, $Y^{ij} \geq 0$. This implies that one of the off-diagonal blocks $Y^{i,K}, Y^{K,i}$ for $i=1,\ldots,K-1$ must contain a nonnegative entry, say the $(i,j)$-th entry $Y_{ij}^{K-1,K}$ of the $(K-1,K)$-th block. In addition, as $X$ is disconnected and $X = (Y)_{+}$, the off-diagonal blocks $Y^{i,K}, Y^{K,i}$ for $i=1,\ldots,K-1$ must be non-positive. Putting together, the entry $Y_{ij}^{K-1,K}$ must be zero.

	% as the off-diagonal blocks of $Y^\prime$ and $Y^{\prime\prime}$ are non-positive.
	Similarly, we have that  $(Y^\prime)^{K-1,K} \leq 0$ and $(Y^{\prime\prime})^{K-1,K} \leq 0$, and therefore, the equation 
	$$0 = Y_{i,j}^{K-1,K} = \alpha (Y^\prime)_{i,j}^{K-1,K} + (1-\alpha) (Y^{\prime\prime})_{i,j}^{K-1,K}$$ implies that  $(Y^\prime)_{ij}^{K-1,K} = (Y^{\prime\prime})_{ij}^{K-1,K}= 0$. Therefore, it holds that (see \cref{notations,Xrc})
	$$\begin{array}{rrrrl}
		0 &=& (Y^\prime)_{ij}^{K-1,K} &=&  \hat{X}_{ij}^{K-1,K} + (r^\prime)_{i}^{K-1} + (c^\prime)_{j}^{K}, \vspace{.2cm}\\
		0 &=& (Y^{\prime\prime})_{ij}^{K-1,K} &=&  \hat{X}_{ij}^{K-1,K} + (r^{\prime\prime})_{i}^{K-1} + (c^{\prime\prime})_{j}^{K}.
	\end{array}$$
	From \Cref{Yblk}, we know that $(\mathscr{Y}^{\prime})^{K} = (\mathscr{Y}^{\prime\prime})^{K}$ and thus $(c^\prime)_{j}^{K} = (c^{\prime\prime})_{j}^{K}$. This implies that $(r^\prime)_{i}^{K-1} = (r^{\prime\prime})_{i}^{K-1} $. It then follows from \Cref{equi} that $(\mathscr{Y}^{\prime})^{K-1} = (\mathscr{Y}^{\prime\prime})^{K-1}$.  This argument can be repeated for all the remaining diagonal blocks until we get $y = y^\prime = y^{\prime\prime}$. This yields contradiction, and thus $y$ is an extreme point.
\end{proof}

It follows from \Cref{Vsing} and \Cref{vertices} that $\partial F(y)$ contains a nonsingular matrix whenever $y$ is a vertex of the polyhedron $[\tilde{y}]$. More precisely, the maximal $\mathcal{V}(M)$ is nonsingular when $y$ is a vertex. This result is stated in the next corollary.

\begin{corollary}\label{nonsingular}
	If $y$ is a vertex of the polyhedron $[\tilde{y}]$, then $\partial F(y)$ contains at least one nonsingular matrix. In particular, the maximal matrix $V \in \partial F(y)$ is nonsingular.
\end{corollary}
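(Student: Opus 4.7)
The plan is to observe that this corollary is essentially immediate from chaining three of the results already in hand, so no new machinery is needed; the only thing to verify is that the pieces line up cleanly. First I would recall the maximal $M \in \mathcal{M}(y)$ defined by (maxs): it is the $\{0,1\}$-matrix obtained by setting $M_{ij}=1$ whenever $(\hat x + A^T y)_{ij} \ge 0$ and $M_{ij}=0$ otherwise. Correspondingly, the maximal matrix in $\partial F(y)$ is $\mathcal{V}(M) = A\,\Diag(\kvec(M))\,A^T$, as follows from Proposition \ref{Vstructure} together with the monotonicity $M' \le M \Rightarrow \mathcal{V}(M') \preceq \mathcal{V}(M)$ noted in the paragraph following (maxs).

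Next I would apply Theorem \ref{vertices}: the hypothesis that $y$ is a vertex of the polyhedron $[\tilde y]$ translates, via that theorem, directly into the combinatorial statement that the maximal $M$ is connected. Finally, since $M \ge 0$, Lemma \ref{Vsing} gives that $\mathcal{V}(M)$ is nonsingular. Because $\mathcal{V}(M) \in \partial F(y)$ by Proposition \ref{Vstructure}, the generalized Jacobian contains a nonsingular matrix, which proves the first assertion; and since this $\mathcal{V}(M)$ is precisely the maximal element of $\partial F(y)$ identified in the first paragraph, the "in particular" clause follows immediately.

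There is essentially no obstacle — the content is all in Theorem \ref{vertices} and Lemma \ref{Vsing}, and this corollary is a two-line bookkeeping consequence. The only point worth stating explicitly is the identification "maximal $M$ in $\mathcal{M}(y)$" $\leftrightarrow$ "maximal $V$ in $\partial F(y)$", which is the reason the proof automatically produces the stronger statement about the maximal element rather than just some element of the generalized Jacobian.
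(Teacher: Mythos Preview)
Your proposal is correct and matches the paper's approach exactly: the paper states (in the paragraph immediately preceding the corollary) that the result follows directly from Lemma~\ref{Vsing} and Theorem~\ref{vertices}, which is precisely the chain you describe. No separate proof is given in the paper beyond that one-sentence remark, so your write-up is, if anything, more explicit than the original.
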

%			\begin{proof}
%				 As $x_{i} \geq 0$ implies that $\bar{s}_{i} \geq 0$,  the matrix $X$ is disconnected implies that $M = \Mat(\bar{s})$ is disconnected, but not vice versa. This is a direct consequence of \Cref{vertices,Vsing}.
%			\end{proof}

The rest of this section provides a method for finding a vertex efficiently.  We start with the existence of a vertex for any polyhedron $[y]$.
\begin{lemma}\label{existy}
	Let $y \in \R^{2n-1}$. The polyhedron $[y] \subset \R^{2n-1}$ contains at least one vertex.
\end{lemma}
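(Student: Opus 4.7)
My approach is to exploit the explicit polyhedral description of $[\tilde{y}]$ from \Cref{equi} and to show that this polyhedron is bounded, which immediately forces the existence of a vertex. I would first parameterize $[\tilde{y}]$ as $y = \tilde{y} + U\lambda$ with $\lambda \in \R^K$ and $\lambda_K = 0$. Using the definition of $u^k$ in \cref{shiftv}, adding $\lambda_k u^k$ subtracts $\lambda_k$ from each $c_j$ with $j\in C_k$ and adds $\lambda_k$ to each $r_i$ with $i\in R_k$. A direct computation via \cref{Xrc} then shows that for the $(a,b)$-th off-diagonal block with $a\neq b$, every entry satisfies
\[
Y^{ab}_{k,l} \;=\; \tilde{Y}^{ab}_{k,l} + \lambda_a - \lambda_b .
\]
Hence the defining inequalities $Y^{ab}\leq 0$ collapse to the family of difference constraints $\lambda_a - \lambda_b \leq \beta_{ab}$ for all $a\neq b$, where $\beta_{ab} := \min_{k,l}(-\tilde{Y}^{ab}_{k,l}) \geq 0$; nonnegativity holds because $\tilde{y}\in[\tilde{y}]$ already forces $\tilde{Y}^{ab}\leq 0$.

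Next I would argue boundedness. Each diagonal block $X^k$ is nonempty with $m_k,n_k\geq 1$, so every off-diagonal block is nonempty and a constraint exists for every ordered pair $a\neq b$. Setting $b=K$ and using $\lambda_K=0$ yields $\lambda_a \leq \beta_{aK}$; setting $a=K$ yields $\lambda_b \geq -\beta_{Kb}$. Combining, each coordinate $\lambda_a$ lies in a bounded interval. Since the columns of $U$ have pairwise disjoint supports (the sets $C_k$ partition $\{1,\ldots,n\}$, and the sets $R_k\cap\{1,\ldots,n-1\}$ are pairwise disjoint), they are linearly independent, so the affine map $\lambda\mapsto \tilde{y}+U\lambda$ is injective and the image $[\tilde{y}]$ is bounded in $\R^{2n-1}$. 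Together with $\tilde{y}\in[\tilde{y}]$, this identifies $[\tilde{y}]$ as a nonempty bounded polytope, and every nonempty bounded polytope admits at least one extreme point.

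The only real obstacle is the index bookkeeping needed to establish the identity $Y^{ab}_{k,l} = \tilde{Y}^{ab}_{k,l} + \lambda_a - \lambda_b$ cleanly; once that relation is in hand, the boundedness step is almost immediate because the equality $\lambda_K=0$ anchors the system of difference constraints and turns them into two-sided bounds through the pairs involving index $K$. As a safety net, if one prefers to avoid proving boundedness, the same difference-constraint analysis shows that any recession direction $d\in\R^K$ with $d_K=0$ must satisfy $d_a\leq d_b$ and $d_b\leq d_a$ for all $a\neq b$, forcing $d=0$; the lineality space of $[\tilde{y}]$ is therefore trivial, which is also known to be equivalent to the existence of a vertex for a nonempty polyhedron.
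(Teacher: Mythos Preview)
Your main boundedness argument has a genuine gap. You assert that ``each diagonal block $X^k$ is nonempty with $m_k,n_k\ge 1$, so every off-diagonal block is nonempty,'' but this is not guaranteed by the hypotheses. The matrix $X=\Mat(\hat x+A^Ty)_+$ may have a zero row or a zero column; in the bipartite graph this produces an isolated vertex, i.e.\ a connected component with $m_k=0$ or $n_k=0$. For such $k$ the off-diagonal block $Y^{k,b}$ (respectively $Y^{a,k}$) is an empty matrix, and the corresponding difference constraint $\lambda_k-\lambda_b\le\beta_{kb}$ simply does not exist. In that situation one of the two inequalities you use to box in $\lambda_k$ via the pair with index $K$ is missing, and $[\tilde y]$ can indeed be unbounded. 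The paper confirms this explicitly in \Cref{polytope}: boundedness of $[y]$ requires the \emph{extra} assumption that $X$ has no zero rows or columns. A concrete two-by-two example: with $\hat X=\begin{pmatrix}-10&-10\\1&1\end{pmatrix}$ and $y=0$ one gets $X=\begin{pmatrix}0&0\\1&1\end{pmatrix}$, the isolated first row gives a block with $n_1=0$, and the resulting $[y]$ is a half-line.

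Your safety-net argument is the right idea and is essentially the route the paper takes (showing $[y]$ contains no line), but as written it suffers from the same oversight: you claim $d_a\le d_b$ and $d_b\le d_a$ \emph{for all} $a\neq b$, which again presumes every off-diagonal block is nonempty. What survives is an asymmetric collection of inequalities $d_a\le d_b$ only for those pairs $(a,b)$ with $m_a\ge1$ and $n_b\ge1$. To close the argument you must show these, together with $d_K=0$, still kill the lineality space; this needs the observation that block $K$ contains row $n$ (so $m_K\ge1$) and a short case analysis for degenerate blocks. The paper handles this by fixing the last connected block, invoking \Cref{Yblk} to freeze $\mathscr{Y}^K$, and then arguing that any nonzero direction forces an off-diagonal entry of $\tilde Y$ to become positive for some sign of the line parameter.
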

\begin{proof}
	A polyhedron contains a line if there exists a vector $y \in \R^{2n-1}$ and a non-zero direction $d\in \R^{2n-1}$ such that $y+\alpha d$ is contained in the polyhedron for all scalars $\alpha$. It is well known that a polyhedron has at least one vertex if, and only if, it does not contain a line.
	
	If $X$ is connected, then the polyhedron $[y]$ contains only one vector $y$ which is a vertex by \Cref{unique}. Suppose that $X$ is disconnected. Assume, without loss of generality, that we can write $X$ and $Y$ as
	$$X = \begin{bmatrix}
		X^{1} & 0\\
		0 & X^{2}
	\end{bmatrix} \text{ and } Y = \begin{bmatrix}
		Y^{1} & Y^{12} \\
		Y^{21} & Y^{2}
	\end{bmatrix},$$
	where the diagonal block $X^{2}$ are connected. Here, the block $X^{1}$ does not have to be connected. % (The general case for more than $2$ blocks can be established in a similar way.)
	
	Let $\tilde{y} := y + \alpha d$ for some $d$ and define
	$$D := \Mat\left(A^{T}d\right)= \begin{bmatrix}
		D^{1} & D^{12} \\
		D^{21} & D^{2}
	\end{bmatrix}.$$
	It follows from \Cref{Yblk} that the entries in $y$ and $\tilde{y}$ associated with the last connected block are the same, i.e., $\mathscr{Y}^{2}= \mathscr{\tilde{Y}}^{2}$. Thus, the entries in $d$ associated with $D^{2}$ must be zero. From this, we can see that if the direction $d$ is non-zero, then there exists at least one non-zero element in $D^{12}$ or $D^{21}$.
	
	If $y \sim \tilde{y}$, then applying \Cref{Yblk} again yields $Y_{2} = \tilde{Y}_{2}$ and this implies that
	$$\tilde{Y} = \begin{bmatrix}
		\tilde{Y}^{1} & \tilde{Y}^{12} \\
		\tilde{Y}^{21} & \tilde{Y}^{2} \\							
	\end{bmatrix} = \begin{bmatrix}
		Y^{1}+ \alpha D^{1} & Y^{12}+ \alpha D^{12} \\
		Y^{21}+ \alpha D^{21} & Y^{2} \\							
	\end{bmatrix}. $$
	In both cases, at least one of the entries in the off-diagonal blocks $\tilde{Y}^{12}$ or $\tilde{Y}^{21}$ becomes positive for sufficiently large or small $\alpha$. This shows that $\tilde{y}$ is not equivalent to $y$ for all $\alpha$. Thus $[y]$ doesn't contain a line, and it has at least one vertex.
\end{proof}

	\begin{remark}\label{polytope}
		In \Cref{existy}, if we assume additionally that $X = \Mat\left(\hat{x}+A^{T}y\right)_{+}$ does not contain any zero rows or columns, then $[y]$ is even bounded and thus a polytope.  We prove this by contradiction. Assume that $[y]$ is not bounded. Then there exists a non-zero direction $d \in \R^{2n-1}$ such that $y + \alpha d \in [y]$ for all $\alpha \geq 0$. By \Cref{equi}, we have that $d = U\lambda$ for some non-zero $\lambda \in \R^{K}$ with $\lambda_{K} = 0$. Thus, $\tilde{y}:=y + \alpha U \lambda \in [y]$ for all $\alpha \geq 0$. The $(i,j)$-th off-diagonal blocks of $Y$ and $\tilde{Y}$ satisfy $$\tilde{Y}^{i,j} = Y^{i,j} + (\lambda_{i} - \lambda_{j})J,$$ where $J$ is all-ones matrix of appropriate size. As $\lambda_{K} = 0$ and $\lambda \neq 0$, there exists an index $i \in \{1,\ldots,n-1\}$ such that $\lambda_{i} - \lambda_{K} > 0$ or $\lambda_{K} - \lambda_{i} > 0$. This implies that the blocks $\tilde{Y}^{i,K}$ or $\tilde{Y}^{K,i}$ contain a positive entry for sufficiently large $\alpha$. But then $y$ and $\tilde{y}$ are equivalent. This is a contradiction. Therefore, $[y]$ is always bounded.
		
		Finally, the problem \cref{eq:mainprob} satisfies Mangasarian-Fromovitz constraint qualification and this implies the set of dual optimal solutions is bounded. This yields an alternative derivation that the optimal set $[y^{*}]$ is bounded.
	\end{remark} %reference for boundedness of the dual optimal set, (insert reference pls)

We can find a vertex of the polyhedron $[\tilde{y}]$ as follows. In
\Cref{equi}, $[\tilde{y}]$ is expressed as the projection of a higher
dimensional polyhedron in variables $y\in \R^{2n-1}$ and $\lambda \in
\R^{K-1}$. Through the Fourier–Motzkin elimination, we can describe
the polyhedron $[\tilde{y}]$ solely using variables $y\in \R^{2n-1}$.
Then a vertex of $[\tilde{y}]$ can be obtained via solving a particular linear program. This procedure, however, is very expensive. In what follows, we provide an efficient combinatorial method for finding a vertex of $[\tilde{y}]$.

\index{$X^{\bar{R},\bar{C}}$}
\index{$X^{R,C}$}

\index{$J$, all-ones matrix}
\index{all-ones matrix, $J$}

\begin{lemma}\label{thm:shift}
	Let $y\in \R^{2n-1}$. Let $X$ be disconnected 						and
	$$X = \begin{bmatrix}
		X^{R,C} & 0\\
		0 & X^{\bar{R},\bar{C}}\\
	\end{bmatrix} \quad \text{ and } \quad Y =  \begin{bmatrix}
		Y^{R,C} & Y^{R,\bar{C}}\\
		Y^{\bar{R},C} & Y^{\bar{R},\bar{C}}\\
	\end{bmatrix},$$
	for some subsets $R,C$ as in \cref{blk_X}.
	Let $\tilde{y} := y + t u^{1}$ for some $t \in \R$, where $u^{1}$ is defined as in \cref{shiftv} for the partition above. Then	
	\begin{equation}\label{thm:shifteq}
		y \sim \tilde{y} \quad \Longleftrightarrow \quad  \max_{i,j} Y_{i,j}^{\bar{R},C} \leq t \leq  - \max_{i,j}Y_{i,j}^{R,\bar{C}}.
	\end{equation}
	
\end{lemma}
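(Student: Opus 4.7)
My plan is to reduce the lemma to a direct computation of how the matrix $\tilde{Y} = \Mat(\hat{x} + A^T\tilde{y})$ compares with $Y = \Mat(\hat{x} + A^Ty)$ when $\tilde{y} = y + t u^{1}$, and then invoke the polyhedral characterization of $[y]$ from \Cref{equi}. First I would unpack the definition of $u^{1}$ in \cref{shiftv}: adding $t u^{1}$ to $y$ subtracts $t$ from $c_j$ for each $j \in C$ and adds $t$ to $r_i$ for each $i \in R$ (with $R \subseteq \{1,\ldots,n-1\}$ since the row index $n$ belongs to the block containing the removed constraint). Then using the entrywise formula \cref{Xrc} I would compute block-by-block:
\begin{align*}
\tilde{Y}^{R,C} &= Y^{R,C}, & \tilde{Y}^{\bar{R},\bar{C}} &= Y^{\bar{R},\bar{C}}, \\
\tilde{Y}^{R,\bar{C}} &= Y^{R,\bar{C}} + tJ, & \tilde{Y}^{\bar{R},C} &= Y^{\bar{R},C} - tJ,
\end{align*}
where $J$ is an all-ones matrix of the appropriate dimensions. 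This is just bookkeeping over which of $c_j$, $r_i$ are shifted, with a slightly separate check when $i=n$ (where only the $\tilde{c}_j$ term appears), but the end result is the same because $n \in \bar{R}$.

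For the equivalence itself, I would apply \Cref{equi} to the partition into the two blocks $X^{R,C}$ and $X^{\bar{R},\bar{C}}$, i.e., take $K=2$ with $\lambda = (t, 0)^{T}$ so that $\tilde{y} - y = U\lambda$ automatically lies in the correct subspace. The characterization then states that $\tilde{y} \in [y]$ if, and only if, the off-diagonal blocks of $\tilde{Y}$ are non-positive, that is,
\[
\tilde{Y}^{R,\bar{C}} = Y^{R,\bar{C}} + tJ \leq 0 \quad \text{and} \quad \tilde{Y}^{\bar{R},C} = Y^{\bar{R},C} - tJ \leq 0.
\]

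Finally, these two matrix inequalities are equivalent entrywise to $t \leq -\max_{i,j} Y^{R,\bar{C}}_{i,j}$ and $t \geq \max_{i,j} Y^{\bar{R},C}_{i,j}$, respectively, which is exactly the double inequality in \eqref{thm:shifteq}. The reverse direction is immediate: any $t$ satisfying this sandwich makes both off-diagonal blocks of $\tilde{Y}$ non-positive, so $\tilde{X}=(\tilde{Y})_+$ has the same zero off-diagonal blocks as $X$, while the diagonal blocks coincide by the computation above, giving $\tilde{X}=X$ and hence $\tilde{y}\sim y$.

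I do not anticipate a genuine obstacle; the main point needing care is verifying the block formulas near row $n$ (which sits in $\bar{R}$ and has no associated $r$ entry), and confirming that the implicit condition $\lambda_K = 0$ in \Cref{equi} is automatically satisfied by the choice $\lambda = (t,0)^{T}$. Both issues are purely notational and dissolve once the shift is tracked coordinate-wise.
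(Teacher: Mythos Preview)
Your proposal is correct and follows essentially the same route as the paper: compute the block form of $\tilde{Y}$ (the paper gets the identical formula $\tilde{Y}^{R,\bar{C}} = Y^{R,\bar{C}} + tJ$, $\tilde{Y}^{\bar{R},C} = Y^{\bar{R},C} - tJ$ with the diagonal blocks unchanged), and then observe that $\tilde{y}\sim y$ is equivalent to the two off-diagonal blocks of $\tilde{Y}$ being non-positive, which rewrites as the sandwich inequality. The only difference is that you route the equivalence step through \Cref{equi}, whereas the paper argues it directly from $\tilde{X}=(\tilde{Y})_+$; your final paragraph in fact supplies that direct argument too, so nothing is missing.
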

\begin{proof}
	The scalar $t$ is well-defined, as $ Y^{\bar{R},C}$ and $	Y^{R,\bar{C}}$ are non-positive. Then the matrix $\tilde{Y}$ can be written as
	\begin{equation}\label{Yform}
		\tilde{Y} = \begin{bmatrix}
			Y^{R,C} & Y^{R,\bar{C}} + t J\\
			Y^{\bar{R},C} - tJ & Y^{\bar{R},\bar{C}}\\
		\end{bmatrix},
	\end{equation}
	where $J$ is the all-ones matrix of appropriate sizes.
	We see that $y\sim \tilde{y}$ if, and only if, $Y^{R,\bar{C}} + t J \leq 0$ and $Y^{\bar{R},C} - t J \leq 0$. The latter is equivalent to the inequalities in \cref{thm:shifteq}.
\end{proof}

\index{$J$, all-ones matrix}				
\index{all-ones matrix, $J$}

For any vector $y$, we can find a vertex of $[y]$ efficiently.
\begin{theorem}\label{ysing}
	For any vector $y \in \R^{2n-1}$, there is a polynomial-time algorithm for finding a vertex of the polyhedron $[y]$.
\end{theorem}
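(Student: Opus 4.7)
I will give a combinatorial algorithm that, starting from $y\in\R^{2n-1}$, performs at most $2n-1$ shifts of the type described in \Cref{thm:shift} and terminates at a vertex of $[y]$. Each shift will be computable in $O(n^{2})$ time, for an overall running time of $O(n^{3})$. Correctness will be driven by \Cref{vertices}: the algorithm halts as soon as the maximal matrix $M\in\cM(y)$ is connected.

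Each iteration will first compute $Y=\Mat(\hat{x}+A^{T}y)$ and form the maximal $M$ (i.e., $M_{ij}=1$ iff $Y_{ij}\geq 0$). It will then compute the connected components of the bipartite graph of $M$ via BFS/DFS. If there is exactly one component, $y$ is a vertex by \Cref{vertices} and the algorithm returns. Otherwise, it will pick a component $\mathscr{C}$ that does not contain row $n$ (at least one such $\mathscr{C}$ exists because there are at least two components). Let $R,C$ denote the rows and columns in $\mathscr{C}$. The off-diagonal $X$-blocks $X^{R,\bar{C}}$ and $X^{\bar{R},C}$ will be zero, since any nonzero entry would induce an $M$-edge linking $\mathscr{C}$ to its complement; hence the partition $(R,C)$ satisfies the hypothesis of \Cref{thm:shift}. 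Moreover, $Y^{R,\bar{C}}$ and $Y^{\bar{R},C}$ will be \emph{strictly} negative, again because a zero entry would create such an $M$-edge. Consequently the interval in \cref{thm:shifteq} has strictly positive length, and the algorithm performs the update
\[
y \;\longleftarrow\; y + t^{+}u^{\mathscr{C}},\qquad t^{+} \;:=\; -\max_{i,j}Y^{R,\bar{C}}_{i,j} \;>\;0,
\]
where $u^{\mathscr{C}}\in\R^{2n-1}$ is defined from $(R,C)$ via \cref{shiftv}; by \Cref{thm:shift} this keeps $y$ inside $[y]$.

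After the update, the diagonal $Y$-blocks will be unchanged (the $-1$ on columns in $\mathscr{C}$ cancels the $+1$ on rows in $\mathscr{C}$ inside the block, consistent with \Cref{Yblk}), so every intra-component $M$-edge survives. The remaining off-diagonal block $Y^{\bar{R},C}$ is shifted by $-t^{+}J\leq 0$, so no previously present off-diagonal $M$-edge is destroyed. By the choice of $t^{+}$, at least one entry of the new $Y^{R,\bar{C}}$ equals $0$, creating a new $1$ in the maximal $M$ that links $\mathscr{C}$ to another component. Thus the number of components strictly decreases; since it is initially bounded by $2n$, the algorithm terminates in $O(n)$ iterations.

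The step that must be verified most carefully is that no shift destroys an $M$-edge, so that the component count is strictly monotone. This is precisely why the positive boundary $t^{+}$ is chosen (rather than the negative one): the newly-created zero entry appears on the $(R,\bar{C})$ side, while the $(\bar{R},C)$ side only becomes more negative and hence loses no edge. Combined with the invariance of the diagonal blocks, this delivers the strict decrease of the component count, and \Cref{vertices} then identifies the terminal $y$ as the desired vertex of $[y]$.
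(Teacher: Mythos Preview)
Your approach is essentially the paper's: repeatedly shift $y$ to a boundary of the interval in \Cref{thm:shift}, thereby creating a new zero in an off-diagonal $Y$-block and hence a new edge in the maximal $M$, and stop once $M$ is connected. Your termination measure (the number of connected components of the bipartite graph of $M$) is a bit sharper than the paper's (which tracks the number of ones in $M$), yielding $O(n)$ rather than $O(n^{2})$ shifts, but the underlying mechanism is the same.

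There is one small gap. You always take the right endpoint $t^{+}=-\max_{i,j} Y^{R,\bar C}_{i,j}$, but this quantity is undefined when the chosen component $\mathscr{C}$ has no row vertices, i.e., $R=\emptyset$. This case can occur: if every row vertex lies in the component containing row $n$, then every remaining component is column-only (for instance $n=2$ with $M=\left[\begin{smallmatrix}1&0\\1&0\end{smallmatrix}\right]$, where column $2$ is an isolated component not containing row~$n$). The fix is immediate---use the left endpoint $t^{-}=\max_{i,j} Y^{\bar R,C}_{i,j}$ when $R=\emptyset$, or simply allow either endpoint as the paper does in \cref{rule}---but you should state it. With that adjustment, your monotone component-count argument goes through unchanged, since the newly created zero now appears on the $(\bar R,C)$ side while the (empty) $(R,\bar C)$ side is vacuously handled.
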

\begin{proof}
	Let $X,Y$ and the maximal matrix $M$ defined as in \cref{notations,maxs} associated with $y$. Denote by $M^{i,j}$ the $(i,j)$-th block of $M$ $(i,j=1,\ldots,K)$ corresponding to the partition of $X$ in \cref{Xblk}. If $y$ is not a vertex, then $M$ is disconnected. Thus, there exists a subset $\mathcal{B}\subseteq \{1,\ldots,K\}$ such that $K \in \mathcal{B}$,
	$$Y^{R,\bar{C}} < 0 \text{ and } Y^{\bar{R},C} < 0,$$
	where $R$ and $C$ be the collection of row and column indices of $Y$ associated with the blocks in $\mathcal{B}$. For example, if $\mathcal{B} = \{K\}$, then% $R = \{1,\ldots,\sum_{i=1}^{K-1}m_{i}\}$, $C = \{1,\ldots,\sum_{i=1}^{K-1}n_{i}\}$ and
	$$Y^{R,\bar{C}} = \begin{bmatrix}
		Y^{1,K} \\
		\vdots \\
		Y^{K-1,K}
	\end{bmatrix} < 0 \text{ and } Y^{\bar{R},C} = \begin{bmatrix}
		Y^{K,1} & \cdots & Y^{K,K-1}
	\end{bmatrix} < 0.$$
	This means we can find a constant $t \neq 0$ such that $\max Y^{\bar{R},C}\leq t \leq -\max Y^{R,\bar{C}}$ as in \cref{thm:shifteq}. Let $\tilde{y} = y + tw$, where $w\in \R^{2n-1}$ is defined as
	\begin{equation}\label{shiftw}
		\begin{array}{lrrl}
			w_{i} &=& -1 & \text{ for } i \in C, \\
			w_{n+i} &=&  1 & \text{ for } i \in R.
		\end{array}
	\end{equation}
	
	By \Cref{thm:shift}, we have $\tilde{y}\sim y$. Recall that $\tilde{Y}$ has the form \cref{Yform}. In particular, we distinguish the following two cases depending on $t$:
	\begin{equation}\label{rule}
		\begin{array}{ll}
			1. & \text{If we take $t = -\max Y^{R,\bar{C}} > 0$, then $Y^{R,\bar{C}}+t J$ contains at least one zero entry.} \\[.2cm]
			2. & \text{Similarly, if $t = \max  Y^{\bar{R},C} < 0$, then $Y^{\bar{R},C}-t J$ contains at least one zero entry.}
		\end{array}
	\end{equation}
	Let $\tilde{M}$ be the maximal matrix defined similarly for $\tilde{y}$. In either case, the number of non-zero elements in $\tilde{M}$ is strictly less than these in $M$. As $y\sim \tilde{y}$, we can repeat this procedure until $M$ is connected.
\end{proof}

\subsection{The Algorithm and its Local Convergence}

For any vector $y\in \R^{2n-1}$, we can find a vertex $\tilde{y}$ of the polyhedron $[y]$ using \Cref{ysing}. It follows from \Cref{nonsingular} that the maximal matrix $\tilde{V}\in \partial F(\tilde{y})$ is nonsingular. 
Thus we can generate well-defined iterates when maximal $\tilde{V} \in \partial F(\tilde{y})$ is used at each iteration. We achieve this by developing a variant of the Semismooth Newton method. 

%Thus if we use the maximal $V\in \partial F(y)$ in the semismooth Newton method, then each iteration is well-defined. This yields the following modified Newton method. % (The $k$-th iteration is denoted by )

\begin{algorithm}[H]
	\begin{algorithmic}[1]
		\STATE \textbf{Require:} $y^0$ initial point, $tol$ tolerance
		\WHILE{$\|F(y^k)\| > tol$}
		\STATE Find a vertex $\tilde{y}^{k}$ of $[y^{k}]$ using \Cref{ysing}%  \Cref{mo_subroutine}  
		\STATE Compute the maximal $\tilde{V}_{k} \in \partial F(\tilde{y}^{k})$
		\STATE Update $y^{k+1} = \tilde{y}^{k} - \tilde{V}_{k}^{-1}F(\tilde{y}^{k})$ \ENDWHILE
	\end{algorithmic}
	\caption{A Modified Semismooth Newton Method}
	\label{mo}
\end{algorithm}

We now prove Q-quadratic local convergence of the modified Newton method.
Recall that the distance between a vector $y \in \R^{2n-1}$ to a subset $S
\subseteq \R^{2n-1}$ is
\begin{equation}
	\label{eq:ptsetdist}
	\begin{array}{rrl}
		\textdef{$\dist(y,S)$} : = \inf_{s \in S}\|y-s\|.
	\end{array}
\end{equation}
Similarly, we denote the nearest point
distance between two subsets $S,T \subset \R^{2n-1}$ as
\begin{equation}
	\label{eq:setsetdist}
	\textdef{$\dist(S,T)$} : = \inf_{s \in S, t \in T}\|s-t\|.
\end{equation}

The main idea behind the proof
is that if an equivalence class $[y]$ is sufficiently
close to the optimal set $[y^{*}]$ in the sense of \eqref{eq:setsetdist},
then every element in $[y]$ is also close to $[y^{*}]$ in the sense of
\eqref{eq:ptsetdist}; and this further implies that
each vertex in $[y]$ is also close to one of the vertices in $[y^{*}]$. 
For any polyhedron $[y]$, we denote by $\ext[y]$ the set of vertices of $[y]$.
\index{$\ext[y]$, vertices of $[y]$}
\index{vertices of $[y]$, $\ext[y]$}

\begin{lemma}\label{thm_close0}
	Suppose that $F(y^{*}) = 0$. Then there exist $\epsilon >0$ and $\kappa
	>0$ such that for any $y \in \R^{2n-1}$ with $\dist(y,[y^{*}]) <
	\epsilon$ we have:
	\begin{enumerate}
		\item $\dist(\tilde{y},[y^{*}]) < \kappa \cdot \epsilon$ for every $\tilde{y} \in [y]$.
		\item $\dist(\tilde{y},\ext[y^{*}]) < \kappa \cdot \epsilon$ for every $\tilde{y} \in \ext [y]$.
	\end{enumerate}
\end{lemma}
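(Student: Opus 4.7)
I would exploit the polyhedral structure of equivalence classes developed in \Cref{equi,vertices} and reduce the statement to Hoffman's error bound for polyhedra. The first step is a combinatorial observation: for $\epsilon$ smaller than half the smallest strictly positive entry of $x^*$, every position with $X^*_{ij}>0$ also satisfies $X_{ij}>0$ whenever $\dist(y,[y^*])<\epsilon$. Indeed, picking $y'\in [y^*]$ with $\|y-y'\|<\epsilon$, we have $\|A^T(y-y')\|_\infty \leq \|A\|\,\epsilon$, so $(\hat x+A^Ty)_{ij}$ stays close to $(\hat x+A^T y')_{ij}=X^*_{ij}$ whenever the latter is positive. Consequently every edge of the bipartite graph of $X^*$ is an edge of the graph of $X$, so the block partition of $X$ is a coarsening of that of $X^*$; in particular the last block of $X$ contains the last block of $X^*$.

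\textbf{Item (1) via Hoffman.} From this coarsening, each column $u^k$ of the shift matrix $U$ attached to $X$ equals a sum $\sum_{j\in J_k} u^{*,j}$ of columns of the analogous matrix $U^*$ attached to $X^*$, where $J_k$ indexes the blocks of $X^*$ that merged into block $k$ of $X$. Hence $\range U\subseteq \range U^*$, and the constraint $\lambda_K=0$ defining $[y]$ is compatible with the constraint $\lambda^*_{K^*}=0$ defining $[y^*]$. For $\tilde y\in[y]$, write $\tilde y=y+U\lambda$; then $\tilde y-y'=(y-y')+U\lambda$, with $U\lambda$ lying in the tangent space of $[y^*]$. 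Applying Hoffman's lemma to the fixed defining system of $[y^*]$ from \Cref{equi} yields a constant $H$ depending only on $[y^*]$ such that $\dist(\tilde y,[y^*])$ is bounded by $H$ times the total constraint violation at $\tilde y$. The equality violations reduce to shifts by $A^T(y-y')$, which are $O(\epsilon)$. The inequality violations (non-positivity of the off-diagonal blocks with respect to $X^*$'s partition) are also $O(\epsilon)$: blocks that are off-diagonal for both partitions stay non-positive by $\tilde y\in[y]$, while blocks off-diagonal for $X^*$ but merged into a diagonal block of $X$ are within $O(\epsilon)$ of the corresponding non-positive blocks of $Y^*$.

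\textbf{Item (2).} For the vertex statement I would use \Cref{vertices} together with item (1). Each $\tilde y\in \ext[y]$ has a connected maximal matrix $M$, and the support graph of $M$ contains a spanning tree of the bipartite graph associated to $[y]$'s partition. Refining this tree along the finer partition of $X^*$ produces a connected maximal matrix $M^*$ at a corresponding point $\tilde y^*\in [y^*]$, which is therefore a vertex by \Cref{vertices}. Because there are only finitely many coarsenings of $X^*$'s partition, the combinatorial type of $[y]$ takes only finitely many values as $y$ ranges over the $\epsilon$-neighbourhood of $[y^*]$, and within each type the correspondence $\tilde y\mapsto \tilde y^*$ is piecewise affine. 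Taking the maximum of the resulting Lipschitz constants (and enlarging $\kappa$ from item (1) if needed) yields $\dist(\tilde y,\ext[y^*])<\kappa\,\epsilon$.

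\textbf{Main obstacle.} The delicate part of the proof is the vertex matching of item (2): item (1) delivers a nearby point in $[y^*]$ with relative ease, but forcing that point to be a vertex requires the spanning-tree refinement and a careful check that the correspondence $\tilde y\mapsto \tilde y^*$ is Lipschitz uniformly across the finitely many combinatorial types of $[y]$. A secondary technicality is to verify that Hoffman's constant can be taken uniformly over these partition refinements, which works because only finitely many defining systems arise.
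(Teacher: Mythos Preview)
Your Part~(1) is essentially the paper's argument: both use the observation that the block partition of $X$ coarsens that of $X^{*}$ (so $\range U\subseteq \range U^{*}$), translate $\tilde y$ by the same $U\lambda$ that carried $y$ to $\tilde y$ to obtain a point $y'$ with $\|\tilde y-y'\|<\epsilon$, and then invoke Hoffman's bound on the fixed polyhedral description of $[y^{*}]$ from \Cref{equi}. Your bookkeeping of which off-diagonal blocks can violate nonpositivity and by how much matches the paper's case split exactly.

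Part~(2), however, is where your sketch diverges and leaves a real gap. The phrase ``refining this tree along the finer partition of $X^{*}$ produces a connected maximal matrix $M^{*}$ at a corresponding point $\tilde y^{*}\in [y^{*}]$'' does not define $\tilde y^{*}$: a spanning tree of $M$ is a combinatorial object, whereas membership in $[y^{*}]$ and the notion of ``maximal $M^{*}$'' are determined by the \emph{values} $(\hat x+A^{T}\tilde y^{*})_{ij}$. You have not said how the tree selects those values, nor checked that the resulting $\tilde y^{*}$ actually lies in $[y^{*}]$, nor that the edges of your tree end up with $(\hat x+A^{T}\tilde y^{*})_{ij}\ge 0$ so that $M^{*}$ is connected. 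The subsequent claim that ``the correspondence $\tilde y\mapsto \tilde y^{*}$ is piecewise affine'' is therefore unsupported; without an explicit formula, there is no Lipschitz constant to take a maximum over. The finiteness of combinatorial types of $[y]$ is true but does not by itself produce a map to $\ext[y^{*}]$.

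The paper's proof of Part~(2) supplies exactly the missing constructive step. Starting from the nearest $y^{*}\in[y^{*}]$ to $\tilde y$ (distance $<c_{1}\epsilon$ by Part~(1)), it runs the vertex-finding procedure of \Cref{ysing} on $y^{*}$ and bounds each shift $\lambda_{k}^{*}$ individually. The crucial observation---and the idea your sketch lacks---is that connectedness of the maximal $M$ at $\tilde y$ forces, for \emph{every} bipartition $R,C$ arising in the procedure, one of $\tilde Y^{\bar R,C}$ or $\tilde Y^{R,\bar C}$ to contain a nonnegative entry; since $\|\tilde Y-Y^{*}\|<c_{1}\|A\|\epsilon$, the corresponding block of $Y^{*}$ has an entry above $-c_{1}\|A\|\epsilon$, so the shift needed to zero it out is $O(\epsilon)$. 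Summing at most $K^{*}-1$ such shifts yields a vertex $\tilde y^{*}\in\ext[y^{*}]$ with $\|\tilde y-\tilde y^{*}\|<c_{2}\epsilon$. This is precisely the quantitative control that your spanning-tree heuristic does not provide.
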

\begin{proof}
	\begin{enumerate}
		\item Let $y \in \R^{2n-1}$. Without loss of generality, we assume that $y^{*}$ satisfies $\|y - y^{*}\| = \dist(y,[y^{*}]) < \epsilon$. It follows from \Cref{pattern} that if $\epsilon > 0$ is sufficiently small, then
		\begin{equation}\label{XXstar}
			X_{ij}^* > 0 \quad 
			\Longrightarrow \quad X_{ij} > 0.
		\end{equation}
		Let $X$ and $X^{*}$ be partitioned as in \Cref{Xblk},
		\begin{equation}
			\label{eq_par}
			\begin{array}{rrl}
				X     & = & \Blkdiag\left(X^{1},\ldots,X^{K}\right),                 \\
				X^{*} & = & \Blkdiag\left((X^{*})^{1},\ldots,(X^{*})^{K^{*}}\right),
			\end{array}
		\end{equation}
		where $K$ and $K^{*}$ are the number of blocks in $X$ and $X^{*}$, respectively. It follows from \Cref{XXstar} that $K \leq K^{*}$, and  moreover, we can view each block $(X^{*})^{i,j}$  as a unique sub-block of $X^{k,l}$ for some $k,l = 1,\ldots,K$. As an example, assume we have the following partition for $X$ and $X^{*}$ into $K=2$ and $K^*=3$ blocks, respectively,
		$$X = 
		\kbordermatrix{	
			&	& & & & & \\
			& X_{1,1} & X_{1,2} &  X_{1,3} & \vrule & 0 & 0 \\
			& X_{2,1} & X_{2,2} &  X_{2,3} & \vrule & 0 & 0  \\ 
			& X_{3,1} & X_{3,2} &  X_{3,3} & \vrule & 0 & 0  \\ \cline{2-7}			
			& 0 & 0 & 0 & \vrule  &  X_{4,4} & X_{4,5} \\		
			& 0 & 0 & 0 & \vrule  & X_{5,4} &  X_{5,5} \\		
		}, X^{*} = 
		\kbordermatrix{	
			&	& & & & & \\
			& X_{1,1}^{*} & X_{1,2}^{*} & \vrule &  0 & \vrule & 0 & 0 \\
			& X_{2,1}^{*} & X_{2,2}^{*} & \vrule &  0 & \vrule & 0 & 0  \\ \cline{2-8}			
			& 0 & 0 & \vrule &  X_{3,3}^{*} & \vrule & 0 & 0  \\ \cline{2-8}			
			& 0 & 0 & \vrule & 0 & \vrule  &  X_{4,4}^{*} & X_{4,5}^{*} \\		
			& 0 & 0 & \vrule  & 0 & \vrule  & X_{5,4}^{*} &  X_{5,5}^{*} \\		
		}.$$
		Then the top-left block $(X^{*})^{1} \in \R^{2 \times 2}$ and the mid block $(X^{*})^{2} \in \R^{1}$ of $X^{*}$ on the right hand side are both sub-blocks of $X^{1} \in \R^{3 \times 3}$ of $X$ on the left hand side. 
		
		 For convenience, we define a zero-one matrix $P \in \{0,1\}^{K^{*} \times K}$ such that $P_{ij} = 1$ if, and only if, $(X^{*})^{i}$ is a sub-block of $X^{j}$. For instance, the matrix $P$ in the previous example is given by
		 $$ P = \begin{bmatrix}
		 	1 & 0 \\
		 	1 & 0 \\
		 	0 & 1
		 \end{bmatrix} \in \R^{3 \times 2}.$$
		  Denote by $p_{i} \in \R^{K}$ the $i$-th row of $P$. Note that $p_{i} = p_{j}$ if, and only if, the diagonal blocks $(X^{*})^{i}$ and $(X^{*})^{j}$ are the sub-blocks of the same diagonal block in $X$. We also define the matrices $U$ and $U^{*}$ as in \cref{shiftV} for $X$ and $X^{*}$, respectively.

		%	{\color{red} Assume $U$ contains all blocks with $\lambda_{K} = \lambda_{K^{*}}^{*} = 0$.}	
		From \Cref{equi}, we know that if $\tilde{y} \in [y]$, then $\tilde{y} = y + U \tilde{\lambda}$ for some $\tilde{\lambda} \in \R^{K}$ with $\tilde{\lambda}_{K} = 0$.	Define $y' := y^{*} + U\tilde{\lambda}$. By construction,  the distance between $\tilde{y}$ and $y'$ is small as \begin{equation}\label{eq_close1}
			\|\tilde{y} - y^\prime \| = \|y +U\tilde{\lambda} - y^{*} - U\tilde{\lambda}\| = \|y - y^{*}\|< \epsilon.
		\end{equation}	
		We will show that $\dist(y^\prime ,[y^{*}])$ is also sufficiently small. The key idea is that $y'$ only slightly violates the set of constraints defining the polyhedron $[y^{*}]$ in \Cref{equi}. From this, we can establish an upper bound for $\dist(y^\prime ,[y^{*}])$ using Hoffman's error bound \cite{hoffman}. Together with \cref{eq_close1}, the first inequality in the statement follows immediately.
		
		Let $\lambda^{*} = P \tilde{\lambda} \in \R^{K^*}$. Since the last diagonal block $(X^{*})^{K^{*}}$ of $X^{*}$ must be a sub-block of $X^{K}$ and $\tilde{\lambda}_{K} = 0$, we have the equality
		\begin{equation}\label{eq_close2}
			\lambda_{K^{*}}^{*} = 0.
		\end{equation} 
		One can easily verify that $U = U^{*}P$ and thus $U\tilde{\lambda} = U^{*}\lambda^{*}$. This means we can write \begin{equation}\label{eq_close3}
			y^\prime :=  y^{*} + U\tilde{\lambda} = y^{*} + U^{*}\lambda^{*}.
		\end{equation}
		From \Cref{equi}, the equivalence class $[y^{*}]$ can be defined as
		\begin{equation}\label{ystar2}
			[y^{*}] = \left\{ y^{*} + U^{*}\lambda \in \R^{2n-1} \;|\;  \begin{array}{cc}
				\lambda \in \R^{K^{*}}, \lambda_{K^{*}} = 0\\ 
				(Y^*)^{ij} \leq 0 \text{ for } i\neq j
			\end{array}\right\},
		\end{equation}
		where $(Y^{*})^{ij}$ is the $(i,j)$-th block of $Y^{*} = \Mat\left(\hat{x} +A^{T}y^{*}\right)$ with respect to the partition of $X^{*}$ in \cref{eq_par}. From \cref{eq_close2} and \cref{eq_close3}, it is clear that $y'$ satisfies the equality constraints in \cref{ystar2}.

		%{\color{red} define the notation for diagonal blocks clearly}
		
		As to the inequality constraints, we partition $Y^\prime$  in the same way as $X^{*}$. Each block $(Y^\prime)^{ij}$ can also be viewed as a sub-block of $X^{k,l}$ for some $k,l=1,\ldots,K$. We distinguish the following two cases based on the off-diagonal blocks in $Y^\prime$.
		\begin{enumerate}
			\item

			If $(Y^\prime)^{ij}$ $(i\neq j)$ is a sub-block of a diagonal block $X^{k}$, then the diagonal blocks $(Y^\prime)^{i}$ and $(Y^\prime)^{j}$ are also sub-blocks of $X^{k}$. As $\lambda^*=P\tilde{\lambda}$, this means $\lambda_{i}^{*} = \lambda_{j}^{*} =\tilde{\lambda}_{k}$. Since $y^\prime = y^* + U\tilde{\lambda} = U^{*}\lambda^{*}$, we obtain that $(Y')^{ij} = (Y^{*})^{ij} \leq 0$.
			
			\item  If $(Y^\prime)^{ij}$ is a sub-block of an off-diagonal block $Y^{k,l}$, then the constraint
			$(Y^\prime)^{ij} \leq 0$ may not be satisfied. As $\|\tilde{y} - y^\prime\| < \epsilon$ from \cref{eq_close1}, it holds that
			$$\|\tilde{Y}-Y^\prime\| = \|\hat{x} +A^{T}\tilde{y} - \hat{x} - A^{T}y^\prime\| = \|A^{T}(\tilde{y}-y^\prime)\|< \|A\|\epsilon.$$
			In addition, $\tilde{y} \in [y]$ implies that
$\tilde{Y}^{k,l} \leq 0$, see \Cref{equi}. This means the largest nonnegative entry in $(Y^\prime)^{i,j}$ is at most $\|A\|\epsilon$.
		\end{enumerate}
		This shows that $y^\prime$ violates the constraints in $[y^{*}]$ only up to a scalar multiplication of $\epsilon$. The Hoffman's error bound implies that $\dist(y^\prime,[y^{*}]) < c \cdot \epsilon$ for some universal constant $c$ which depends only on the matrix $A$ and the polyhedron $[y^{*}]$. 
		
		%This inequality holds for any $y' \in [y]$.
		We can establish the first inequality now. Let $v \in [y^{*}]$ such that $ \|y^\prime-v\| = \dist(y^\prime,[y^{*}]) < \epsilon$. For any $\tilde{y} \in [y]$, we have that
		$$\begin{array}{rrl}
			\dist(\tilde{y},[y^{*}]) &\leq& \|\tilde{y} - v\|  \\
			&\leq& \|\tilde{y} - y^\prime\| + \|v - y^\prime\|  \\
			&=& \|\tilde{y} - y^\prime \| + \dist(y^\prime,[y^{*}])\\
			&<& c_{1}\epsilon,
		\end{array}$$
		where $c_{1} = c + 1$.
%		Let $c_{1} = c +1$. We obtain the first inequality in the theorem.
		
		\item For any $\tilde{y} \in \ext [y]$, we know from the first part that $\dist(\tilde{y},[y^{*}]) < c_{1} \cdot \epsilon$ for some universal constant $c_{1}$. Without loss of generality, we assume that $y^{*}$ satisfies $\|\tilde{y} - y^{*}\| = \dist(\tilde{y},[y^{*}])$. If $y^{*} \in \ext [y^{*}]$, then $\dist(\tilde{y},\ext [y^{*}]) < c_{1} \cdot \epsilon$. Thus, we assume that $y^{*} \notin \ext [y^{*}]$.
		
		We transform $y^{*}$ into a vertex $\tilde{y}^{*} \in \ext [y^{*}]$ using the procedure in the proof of \Cref{ysing}. The obtained vertex $\tilde{y}^{*}$ depends on the choice in \cref{rule}. This yields a sequence $\lambda_{i}^{*}$ such that $\tilde{y}^{*} = y^{*} + \sum_{i=1}^{m}\lambda_{i}^{*}w_{i}$, where $w_{i}$ is defined as in \cref{rule} and $m$ is the number of iterations. Note that $m \leq K^{*}-1$.  In what follows, we show that it is possible to pick a sufficiently small $\lambda_{i}^{*}$ at each iteration.
		
		In the first iteration, we identify subsets $R$ and $C$ such that $(Y^{*})^{\bar{R},C}<0$ and $(Y^{*})^{R,\bar{C}}<0$. Then we choose either $\lambda_{1}^{*} = \max (Y^{*})^{\bar{R},C}$ or $\lambda_{1}^{*} =  -\max (Y^{*})^{R,\bar{C}}$. As $\|\tilde{y}- y^{*}\| = \dist(\tilde{y},[y^{*}]) < c_{1} \cdot \epsilon$, it holds that 
		\begin{equation}\label{eq_close4}
			\|\tilde{Y}-Y^{*}\| = \|A^{T}(\tilde{y}-y^{*})\|< c_{1}\|A\|\epsilon = \epsilon_{1},
		\end{equation}
		where we set  $\epsilon_{1} := c_{1}\|A\|\epsilon$.
		Therefore, we obtain that 
		\begin{equation}\label{eq_close5}
			\|\tilde{Y}^{\bar{R},C} - (Y^{*})^{\bar{R},C}\| < \epsilon_{1} \text{ and } \| \tilde{Y}^{R,\bar{C}} - (Y^{*})^{R,\bar{C}}\| < \epsilon_{1}.
		\end{equation}  Since $\tilde{y} \in \ext[y]$ is a vertex, the associated maximal matrix $\tilde{M}$ is connected by \Cref{vertices}, see the definition of maximality in \cref{maxs}. This implies that 
		\begin{equation}\label{eq_close6}
			\max \tilde{Y}^{\bar{R},C} \geq 0 \text{ or } \max \tilde{Y}^{R,\bar{C}} \geq 0,
		\end{equation}
		as otherwise $\tilde{M}$ is disconnected.
		Using \cref{eq_close5,eq_close6}, we conclude that $$\max (Y^{*})^{\bar{R},C} > -\epsilon_{1} \text{ or }  \max (Y^{*})^{R,\bar{C}} > -\epsilon_{1}.$$
		We choose $\lambda_{1}^{*}$ as follows.
		\begin{enumerate}
			\item 		If $\max (Y^{*})^{\bar{R},C} > -\epsilon_{1}$, then $\lambda_{1}^{*} = \max (Y^{*})^{\bar{R},C}$.
			\item		If $\max (Y^{*})^{R,\bar{C}}  > -\epsilon_{1}$, then $\lambda_{1}^{*} =  -\max (Y^{*})^{R,\bar{C}}$.
		\end{enumerate}
		As $\lambda_{1}^{*} < 0$, we have that $|\lambda_{1}^{*}| < \epsilon_{1}$ in both cases. In the second iteration, we apply the same procedure to $y^{*} + \lambda_{1}^{*}w_{1}$. The same argument above can be used, except that $\epsilon_{1}$ is replaced by $2\epsilon_{1}$, to show that that $|\lambda_{2}^{*}| < 2\epsilon_{1}$. Proceeding in this way, we conclude that $|\lambda_{k}^{*}| < 2^{k}\epsilon$ for $k=1,\ldots,m$.
		
		These upper bounds for $\lambda_{1}^{*},\ldots,\lambda_{m}^{*}$ imply that
		$$\begin{array}{rrl}
			\|\tilde{y} - \tilde{y}^{*}\| &=& \| (\tilde{y}-y^{*}) - \sum_{k=1}^{m}\lambda_{k}^{*}w^{k}\| \\
			&\leq&\|\tilde{y}-y^{*} \| +  \|\sum_{k=1}^{m}\lambda_{k}^{*} w^{k}\| \\
			&\leq& c_{1} \cdot \epsilon + \sum_{k=1}^{m} |\lambda_{k}^{*}| \cdot \|w^{k}\| \\
			&\leq& c_{1} \cdot \epsilon +  (\max_{k} \|w^{k}\|) (\sum_{k=1}^{m} 2^{k})\epsilon_{1}  \\
			&<& c_{2} \cdot \epsilon, %n 2^{K} 
		\end{array}$$
		for some constant $c_{2}$ depending on $m$. As $\tilde{y}^{*} \in \ext[y^{*}]$, we have that  $\dist(\tilde{y}, \ext [y^{*}]) \leq \|\tilde{y} - \tilde{y}^{*}\| < c_{2} \cdot \epsilon$. 
	\end{enumerate}
	Finally, we take $\kappa = \max \{c_{1},c_{2}\}$ and this finishes the proof.
\end{proof}

%\begin{lemma}
%	Suppose $F(y^{*}) = 0$. There exist positive constants $\epsilon$ and $\kappa$ such that if $y \in \mathbb{R}^{2n-1}$ satisfies $\|y - y^{*}\| < \epsilon$, then for any $\tilde{y} \in [y]$, we have that $\|\tilde{y} - \tilde{y}^{*}\|< \kappa \epsilon$ for some $\tilde{y}^{*} \in [y^{*}]$. {\color{red} replace by $\dist(\tilde{y},[y^{*}]) < \epsilon$?}
%\end{lemma}

%We now show that if the vector $y$ is sufficiently close to the solution set $\YY$, then the vertex of $[y]$ obtained from \Cref{ysing} is also close to the optimal set.

%\begin{lemma}\label{thm_close1}
%	Suppose $F(y^{*}) = 0$. There exist $\epsilon >0$ and $c >0$ such that for any $y \in \R^{2n-1}$, it holds that $$\dist([y],[y^{*}]) < \epsilon \quad \Longrightarrow \quad  \dist(\tilde{y},\ext[y^{*}]) < c \cdot \epsilon \quad 
%	 \forall \tilde{y} \in \ext [y].$$
%%	Assume $F(y^{*}) = 0$. There exist $\epsilon > 0$ and $c > 0$ such that for any $y \in \R^{2n-1}$, it holds that
%%	$$\dist([y],[y^{*}]) < \epsilon \quad \Longrightarrow \quad  \dist(\tilde{y},\tilde{y}^{*}) < c \cdot \epsilon.$$
%%
%%	satisfying $\|y - y^{*}\| < \epsilon$ and a vertex $\tilde{y} \in [y]$, it holds that
%%	$$
%%	\dist(y,\tilde{y}^{*}) < \kappa \dist(y,y^{*}).
%%	$$
%%	for some vertex $\tilde{y}^{*} \in [y^{*}]$.
%\end{lemma}
%\begin{proof}%The existence of a vector $y$ satisfying the condition in the statement follows \Cref{pattern}.
%	
%\end{proof}

We provide the convergence of the modified Newton method.

\begin{theorem}\label{thmmain}
	Let the current iterate $y^k$ be sufficiently close to the (compact, 
	convex) solution set $[y^*]$.
	Then the modified Newton method converges, and at a Q-quadratic rate, to a 
	point in $[y^*]$.
\end{theorem}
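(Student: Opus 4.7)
The approach is to reduce the analysis to a standard semismooth Newton step anchored at a nearby vertex of $[y^{*}]$. Three facts make this possible. First, $F(y)=A(\hat x+A^{T}y)_{+}-b$ depends on $y$ only through $(\hat x+A^{T}y)_{+}$, so $F$ is constant on each equivalence class; in particular $F(\tilde y^{k})=F(y^{k})$, and $[y^{*}]$ is the entire zero-set of $F$. Second, by \Cref{nonsingular}, at every vertex of any equivalence class the maximal matrix in the generalized Jacobian is nonsingular. Third, by \Cref{thm_close0}, whenever $\dist(y^{k},[y^{*}])<\epsilon$ there exists $\bar y^{*}\in\ext[y^{*}]$ with $\|\tilde y^{k}-\bar y^{*}\|<\kappa\,\dist(y^{k},[y^{*}])$, where $\tilde y^{k}$ is the vertex of $[y^{k}]$ computed by \Cref{ysing}. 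So the algorithm automatically positions itself near a vertex of the optimal set, which is exactly where nonsingular Jacobians are available.

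The plan proceeds in three steps. Step 1: well-definedness of the iteration. For $\epsilon$ small enough, \Cref{thm_close0} ensures $\tilde y^{k}$ exists and lies in a prescribed neighborhood of some $\bar y^{*}\in\ext[y^{*}]$; by \Cref{nonsingular} the maximal $\tilde V_{k}\in\partial F(\tilde y^{k})$ is nonsingular, so $y^{k+1}$ is well defined. Step 2: uniform nonsingularity. Since $\bar y^{*}$ is a vertex, the associated maximal matrix $\bar M\in\cM(\bar y^{*})$ is connected by \Cref{vertices}, so $\bar V:=\cV(\bar M)$ is positive definite by \Cref{Vsing}. For $\tilde y^{k}$ close to $\bar y^{*}$, the maximal $\tilde M_{k}\in\cM(\tilde y^{k})$ dominates $\bar M$ entrywise (any strictly positive coordinate of $(\hat x+A^{T}\bar y^{*})_{+}$ stays positive by continuity), hence by the monotonicity of $\cV(\cdot)$ one has $\tilde V_{k}\succeq\bar V\succ 0$ and $\|\tilde V_{k}^{-1}\|\le\|\bar V^{-1}\|$. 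Step 3: a quadratic Newton step. Using strong semismoothness of $F$ at $\bar y^{*}$ together with $F(\bar y^{*})=0$, any $V\in\partial F(\tilde y^{k})$ satisfies
\begin{equation*}
\|F(\tilde y^{k})-V(\tilde y^{k}-\bar y^{*})\|=O(\|\tilde y^{k}-\bar y^{*}\|^{2}).
\end{equation*}
Combining this with $F(\tilde y^{k})=F(y^{k})$ and $y^{k+1}=\tilde y^{k}-\tilde V_{k}^{-1}F(\tilde y^{k})$ gives
\begin{equation*}
\|y^{k+1}-\bar y^{*}\|\le \|\tilde V_{k}^{-1}\|\,\|F(\tilde y^{k})-\tilde V_{k}(\tilde y^{k}-\bar y^{*})\|\le C\|\tilde y^{k}-\bar y^{*}\|^{2}.
\end{equation*}

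Piecing everything together, $\dist(y^{k+1},[y^{*}])\le\|y^{k+1}-\bar y^{*}\|\le C\|\tilde y^{k}-\bar y^{*}\|^{2}\le C\kappa^{2}\dist(y^{k},[y^{*}])^{2}$, which is the required Q-quadratic decrease. Since $\ext[y^{*}]$ is finite, for $k$ large the nearby vertex $\bar y^{*}$ is a fixed one, and $\|y^{k+1}-\bar y^{*}\|\le C\|\tilde y^{k}-\bar y^{*}\|^{2}$ together with $\|\tilde y^{k}-\bar y^{*}\|\to 0$ shows $y^{k}$ is Cauchy and converges to a point of $[y^{*}]$. The main obstacle is Step 2: the generalized Jacobian $\partial F$ is only outer semicontinuous, and the maximal $M$ can jump as $y$ varies. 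The equivalence-class construction is exactly what neutralizes this obstacle, because restricting to vertices forces the combinatorial pattern underlying $\tilde V_{k}$ to dominate the pattern at $\bar y^{*}$, yielding the uniform lower bound $\tilde V_{k}\succeq\bar V$ that powers the standard semismooth Newton estimate in the absence of the local error bound condition.
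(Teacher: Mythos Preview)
Your overall architecture matches the paper's proof: anchor the Newton step at a vertex $\bar y^{*}\in\ext[y^{*}]$ supplied by \Cref{thm_close0}, invoke strong semismoothness at $\bar y^{*}$, and combine with a uniform bound on $\|\tilde V_{k}^{-1}\|$ to get $\dist(y^{k+1},[y^{*}])\le C\,\dist(y^{k},[y^{*}])^{2}$. Step~1 and Step~3 are fine and essentially identical to the paper.

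The gap is in Step~2. Your domination claim $\tilde M_{k}\ge \bar M$ is not justified. The maximal $\bar M\in\cM(\bar y^{*})$ has $\bar M_{ij}=1$ whenever $(\hat x+A^{T}\bar y^{*})_{ij}\ge 0$, including the entries where this quantity is \emph{exactly} zero; it is precisely those zero entries in the off-diagonal blocks that make $\bar M$ connected at a vertex (see the proof of \Cref{vertices}). Your continuity argument only guarantees that \emph{strictly} positive entries of $(\hat x+A^{T}\bar y^{*})$ remain positive at $\tilde y^{k}$, so it only yields $\tilde M_{k}\ge M_{\min}^{*}$, the \emph{minimal} matrix in $\cM(\bar y^{*})$. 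When $X^{*}$ is disconnected, $M_{\min}^{*}$ is disconnected and $\cV(M_{\min}^{*})$ is singular, so the inequality $\tilde V_{k}\succeq \cV(M_{\min}^{*})$ gives no lower bound. At the zero entries of $(\hat x+A^{T}\bar y^{*})$ the sign can flip under an arbitrarily small perturbation, so in general $\tilde M_{k}\not\ge \bar M$ and the bound $\|\tilde V_{k}^{-1}\|\le\|\bar V^{-1}\|$ fails.

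The paper closes this gap by a much coarser but correct argument: every maximal $M$ is a $\{0,1\}$-matrix, so the set of all possible maximal matrices is finite (at most $2^{n^{2}}$), hence there is a universal constant $\beta$ with $\|\cV(M)^{-1}\|\le\beta$ for every connected zero-one $M$. Since $\tilde y^{k}$ is a vertex of $[y^{k}]$, \Cref{vertices} and \Cref{Vsing} give that its maximal $\tilde M_{k}$ is connected, and therefore $\|\tilde V_{k}^{-1}\|\le\beta$ automatically, with no need to compare $\tilde M_{k}$ to $\bar M$. Replacing your Step~2 by this finiteness argument repairs the proof with no change to Steps~1 and~3.

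A smaller point: your final claim that ``for $k$ large the nearby vertex $\bar y^{*}$ is a fixed one'' is not justified; the nearest vertex in $\ext[y^{*}]$ could in principle alternate. The paper sidesteps this and only proves $\dist(y^{k+1},[y^{*}])<\dist(y^{k},[y^{*}])^{2}$, i.e., Q-quadratic convergence of the distance to the optimal set, which is what the theorem statement is really asserting.
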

\begin{proof}
	For any $y \in \R^{2n-1}$, if $M \in \mathcal{M}(y)$ is maximal in \cref{My}, then $M$ is an $n$ by $n$
	zero-one matrix, see also \cref{maxs}. This means that
	there are at most $2^{n^{2}}$ different maximal matrix in $\mathcal{M}(y)$. If $\bar{\mathcal{M}}$ is the collection of different maximal
	matrices in $\partial F(y)$, i.e.,
	$$\bar{\mathcal{M}}:=\{ M \;|\; M \in \mathcal{M}(y) \text{ is maximal}\},$$
	then $|\mathcal{V}|$ is finite. Therefore, there exists a constant $\beta$ such that $\|V^{-1}\| \leq \beta$ for every $ V \in \bar{\mathcal{M}}$.

	\index{semismooth}
	Let $K^{*}$ be the number of blocks in the unique optimal solution $X^*$. Let $0 < \eta < \min\{1 ,\frac{1}{\beta \kappa^{2}}\}$ be a fixed constant, where $\kappa$ is the constant in \Cref{thm_close0}. Since $F$ is semismooth at any optimal solution $y^{*}$, there exists $\epsilon > 0$ such that 
	\begin{equation}\label{local}
		\|F(y^{*})-F(y)- V\left(y^{*} - y\right)\| \leq \eta\|y^{*} -
		y\|^{2}, \quad \forall y \in B(y^{*},\epsilon) \text{ and } V \in
		\partial F(y),
	\end{equation}
	where \textdef{$B(y^{*},\epsilon)$} is the $\epsilon$ ball around $y^*$.
	Recall that the number of vertices of any polytope is finite, and $\bar{\mathcal{M}}$ is a finite set. Thus, for any fixed $\eta$, we can assume that the above inequality \cref{local} holds for every vertex $\tilde{y}^{*}$ of $[y^{*}]$. 
	
	Let $\tilde{y}^{k}$ be any vertex of $[y^{k}]$ obtained from \Cref{ysing} in the algorithm. Let $\tilde{y}^{*} \in \ext[y^{*}]$ be such that $ \|\tilde{y}^{k} - \tilde{y}^{*}\| = \dist(\tilde{y}^{k},\ext [y^{*}])$. It holds that
	\[
	\begin{array}{rrl}
		\dist(y^{k+1},[y^{*}]) & \leq & \|y^{k+1} - \tilde{y}^{*}\| \\
		&=& \|\tilde{y}^{k} - \tilde{y}^{*} - \tilde{V}_{k}^{-1}F(\tilde{y}^{k})\|\\
		&=& \|\tilde{V}_{k}^{-1} ( F(\tilde{y}^{*}) - F(\tilde{y}^{k}) - \tilde{V}_{k}(\tilde{y}^{*} - \tilde{y}^{k}) ) \| \\
		&\leq& \|\tilde{V}_{k}^{-1}\| \cdot \| F(\tilde{y}^{*}) - F(\tilde{y}^{k}) -
		\tilde{V}_{k}(\tilde{y}^{*} - \tilde{y}^{k})  \|  \\
		&\leq & \beta \eta \|\tilde{y}^{k} - \tilde{y}^{*}\|^{2},
	\end{array}
	\]		
	where the last inequality follows from~\cref{local}.
	If $\dist(y^{k},[y^{*}]) = \epsilon > 0$ is sufficiently small, then \Cref{thm_close0} shows that that $$\|\tilde{y}^{k} - \tilde{y}^{*}\|^{2} = \dist(\tilde{y}^{k},\ext [y^{*}])^{2} < \kappa^{2} \cdot \epsilon^{2} = \kappa^2 \cdot \dist(y^{k},[y^{*}])^{2}.$$ Thus, this yields
	$$\begin{array}{rrl}
		\dist(y^{k+1},[y^{*}]) &\leq & \beta \eta \|\tilde{y}^{k} - \tilde{y}^{*}\|^{2}  \\
		&\leq & \beta \eta  \kappa^{2} \dist(y^{k},[y^{*}])^{2} \\
		&<& \dist(y^{k},[y^{*}])^{2}.
	\end{array}$$
	
	This shows that $\dist(y^{k+1},[y^{*}]) < \dist(y^{k},[y^{*}])^{2}$, and thus the modified Newton method converges quadratically to the optimal set $[y^{*}]$. %Applying \Cref{thm_close} and the fact that $[y^{*}]$ is a bounded set, it holds that  $$\|\tilde{y}^{k} - y^{*}\| \leq \kappa \dist(\tilde{y}^{k},[y^{*}]) < \kappa^2 \dist(y^{k},[y^{*}]).$$
\end{proof}

We observe that the performance of \Cref{mo} depends on the number of blocks $K^{*}$ in the optimal solution $X^*$. In \Cref{thm_close0}, the constant $\kappa$ depends on $K^{*}$. If $K^{*}$ is large, then the condition for the quadratic convergence in \Cref{thmmain} is stricter. This suggests that an instance can be more difficult to solve if the optimal solution $X^{*}$ contains many blocks. Our numerical experiment verifies this observation, 
see \Cref{fig:blocks}.

%\begin{noteHS}
%Is this due to the second last line in the math block contains $\kappa^2$? ANS: what is math block?
%\end{noteHS}

Finally we discuss about an undesirable phenomenon called \textdef{cycling}. If $y^{k} = y^{k'}$ for some $k < k'$, then we say the algorithm is cycling. Thus, the algorithm may
loop indefinitely. Fortunately, if $y^{k}$ is sufficiently close to $[y^{*}]$ as required in \Cref{thmmain}, then cycling cannot happen as $\dist([y^{k+1}],[y^{*}]) < \dist([y^{k}],[y^{*}])^{2}$. 	In the general case, we can avoid cycling empirically by taking a random choice in the step \cref{rule} in \Cref{ysing}. This
generates a random vertex each time. With this simple trick, we never
end up in a cycle in our numerical experiments. Therefore we focus on
the case when cycling does not occur. (It is worth mentioning that this 
cycling is similar to the simplex method cycling for degenerate
problems, i.e.,~when the simplex algorithm remains stuck at the same feasible
vertex. However, unlike the simplex method, the total number of vertices in our problem is not finite.)

%We note that the Modified Semismooth Newton algorithm may
%	encounter an undesirable phenomenon, \textdef{cycling}. For example, the
%	algorithm generates a vertex $\tilde{y}^{k}$ of $[y^{k}]$, and then it
%	computes the update $y^{k+1} = \tilde{y}^{k} - \tilde{V}_{k}^{-1}F(\tilde{y}^{k})$. But
%	it can happen that $y^{k+1} = y^{k}$, and we stay in the
%	same equivalence class resulting in no progress. 

\index{cycling}

\section{Refinement and the Local Error Bound Condition}				
In this section we show that we can split the problem into smaller problems when the iterate $y$ in the (modified) Newton method is sufficiently close to the solution $y^{*}$ of \cref{eq:mainprob}. Under the strict complementarity assumption, 
we can split the problem recursively until the assumption in \Cref{main_conv} holds; we obtain the solutions for each subproblem  by the semismooth Newton method \cref{semiNM}.

Recall that if $y^{*}$ is a solution to the system \cref{nm_eq}, then $x^{*} = (\hat{x}+A^{T}y^{*})_{+}$ is an optimal solution to \cref{eq:mainprob} and $z^{*} = (\hat{x}+A^{T}y^{*})_{-}$ is an optimal dual variable for \cref{eq:mainprob}, see  \Cref{thm:optcondG}. We say that \textdef{strict complementarity} holds at $(x^{*},z^{*})$, if $x^{*} + z^{*} > 0$.

%			Let $(x^{*},y^{*})$ be an optimal primal-dual pair. Let $z^{*} = (\hat{x}+A^{T}y^{*})_{+} - (\hat{x}+A^{T}y^{*}) \geq 0$. Assume  {(\color{red} refer to the previous section.)}

\begin{lemma}\label{pattern}
	Suppose $F(y^{*}) = 0$. There exists an $\epsilon >0$ such that for every $y$ satisfying $||y-y^{*}|| < \epsilon$, it holds that
	\begin{equation}\label{pos1}
		x_{i}^{*} > 0 \Longrightarrow x_{i} > 0,
	\end{equation}
	where $x = (\hat{x}+A^{T}y)_{+}$. Moreover, if  $(x^{*},z^{*})$ satisfies strict complementarity, then we can also take $\epsilon$ such that $$x_{i}^{*} > 0 \Longleftarrow x_{i} > 0.$$
	
	%Let $\epsilon$ be a positive number such that $\epsilon < \min_{i=1}^{n^2} |\hat{x}_{i}+A_{i}^{T}y^{*}|$. Let $y \in \N_{\epsilon}(y^{*})$ be a vector in the neighbourhood around $y^{*}$, and take 
\end{lemma}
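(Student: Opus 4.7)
The statement is essentially a stability result for the sign pattern of $(\hat{x}+A^Ty)_+$ under small perturbations of $y$, and the argument is a direct continuity/strict-inequality estimate. Let me sketch the plan.

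I would write $v^* := \hat{x}+A^Ty^*$ and $v := \hat{x}+A^Ty$, so that $x^* = (v^*)_+$ and $x = (v)_+$. The key observation is the linear bound
\[
\|v - v^*\|_\infty \;\le\; \|v-v^*\|_2 \;\le\; \|A^T\|\,\|y-y^*\|,
\]
which lets me convert closeness of $y$ to $y^*$ into coordinate-wise closeness of $v$ to $v^*$.

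For the first implication \eqref{pos1}, I would define
\[
\delta_1 \;:=\; \min\{\,v_i^* : x_i^* > 0\,\} \;>\; 0,
\]
where the minimum is over a finite set and is strictly positive because $x_i^* > 0$ forces $v_i^* = x_i^* > 0$ (and the statement is vacuous if no such $i$ exists). Choosing $\epsilon_1 := \delta_1/(2\|A^T\|)$ (or any constant with the same effect), for any $y$ with $\|y-y^*\|<\epsilon_1$ and any $i$ with $x_i^* > 0$ I get $v_i \ge v_i^* - \|v-v^*\|_\infty > \delta_1/2 > 0$, so $x_i = (v_i)_+ > 0$.

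For the converse direction under strict complementarity, note that $z^* = (v^*)_-$, so $z_i^* > 0$ forces $v_i^* < 0$. By strict complementarity, $x_i^* = 0$ implies $z_i^* > 0$ and hence $v_i^* < 0$. Set
\[
\delta_2 \;:=\; \min\{\,-v_i^* : z_i^* > 0\,\} \;>\; 0,
\]
again a minimum over a finite set. For $\|y-y^*\| < \epsilon_2 := \delta_2/(2\|A^T\|)$ and any index $i$ with $x_i^* = 0$, I get $v_i \le v_i^* + \|v-v^*\|_\infty < -\delta_2/2 < 0$, so $x_i = 0$. The contrapositive yields $x_i > 0 \Rightarrow x_i^* > 0$. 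Taking $\epsilon := \min\{\epsilon_1,\epsilon_2\}$ proves both implications simultaneously.

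The proof carries no real obstacle; it is a standard sign-stability argument combined with the hypothesis that strict complementarity excludes the borderline case $v_i^* = 0$. The only mild care needed is to handle trivial edge cases (empty index sets) and to observe that the finite minima defining $\delta_1,\delta_2$ are indeed strictly positive, which is immediate since each is a minimum of a finite collection of strictly positive numbers.
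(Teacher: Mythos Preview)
Your proposal is correct and follows essentially the same continuity argument as the paper: if $x_i^* > 0$ then $\hat{x}_i + A_i^T y^* > 0$, hence $\hat{x}_i + A_i^T y > 0$ for $y$ close to $y^*$; under strict complementarity, $x_i^* = 0$ forces $\hat{x}_i + A_i^T y^* < 0$, whence $x_i = 0$ nearby. Your version is simply more explicit than the paper's in quantifying a uniform $\epsilon$ via the finite minima $\delta_1,\delta_2$ and the operator norm $\|A^T\|$, but the underlying idea is identical.
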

\begin{proof}
	Let $A_i$ denote the $i$-th column of $A$. 
	If $x_{i}^{*} > 0$, then $x_{i}^{*} = \hat{x}_{i}+A_{i}^{T}y^{*} > 0$
	and thus $x_{i} = (\hat{x}_{i}+A_{i}^{T}y)_{+} = \hat{x}_{i}+A_{i}^{T}y
	> 0$ for small $\epsilon > 0$.	Now suppose that the pair $(x^*,z^*)$
	satisfies strict complementarity. Assume to the contrary that $x_{i}^{*} = 0$. Then we have $z_{i}^{*} > 0$, i.e., $\hat{x}_{i}+A_{i}^{T}y^{*} < 0$, and thus  $\hat{x}_{i}+A_{i}^{T}y < 0$ for sufficiently small $\epsilon$. It follows that $x_{i} = (\hat{x}_{i}+A_{i}^{T}y)_{+} = 0$.
	%				$z_{i}^{*} = 0$ as $z^{*} = (\hat{x}+A^{T}y^{*})_{+} - (\hat{x}+A^{T}y^{*}) \geq 0$. Thus
	%				$$\hat{x}_{i}+A_{i}^{T}y^{*} =   (\hat{x}_{i}+A_{i}^{T}y^{*})_{+}  = x_{i}^{*}> 0.$$
\end{proof}

%Assume that the vector $y$ in an iteration of the modified Newton method is sufficiently close to the solution $y^{*}$.
Suppose $y$ is close to $y^{*}$. \Cref{pattern} suggests that the $X = \Mat (\hat{x}+A^{T}y)_{+}$ and the optimal solution $X^{*}=\Mat (\hat{x}+A^{T}y^*)_{+}$ share the same block-diagonal structure. 
As a heuristic, we can split the problem into smaller subproblems if the residual is sufficiently small. If strict complementarity holds, then the smaller subproblems will not be disconnected eventually and thus the semismooth Newton method \cref{semiNM} can be applied.

The local error bound condition is a sufficient condition for the convergence of Newton-type methods. It is a weaker requirement than the nonsingularity (i.e., connectedness) condition used in \Cref{sec:semi}. In this section, we show that the system \cref{nm_eq} for the nearest doubly stochastic matrix problem does not satisfy the local error bound condition.

%{\color{red}
%\begin{lemma}
%Let $C \subset \R^{2n-1}$ be a convex and compact polyhedron. For any $y \in \R^{2n-1}$, define the projection $P(y):= \argmin_{u\in C} \|u - y\|$. For any vector $d \in \R^{2n-1}$, there exists a vector $y$ such that $d = y - P(y)$.
%\end{lemma}
%\begin{proof}
%Recall that 
%
%$$d = y - P(y) \in -(C - y^{*})^{+}$$
%
%	Note that $[y^{*}]$ is a convex and compact set and the projection onto $[y^{*}]$ is a continuous function. For any $d^{i}$, there exists a vector $y^{i}$ such that $d^{i} = y^{i} - P(y^{i})$.
%
%
%\end{proof}
%}

\index{$[y^*]$, solution set}
\index{solution set, $[y^*]$}

\begin{definition}[{\textdef{local error bound}}]
	\label{def:localerrorbnd}
	Let $[y^*]$ be the solution set of \cref{nm_eq} and let $N$ be a
	neighbourhood such that $[y^*] \cap N \neq \emptyset$.
	If there exists a positive constant $c$ such that 
	\begin{equation}\label{localerror}
		c \cdot \dist(y,[y^*]) \leq ||F(y)||, \quad \forall 
		y \in N,
	\end{equation}
	then we say that $F$ satisfies the \emph{local error bound condition on
		$N$} for the system \cref{nm_eq}.% Here $\text{dist}(y,[y^{*}]) := \min_{\bar{y} \in [y^{*}]}\|\bar{y}-y\|$ is the distance from $y$ to $[y^{*}]$.
\end{definition}

We show that the local error condition does not hold for \cref{nm_eq}, and this implies that $\partial F(y)$ is singular in general. Recall that strict complementarity holds for \cref{nm_eq} if $x +z >0$ for optimal primal and dual variables $x$ and $z$.

\begin{theorem}\label{leb}
	Consider the system \cref{nm_eq}.
	Assume that strict complementarity holds. Then $F(y)$ in \cref{nm_eq}
	does not satisfy the local error bound condition.
\end{theorem}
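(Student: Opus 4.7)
The plan is a proof by contradiction. Suppose there exist a neighborhood $N$ of some $y^*\in[y^*]$ and a constant $c>0$ with $c\cdot\dist(y,[y^*])\le\|F(y)\|$ for all $y\in N$; I aim to build a sequence $\{y^k\}\subset N$ with $\dist(y^k,[y^*])\to 0$ but $\|F(y^k)\|/\dist(y^k,[y^*])\to 0$.

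The first step is to extract the structural consequences of strict complementarity. Fix $y^*\in[y^*]$ at which strict complementarity holds, so every entry of $\hat x+A^Ty^*$ is nonzero. By \Cref{pattern}, the support $I$ of $(\hat x+A^Ty)_+$ is constant on a small ball $B\ni y^*$, so $F$ is Fr\'echet differentiable on $B$ with $F(y)=V(y-y^*)$ and $V=A_IA_I^T$; in particular $\partial F(y^*)=\{V\}$ is a singleton, cf.\ \Cref{sys_sing}(2). Combining \Cref{equi} and \Cref{yV}, the polytope $[y^*]$ coincides with $y^*+\ker V$ on $B$. In the nontrivial regime that motivates the paper, $X^*$ is disconnected, so \Cref{Vsing} forces $V$ to be singular with $\ker V$ of positive dimension.

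Within $B$ the inequality $\|V(y-y^*)\|\ge \sigma_{\min}^+(V)\,\dist(y,[y^*])$ is automatic, so any violating sequence must leave $B$ through points where the sign pattern of $\hat x+A^Ty$ has changed. I would approach a boundary point $\tilde y^*\in\partial[y^*]$ at which exactly one inequality in the description of \Cref{equi} is tight, equivalently where exactly one entry of $\hat x+A^T\tilde y^*$ is zero, and cross this face via the two-scale perturbation
\[
y^k=\tilde y^*+\varepsilon_k u+\varepsilon_k^2 w,
\]
with $u\in\ker V$ pointing strictly outward. Crossing activates a single new index $j$ whose positive value is $\Theta(\varepsilon_k)$, producing a first-order term $\varepsilon_k(A_j^Tu)A_j$ in $F(y^k)$, while $\dist(y^k,[y^*])=\Theta(\varepsilon_k)$. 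Selecting $w$ to solve $Vw+(A_j^Tu)A_j=0$ cancels this first-order term, reducing $\|F(y^k)\|$ to $O(\varepsilon_k^2)$ and driving the ratio to zero.

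The principal obstacle is showing that such a $w$ actually exists, i.e., that $(A_j^Tu)A_j\in\range V$ for a suitably chosen outward $u\in\ker V$. This range-inclusion is nontrivial precisely because $V$ is singular, and is where the block-disconnected structure of $X^*$ together with the explicit form of the shift directions $U$ in \cref{shiftV} must be exploited: different outward choices of $u$ activate different indices $j$, so the positive-dimensional kernel provides the flexibility needed to align $A_j$ with $\range V$. If no single outward ray at a regular boundary point suffices, I would approach a vertex of $[y^*]$ where several faces meet, fold the activating indices $j_1,\ldots,j_\ell$ into a combined cancellation equation, and carry the $O(\varepsilon_k^2)$ corrections from both $w$ and from the nonlinearity of $(\cdot)_+$ through to close the contradiction.
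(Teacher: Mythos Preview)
Your two-scale ansatz contains a scaling error that breaks the cancellation. With $y^k=\tilde y^*+\varepsilon_k u+\varepsilon_k^2 w$ and $u\in\ker V$, expanding $F$ on the region with support $I\cup\{j\}$ gives
\[
F(y^k)=\varepsilon_k\,(A_j^Tu)\,A_j+\varepsilon_k^2\bigl[Vw+(A_j^Tw)A_j\bigr],
\]
so the $w$-contribution is $\varepsilon_k^2 Vw$, one order below the term you want to kill. Solving $Vw+(A_j^Tu)A_j=0$ therefore produces $-\varepsilon_k^2(A_j^Tu)A_j$, which does \emph{not} cancel $\varepsilon_k(A_j^Tu)A_j$. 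If you repair this by going to a single scale, $y^k=\tilde y^*+\varepsilon_k(u+w)$, the first-order map on the outside region is $V'=V+A_jA_j^T$ and you need $u+w\in\ker V'$. But by \Cref{Vsing}, adding the edge $j$ that defines a facet of $[y^*]$ connects the bipartite graph, so $V'$ is nonsingular and the only solution is $u+w=0$. Your vertex fallback runs into the same wall: by \Cref{vertices} the maximal $M$ at a vertex is connected, hence the outside Jacobian there is again nonsingular. In short, once you leave $B$ the piecewise-linear structure gives you a \emph{nonsingular} piece, and $\|F\|$ is again comparable to the distance.

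The paper's argument is entirely different and never attempts a boundary-crossing construction. It fixes $\|d^i\|$ small, lets $d^i$ approach a null vector of $V=A\Diag(s^*)A^T$, uses completeness of the normal fan of the polytope $[y^*]$ to realise each $d^i$ as a projection residual $d^i=y^i-P_{[y^*]}(y^i)$, and then asserts $\|F(y^i)\|/\dist(y^i,[y^*])=\|Vd^i\|/\|d^i\|\to 0$. Your correct observation that the error bound actually \emph{holds} on the interior ball $B$ is exactly the tension in that last step: once $d^i$ nears $\ker V$ the foot $P_{[y^*]}(y^i)$ is forced to a boundary face of $[y^*]$, where strict complementarity fails and the active set at $y^i$ acquires the very extra index $j$ you were trying to handle, so the identity $F(y^i)=Vd^i$ needs justification that the paper does not supply.
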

\begin{proof}
	Let $y\in \R^{2n-1}$. Define the projection 
	\begin{equation}\label{leb_P}
		P_{C}(y):= \argmin_{u\in C} \|u - y\|, \text{ where $C$ is a polyhedron}.
	\end{equation}
	Let $y^{*} = P$ and $d = y - y^{*}$. Note that $\text{dist}(y,[y^{*}]) = ||y-y^{*}|| = ||d||$. 
	
	Let $x = (\hat{x} + A^Ty)_{+}$ and $x^{*} = (\hat{x} + A^Ty^{*})_{+}$. Define $s^{*} \in \{0,1\}^{n^2}$ such that $s_{i}^{*} = 1$ if, and only if, $x_{i}^{*} > 0$. Applying \cref{pos1} in \Cref{pattern}, we can assume $\|d\|$ is sufficiently small so that $x_{i} >0$ if, and only if, $x_{i}^{*} >0$. Therefore, it holds that
	$$\begin{array}{rrl}
		(\hat{x} + A^Ty)_+ &=& (\hat{x} + A^Ty^{*} +A^{T}d)_+\\
		&=& x^{*} + \Diag(s^{*})A^{T}d
	\end{array}$$
	Thus we have
	$$\begin{array}{rrl}
		||F(y)|| &=& ||A(\hat{x} + A^Ty)_+ - b|| \\
		&=& ||Ax^{*} + A\Diag(s)A^{T}d - b|| \\
		&=& ||A\Diag(s)A^{T}d||.
	\end{array}$$
	If $X^{*}$ is disconnected, then $A\Diag(s)A^{T} \in \partial F(y^{*})$ is singular. Let $\epsilon > 0$. Let $\{y^{i}\}$ be a sequence in $\R^{2n-1}$ such that $d^{i} = y^{i} - y^{*}$ with ,  and 
	
	Let $\{d^{i}\}$ be a sequence in $\R^{2n-1}$ such that
$\|d^{i}\| = \|d\|$. Assume that the sequence $\{d^{i}\}$ converges to a
vector in the null space of $A\Diag(s)A^{T}$. The normal fan of
$[y^{*}]$ is complete, see Definition 7.1 and Example 7.3 in
\cite{zie95forqap}. 	It follows from the classical Rockafellar-Pshenichnyi optimality condition for the minimization problem  \cref{leb_P} that there always exists a vector  $y^{i} \in \R^{2n-1}$ such that $d^{i} = y^{i} - P_{[y^{*}]}(y^{i})$ for each $d^{i}$. This yields
	$$\frac{||F(y^{i})||}{ \text{dist}(y^{i},[y^{*}])} = \frac{||A\Diag(s)A^{T}d^{i}||}{||d||} \rightarrow 0.$$
	This shows that there exists no positive constant $c$ such that \cref{localerror} holds, and thus the local error bound condition fails.
\end{proof}
\index{normal fan}

\section{Numerical Experiments}

In this section, we present numerical tests for
the modified Newton algorithm. The main purpose is to illustrate 
empirically the
correctness of our proposed algorithm. 
(Further extensive testing of these semismooth
methods are given in \cite{QiSun:06,bai2007computing}.)

First, we compare
\Cref{mo} with the standard interior point method (IPM), and the
alternating direction method of multipliers (ADMM).
For the interior point
method, the problem is modelled in CVX \cite{cvx} and then solved using
MOSEK solver \cite{aps2019mosek}. For ADMM, we transform \cref{eq:mainprob} 
to the equivalent problem $\min \left\{\frac 12 \|x-\hat x\|^2 \,:\,
x=y, Ay=b, \, x\geq 0\right\}$. The coupling constraint $x=y$ induces a
standard splitting in the polyhedral cone variable $x$ and the linear equality
variable $y$. For more details about the implementation of ADMM applied to the least square problems, we refer to \cite{he2011solving}.

%Lastly, we used the subgradient method \cite[Section 3]{Camerini1975}	applied to the dual of \cref{eq:mainprob}\footnote{We used parameter $\gamma=1$ and the constant step length $0.02$.}. 
%			and the instances are solved by MOSEK and modeled in CVX. We also c

%			In \cref{QiSun:06}, they have applied the semismooth Newton method to a compact formulation \cref{thm:optcondG} for the nearest correlation matrix problem. It outperforms BFGS method

%			 The effectiveness of the proposed algorithm 

For the numerical experiments, we generate the data $\hat{X}$ from the
standard normal distribution. % with density of nonzeros $1/n$
Throughout \Cref{table1,table2}, $n$ refers to the size of $\hat{X} \in \Rnn$;
\emph{iteration} refers to the number of iterations; 
\emph{opt.cond.} refers to the sum of the norms of the optimality 
conditions
in~\cref{eq:optcondxyz_copy} at termination, i.e.,~primal and dual feasibility
and complementary slackness;
and \emph{time} refers to the total running time in seconds. 

\Cref{table1} displays the numerical results for one instance of
sizes $n=100,\ldots,500$, respectively. We compare the three methods. 
It is clear that the modified semismooth Newton method has a superior running 
time to ADMM and IPM. It also does better with respect to the optimality
conditions.
The tolerance for the optimality conditions for IPM is 
from the default obtained from MOSEK. As expected,
interior point methods have difficulty obtaining more than square root of machine
epsilon accuracy.
The accuracy for ADMM methods take significantly longer if more accuracy 
is requested.
In addition from~\cref{eq:optcondplusminus}, we see that both dual feasibility
and complementary slackness hold exactly for the NM algorithm, and the optimality
conditions error is totally from the primal feasibility residual
$\|Ax-b\|$.
\begin{table}[H]
	\centering{
		\begin{tabular}{|c|ccc|ccc|ccc|}
			\hline
			& \multicolumn{3}{c|}{The modified NM \Cref{mo}} &   \multicolumn{3}{c|}{IPM}    &   \multicolumn{3}{c|}{ADMM}   \\ \hline
			$n$  & iteration & opt. cond. &         time          & iteration & opt. cond. & time & iteration & opt. cond. & time \\ \hline
			%100 &     9     &  1.5e-14   &         0.08          &    20     &  1.1e-?2   & 1.1  &    996    &  9.9e-11   & 0.31 \\ %200  & 11 & 2.9e-14 & 0.1 & 21 & 8.0e-?2 &   1.4 & 1988 & 1.0e-10 &   1.7
			%200 &    11     &  2.9e-14   &          0.1          &    21     &  8.0e-?2   & 1.4  &   1988    &  1.0e-10   & 1.7  \\ %300  & 13 & 5.6e-13 & 0.27 & 30 & 4.8e-?2 &     3 & 2840 & 1.0e-10 &   4.5
			%300 &    13     &  5.6e-13   &         0.27          &    30     &  4.8e-?2   &  3   &   2840    &  1.0e-10   & 4.5  \\ %400  & 26 & 9.9e-11 & 1.1 & 22 & 2.8e-?1 &   4.6 & 3909 & 1.0e-10 &    17
			%400 &    26     &  9.9e-11   &          1.1          &    22     &  2.8e-?1   & 4.6  &   3909    &  1.0e-10   &  17  \\ %500  & 12 & 1.7e-13 & 0.96 & 27 & 2.8e-?2 &   8.2 & 4442 & 1.0e-10 &    32
			%500 &    12     &  1.7e-13   &         0.96          &    27     &  2.8e-?2   & 8.2  &   4442    &  1.0e-10   &  32  \\
%			100  &     7     &  1.4e-16   &         0.025         &    21     &  1.5e-09   & 0.92 &    985    &  9.9e-13   & 0.23 \\
%			200  &    11     &  8.1e-17   &         0.099         &    27     &  7.3e-06   & 1.7  &   1826    &  9.9e-13   & 1.4  \\
%			300  &    16     &  2.0e-16   &         0.31          &    28     &  1.4e-04   & 3.4  &   2811    &  1.0e-12   & 4.3  \\
%			400  &    13     &  2.6e-15   &         0.56          &    21     &  3.9e-07   & 4.2  &   3402    &  1.0e-12   &  15  \\
%			500  &    14     &  2.2e-16   &          0.9          &    26     &  3.4e-06   & 7.7  &   4554    &  1.0e-12   &  30  \\ \hline
			100  & 9 & 1.2e-14 & 0.1 & 25 & 4.0e-10 &  0.51 & 941 & 9.9e-13 &  0.21 \\
			200  & 13 & 1.8e-14 & 0.1 & 26 & 1.4e-06 &   1.5 & 1735 & 9.9e-13 &   1.3 \\
			300  & 12 & 7.5e-15 & 0.18 & 22 & 6.8e-07 &   2.2 & 2746 & 1.0e-12 &   4.3 \\
			400  & 12 & 7.8e-15 & 0.33 & 22 & 1.3e-05 &   4.3 & 3834 & 1.0e-12 &    17 \\
			500  & 13 & 5.3e-15 & 0.55 & 25 & 4.9e-07 &   8.1 & 4634 & 1.0e-12 &    30 \\ \hline
		\end{tabular}
	}
	\caption{Small instances}
	\label{table1}
\end{table}

In \Cref{table2} below we present the numerical results of larger instances,
$n=1000,1500,2000$. We did not include results for IPM or ADMM as they took
significantly longer.
\begin{table}[H]
	\centering{
		\begin{tabular}{|c|ccc|} \hline
			& \multicolumn{3}{c|}{The modified NM \Cref{mo}} \\ \hline
			$n$ &  iteration & opt. cond. & time \\\hline
1000  & 11 & 1.4e-15 & 0.47  \\
2000  & 11 & 1.1e-15 & 1.6  \\
3000  & 12 & 6.8e-16 & 3.9  \\
4000  & 13 & 3.6e-16 & 7.6  \\
5000  & 13 & 4.3e-16 & 12  \\
%			1000  & 14 & 3.5e-16 & 8.7   \\
%			1500  & 18 & 2.8e-15 & 43   \\
%			2000  & 18 & 7.4e-16 & 90   \\
			\hline 
			%1000  & 16 & 2.39e-13 &  10.331 \\ 
			%1500  & 50 & 8.26e-11 &  123.88 \\ 
			%2000  & 43 & 8.28e-11 &  180.67 \\ \hline
		\end{tabular}
	}
	\caption{Medium and large instances}
	\label{table2}
\end{table}
%Even with the large size of the data, our algorithm solves the problem within a reasonable amount of time. [DISCUSS THE POTENTIAL FOR QAP --- the largest we solve is n = 100]

Next, we compare \Cref{mo} with the semismooth Newton-CG algorithm (SSNCG1) presented in \cite{li2020efficient}. Roughly speaking, SSNCG1 avoids the singularity of the Jacobian matrix by adding a scaled identity matrix $\epsilon I$ at each iteration for some $\epsilon >0$. The scalar $\epsilon$ is determined by the residual at the current iteration and a number of parameters. The SSNCG1 also involves a line-search to determine its step length. We also note that the problem formulation in \cite{li2020efficient} does not remove the redundant constraint as \cref{eq:mainprob}. 

\index{SSNCG}

%We implement the SSNCG1 algorithm in \cite{li2020efficient} for our formulation \cref{eq:mainprob} with proper adjustment to make a fair comparison, e.g., we always take a full newton-step at each iteration. The obtained algorithm is called SSNCG.
%\index{SSNCG1}

We generate test instances whose optimal solution $X^{*}$ has many blocks. As discussed in the paragraph after \Cref{thmmain}, we expect that an instance is difficult to solve if there are many blocks in $X^{*}$. This is substantiated in the numerical results in \Cref{fig:blocks}. 
	
In addition, we observe that \Cref{mo} consistently takes less iterations than SSNCG1. This may be explained by our fast quadratic convergence. However, the running time of \Cref{mo} is longer than SSNCG1 due to the costly vertex finding step in \Cref{mo}. More specifically, the update in \Cref{Yform} takes a significant amount of the running time. The update \Cref{Yform} can be computed much more efficiently if the algorithm is implemented in C.

% original SSNCG1 algorithm in \cite{li2020efficient}, our modified Newton's method also takes less iteration. The running time is not compared between \Cref{mo} and SSNCG1, as the latter has some sophisticated tricks in its implementation.

\begin{figure}[H]
	\centering
	\includegraphics[scale=.6]{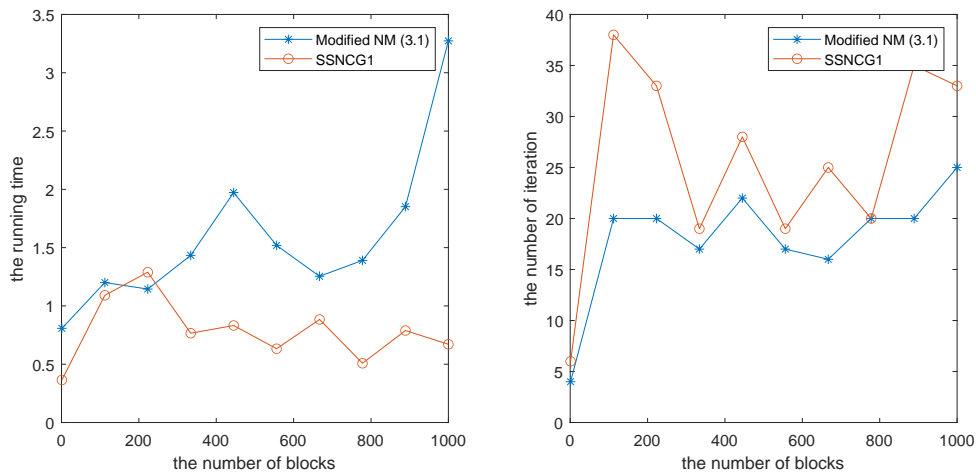}
%	\vspace{-2cm}
	\caption{Problem instances of size $n=1000$, but with different number of blocks in the optimal solution $X^{*}$}
	\label{fig:blocks}
\end{figure}

\section{Conclusion}
The nearest doubly stochastic matrix problem is formulated as a system of
strongly semismooth equations. We show that this system does not satisfy the 
so-called local error bound condition, and therefore, the quadratic convergence 
of a Newton-type method may not be guaranteed. We exploit the problem structure 
to construct a modified Newton method that converges to the solution at
a quadratic rate. The novelty of the proposed algorithm is that the search space is partitioned into equivalence classes to overcome degeneracy.	
This partitioning strategy can be extended to more general problems.
This is also the first known Newton-type method which enjoys quadratic
convergence in the absence of the local error bound condition.

\section{Acknowledgement}
We would like to thank Xudong Li, Defeng Sun and Kim-Chuan Toh for sharing their codes in \cite{li2020efficient} with us.

\newpage
\printindex
\addcontentsline{toc}{section}{Index}
\label{ind:index}

\bibliographystyle{siam}
\bibliography{.master,.psd,.qap,.haoh,xinxin,.haesol}

\def\cprime{$'$} \def\cprime{$'$} \def\cprime{$'$}
  \def\udot#1{\ifmmode\oalign{$#1$\crcr\hidewidth.\hidewidth
  }\else\oalign{#1\crcr\hidewidth.\hidewidth}\fi} \def\cprime{$'$}
  \def\cprime{$'$}
\begin{thebibliography}{10}

\bibitem{homwolkA:04}
{\sc S.~Al-Homidan and H.~Wolkowicz}, {\em Approximate and exact completion
  problems for {E}uclidean distance matrices using semidefinite programming},
  Linear Algebra Appl., 406 (2005), pp.~109--141.

\bibitem{aps2019mosek}
{\sc M.~ApS}, {\em Mosek optimization toolbox for {MATLAB}}, User's Guide and
  Reference Manual, version, 4 (2019).

\bibitem{bai2007computing}
{\sc Z.~Bai, D.~Chu, and R.~Tan}, {\em Computing the nearest doubly stochastic
  matrix with a prescribed entry}, SIAM Journal on Scientific Computing, 29
  (2007), pp.~635--655.

\bibitem{BT97}
{\sc D.~Bertsimas and J.~Tsitsiklis}, {\em Introduction to Linear
  Optimization}, Athena Scientific, Belmont, MA, 1997.

\bibitem{MR3642293}
{\sc M.~Best}, {\em Quadratic programming with computer programs}, Advances in
  Applied Mathematics, CRC Press, Boca Raton, FL, 2017.

\bibitem{MR1117772}
{\sc N.~Boland, C.~Goh, and A.~Mees}, {\em An algorithm for solving quadratic
  network flow problems}, Appl. Math. Lett., 4 (1991), pp.~61--64.

\bibitem{BoLe1:92}
{\sc J.~Borwein and A.~Lewis}, {\em Partially finite convex programming, part
  {I}, duality theory}, Math. Program., 57 (1992), pp.~15--48.

\bibitem{BW80}
{\sc J.~Borwein and H.~Wolkowicz}, {\em Facial reduction for a cone-convex
  programming problem}, J. Austral. Math. Soc. Ser. A, 30 (1980/81),
  pp.~369--380.

\bibitem{BoWo:86}
\leavevmode\vrule height 2pt depth -1.6pt width 23pt, {\em A simple constraint
  qualification in infinite-dimensional programming}, Math. Programming, 35
  (1986), pp.~83--96.

\bibitem{BoydParikhChuPeleatoEckstein:11}
{\sc S.~Boyd, N.~Parikh, E.~Chu, B.~Peleato, and J.~Eckstein}, {\em Distributed
  optimization and statistical learning via the alternating direction method of
  multipliers}, Found. Trends Machine Learning, 3 (2011), pp.~1--122.

\bibitem{brualdi1988some}
{\sc R.~Brualdi}, {\em Some applications of doubly stochastic matrices}, Linear
  Algebra and its Applications, 107 (1988), pp.~77--100.

\bibitem{BrualRys:91}
{\sc R.~Brualdi and H.~Ryser}, {\em Combinatorial Matrix Theory}, Encyclopedia
  of Mathematics and its Applications, Cambridge University Press, 1991.

\bibitem{chen2003analysis}
{\sc X.~Chen, H.~Qi, and P.~Tseng}, {\em Analysis of nonsmooth
  symmetric-matrix-valued functions with applications to semidefinite
  complementarity problems}, SIAM Journal on Optimization, 13 (2003),
  pp.~960--985.

\bibitem{clarke1990optimization}
{\sc F.~Clarke}, {\em Optimization and nonsmooth analysis}, vol.~5 of Classics
  in Applied Mathematics, Society for Industrial and Applied Mathematics
  (SIAM), Philadelphia, PA, second~ed., 1990.

\bibitem{MR1490579}
{\sc W.~Cook, W.~Cunningham, W.~Pulleyblank, and A.~Schrijver}, {\em
  Combinatorial optimization}, Wiley-Interscience Series in Discrete
  Mathematics and Optimization, John Wiley \& Sons Inc., New York, 1998.
\newblock A Wiley-Interscience Publication.

\bibitem{MR3396730}
{\sc R.~Cottle, J.-S. Pang, and R.~Stone}, {\em The linear complementarity
  problem}, vol.~60 of Classics in Applied Mathematics, Society for Industrial
  and Applied Mathematics (SIAM), Philadelphia, PA, 2009.
\newblock Corrected reprint of the 1992 original [ MR1150683].

\bibitem{MR2312332}
{\sc D.~Cvetkovi\'{c}, P.~Rowlinson, and S.~K. Simi\'{c}}, {\em Signless
  {L}aplacians of finite graphs}, Linear Algebra Appl., 423 (2007),
  pp.~155--171.

\bibitem{dan2002convergence}
{\sc H.~Dan, N.~Yamashita, and M.~Fukushima}, {\em Convergence properties of
  the inexact levenberg-marquardt method under local error bound conditions},
  Optimization methods and software, 17 (2002), pp.~605--626.

\bibitem{MR95d:90001}
{\sc D.~den Hertog}, {\em Interior point approach to linear, quadratic and
  convex programming}, vol.~277 of Mathematics and its Applications, Kluwer
  Academic Publishers Group, Dordrecht, 1994.
\newblock Algorithms and complexity.

\bibitem{deutsch1997dual}
{\sc F.~Deutsch, W.~Li, and J.~D. Ward}, {\em A dual approach to constrained
  interpolation from a convex subset of hilbert space}, Journal of
  Approximation Theory, 90 (1997).

\bibitem{fan2005quadratic}
{\sc J.~Fan and Y.~Yuan}, {\em On the quadratic convergence of the
  {L}evenberg-{M}arquardt method without nonsingularity assumption}, Computing,
  74 (2005), pp.~23--39.

\bibitem{fernandez2017newton}
{\sc J.~Fern{\'a}ndez and M.~Ver{\'o}n}, {\em Newton's method: an updated
  approach of {K}antorovich's theory}, Birkh{\"a}user, 2017.

\bibitem{MR0489903}
{\sc J.~Gauvin}, {\em A necessary and sufficient regularity condition to have
  bounded multipliers in nonconvex programming}, Math. Programming, 12 (1977),
  pp.~136--138.

\bibitem{MR1183260}
{\sc C.~Goh, N.~Boland, and A.~Mees}, {\em An algorithm for solving quadratic
  cost network flow optimization problems}, in Optimization, {V}ol. 1, 2
  ({S}ingapore, 1992), World Sci. Publ., River Edge, NJ, 1992, pp.~284--293.

\bibitem{GHILW:20}
{\sc N.~Graham, H.~Hu, H.~Im, X.~Li, and H.~Wolkowicz}, {\em A restricted dual
  {P}eaceman-{R}achford splitting method for {QAP}}, tech. rep., University of
  Waterloo, Waterloo, Ontario, 2020.
\newblock 29 pages, submitted, research report.

\bibitem{cvx}
{\sc M.~Grant, S.~Boyd, and Y.~Ye}, {\em Disciplined convex programming}, in
  Global optimization, vol.~84 of Nonconvex Optim. Appl., Springer, New York,
  2006, pp.~155--210.

\bibitem{he2011solving}
{\sc B.~He, M.~Xu, and X.~Yuan}, {\em Solving large-scale least squares
  semidefinite programming by alternating direction methods}, SIAM Journal on
  Matrix Analysis and Applications, 32 (2011), pp.~136--152.

\bibitem{HESSERT2021112451}
{\sc R.~Hessert and S.~Mallik}, {\em Moore-{P}enrose inverses of the signless
  {L}aplacian and edge-{L}aplacian of graphs}, Discrete Mathematics, 344
  (2021), p.~112451.

\bibitem{hoffman}
{\sc A.~Hoffman}, {\em On approximate solutions of systems of linear
  inequalities}, J. of Research of the National Bureau of Standards, 49 (1952),
  pp.~263--265.

\bibitem{hu2020solving}
{\sc H.~Hu and R.~Sotirov}, {\em On solving the quadratic shortest path
  problem}, INFORMS Journal on Computing, 32 (2020), pp.~219--233.

\bibitem{HSW:19}
{\sc H.~Hu, R.~Sotirov, and H.~Wolkowicz}, {\em Facial reduction for symmetry
  reduced semidefinite programs}, 2019.
\newblock last revision Oct. 2020; under review for publication.

\bibitem{HUESO200977}
{\sc J.~Hueso, E.~Martinez, and J.~Torregrosa}, {\em Modified {N}ewton's method
  for systems of nonlinear equations with singular jacobian}, Journal of
  Computational and Applied Mathematics, 224 (2009), pp.~77--83.

\bibitem{doi:10.1080/10556789308805545}
{\sc C.~Kelley and Z.~Xue}, {\em Inexact newton methods for singular problems},
  Optimization Methods and Software, 2 (1993), pp.~249--267.

\bibitem{MR1349828}
{\sc C.~Lawson and R.~Hanson}, {\em Solving least squares problems}, vol.~15 of
  Classics in Applied Mathematics, Society for Industrial and Applied
  Mathematics (SIAM), Philadelphia, PA, 1995.
\newblock Revised reprint of the 1974 original.

\bibitem{levenberg1944method}
{\sc K.~Levenberg}, {\em A method for the solution of certain non-linear
  problems in least squares}, Quarterly of applied mathematics, 2 (1944),
  pp.~164--168.

\bibitem{li2020efficient}
{\sc X.~Li, D.~Sun, and K.~Toh}, {\em On the efficient computation of a
  generalized jacobian of the projector over the birkhoff polytope},
  Mathematical Programming, 179 (2020), pp.~419--446.

\bibitem{louck1997doubly}
{\sc J.~D. Louck}, {\em Doubly stochastic matrices in quantum mechanics},
  Foundations of Physics, 27 (1997), pp.~1085--1104.

\bibitem{MR34:7263}
{\sc O.~L. Mangasarian and S.~Fromovitz}, {\em The {F}ritz {J}ohn necessary
  optimality conditions in the presence of equality and inequality
  constraints}, J. Math. Anal. Appl., 17 (1967), pp.~37--47.

\bibitem{marcus1960some}
{\sc M.~Marcus}, {\em Some properties and applications of doubly stochastic
  matrices}, The American Mathematical Monthly, 67 (1960), pp.~215--221.

\bibitem{marquardt1963algorithm}
{\sc D.~Marquardt}, {\em An algorithm for least-squares estimation of nonlinear
  inequalities}, SIAM J. Appl. Math. v11,  (1963), pp.~431--441.

\bibitem{MiSmSwWa:85}
{\sc C.~Micchelli, P.~Smith, J.~Swetits, and J.~Ward}, {\em Constrained $l_p$
  approximation}, Journal of Constructive Approximation, 1 (1985), pp.~93--102.

\bibitem{mordukhovich2006variational}
{\sc B.~S. Mordukhovich}, {\em Variational analysis and generalized
  differentiation I: Basic theory}, vol.~330, Springer Science \& Business
  Media, 2006.

\bibitem{OliveiraWolkXu:15}
{\sc D.~Oliveira, H.~Wolkowicz, and Y.~Xu}, {\em {ADMM} for the {SDP}
  relaxation of the {QAP}}, Math. Program. Comput., 10 (2018), pp.~631--658.

\bibitem{QiSun:06}
{\sc H.~Qi and D.~Sun}, {\em A quadratically convergent {N}ewton method for
  computing the nearest correlation matrix}, SIAM J. Matrix Anal. Appl., 28
  (2006), pp.~360--385.

\bibitem{qi1993nonsmooth}
{\sc L.~Qi and J.~Sun}, {\em A nonsmooth version of {N}ewton's method},
  Mathematical programming, 58 (1993), pp.~353--367.

\bibitem{Rademacher}
{\sc H.~Rademacher}, {\em Uber partielle und totale differenzierbarkeit i.},
  Math. Ann., 89 (1919), pp.~340--359.

\bibitem{smw2}
{\sc P.~Smith and H.~Wolkowicz}, {\em A nonlinear equation for linear
  programming}, Math. Programming, 34 (1986), pp.~235--238.

\bibitem{sun2002semismooth}
{\sc D.~Sun and J.~Sun}, {\em Semismooth matrix-valued functions}, Mathematics
  of Operations Research, 27 (2002), pp.~150--169.

\bibitem{TAM20101734}
{\sc B.~Tam and S.~Wu}, {\em On the reduced signless {L}aplacian spectrum of a
  degree maximal graph}, Linear Algebra and its Applications, 432 (2010),
  pp.~1734--1756.

\bibitem{yamashita2001rate}
{\sc N.~Yamashita and M.~Fukushima}, {\em On the rate of convergence of the
  {L}evenberg-{M}arquardt method}, in Topics in numerical analysis, Springer,
  2001, pp.~239--249.

\bibitem{zie95forqap}
{\sc G.~Ziegler}, {\em Lectures on polytopes}, Springer-Verlag, New York, 1995.

\end{thebibliography}
%\bibliography{..\mytexmf\.master,..\mytexmf\.psd,..\mytexmf\.qap,..\mytexmf\.haoh,..\mytexmf\xinxin}
%\bibliography{../../../mytexmf/.master,../../../mytexmf/.psd,../../../mytexmf/.qap,../../../mytexmf/.publis,../../../mytexmf/.GI,../../../mytexmf/.haoh} 	
\addcontentsline{toc}{section}{Bibliography}

\end{document}